\documentclass[a4paper]{amsart}

\usepackage[utf8]{inputenc}
\usepackage[english]{babel}

\usepackage[T1]{fontenc}
\usepackage{lmodern}  

\usepackage{my-math-symbol}
\usepackage{my-cleveref}
\usepackage{my-equation-numbering}
\usepackage{comment}
\usepackage{tikz}
\usetikzlibrary{cd}
\usepackage[colorinlistoftodos]{todonotes}

\newcommand{\GJMS}{P}
\newcommand{\dilation}{\delta}
\newcommand{\ltrans}{l}
\newcommand{\inversion}{\iota}

\newcommand{\Hsymbol}{S_{H}}
\newcommand{\Hpsido}{\Psi_{H}}
\newcommand{\HSobolev}{W_{H}}

\usepackage[non-sorted-cites,initials,alphabetic,nobysame]{amsrefs}

\AtBeginDocument{%
	\def\MR#1{}
}

\title{Lower bounds for the critical CR GJMS operator}
\author{Yuya Takeuchi}
\address{Division of Mathematics \\ Institute of Pure and Applied Sciences \\ University of Tsukuba
	\\ 1- 1- 1 Tennodai, Tsukuba, Ibaraki 305-8571 Japan}
\email{ytakeuchi@math.tsukuba.ac.jp, yuya.takeuchi.math@gmail.com}

\subjclass[2020]{32V20, 58J50, 58J40}

\keywords{critical CR GJMS operator, CR Paneitz operator, Heisenberg calculus, embeddability, Rossi sphere}

\thanks{This work was supported by JSPS KAKENHI Grant Number JP25K17247.}

\begin{document}

\begin{abstract}
	We establish lower bounds for the critical CR GJMS operator
	on closed pseudo-Hermitian manifolds.
	In dimension $2 n + 1$,
	assuming embeddability,
	we obtain a lower bound involving the Heisenberg Sobolev norm of order $n + 1$,
	modulo an $L^{2}$ term,
	on the orthogonal complement of the kernel.
	In dimension three,
	we prove that the CR Paneitz operator is bounded below without assuming embeddability.
	The proofs combine the Heisenberg calculus
	with a square-root construction based on the joint functional calculus
	of the sub-Laplacian and the Reeb vector field on the Heisenberg group.
	As an application, we show that the infinitely many negative eigenvalues
	of the CR Paneitz operator on the Rossi sphere have zero as their only accumulation point.
	Consequently, this operator does not have closed range.
\end{abstract}

\maketitle

\section{Introduction}
\label{section:introduction}

Invariant differential operators play a central role in conformal and CR geometry.
Their analytic properties are closely related to geometric problems
such as the Yamabe problem and the constant $Q$-curvature problem.

The GJMS operators,
introduced by Graham, Jenne, Mason, and Sparling~\cite{Graham-Jenne-Mason-Sparling1992},
are conformally covariant differential operators
whose leading terms are powers of the Laplacian.
On a Riemannian manifold of dimension $d \geq 3$,
their construction gives operators of order $2 k$
for every positive integer $k$ when $d$ is odd,
and for $1 \leq k \leq d / 2$ when $d$ is even.
These operators are formally self-adjoint~\cite{Graham-Zworski2003}.
With the convention that the Laplacian is non-negative,
their principal symbols are positive away from the zero section.
Standard elliptic theory therefore implies that,
on a closed Riemannian manifold,
their self-adjoint realizations are bounded below and have compact resolvent.
These properties remain valid at the critical order $2 k = d$ in even dimensions.

Gover and Graham~\cite{Gover-Graham2005} constructed CR analogues of these operators
using the Fefferman metric.
On a strictly pseudoconvex CR manifold of dimension $2 n + 1$,
a choice of contact form $\theta$ determines formally self-adjoint CR GJMS operators $P_{k}$,
$1 \leq k \leq n + 1$, of Heisenberg order $2 k$.
For the subcritical orders $1 \leq k \leq n$,
these operators have invertible Heisenberg principal symbols and are subelliptic%
~\cite{Ponge2008-Book}*{Proposition 3.5.7}.
On closed manifolds,
their self-adjoint realizations have compact resolvent and are bounded below;
the latter property is established in \cite{Ponge2008-Book}*{Proposition 5.4.12}.
Thus both the conformal and subcritical CR GJMS operators have finite-dimensional kernels
and only finitely many negative eigenvalues,
counted with multiplicity.

The \emph{critical CR GJMS operator} $\GJMS \coloneqq P_{n + 1}$ is analytically different.
Its Heisenberg principal symbol is no longer invertible,
and it annihilates CR pluriharmonic functions.
On a closed embeddable CR manifold,
these functions already form an infinite-dimensional space,
so the kernel of $\GJMS$ is infinite-dimensional;
see \cite{Takeuchi2023-GJMS}*{Section 2}.
The lower-bound arguments for the conformal and subcritical CR cases
therefore do not apply directly to $\GJMS$,
and its spectral analysis must take this degeneracy into account.

The purpose of this paper is to establish lower bounds for the critical CR GJMS operator,
including cases in which non-negativity fails.
In dimension three,
this operator is also known as the \emph{CR Paneitz operator}.
The sign of $\GJMS$ is sensitive to the underlying CR geometry.
On a closed embeddable three-dimensional CR manifold,
the CR Paneitz operator is non-negative \cite{Takeuchi2020-Paneitz}.
In higher dimensions,
however,
the critical CR GJMS operator can have negative eigenvalues
(\cref{rem:negative-eigenvalue-in-higher-dimension}).
Negative eigenvalues also occur in dimension three
when embeddability is not assumed.
In particular,
the CR Paneitz operator on the Rossi sphere has infinitely many negative eigenvalues~\cite{Takeuchi2024-preprint}.
These examples motivate the question of whether the spectrum is nevertheless bounded below.

We use the volume form $\theta \wedge (d \theta)^{n}$ to define $L^{2}(M)$
and regard the critical CR GJMS operator $\GJMS$ as an unbounded operator on $L^{2}(M)$ with domain
\begin{equation}
	\Dom \GJMS
	= \Set{u \in L^{2}(M) | \GJMS u \in L^{2}(M)}.
\end{equation}
Here $\GJMS u$ is understood in the distributional sense.
We write $\iproduct{\cdot}{\cdot}_{0}$ and $\norm{\cdot}_{0}$
for the $L^{2}$ inner product and norm.
For $s \in \bbR$,
let $\HSobolev^{s}(M)$ be the Heisenberg Sobolev space with norm
\begin{equation}
	\norm{u}_{s}
	\coloneqq \norm{(\Delta_{b} + 1)^{s / 2} u}_{0},
\end{equation}
where $\Delta_b$ is the sub-Laplacian;
see \cref{section:Heisenberg-calculus-and-lower-bound-criterion}.

In the embeddable case,
the author proved that $\GJMS$ is self-adjoint and has closed range~\cite{Takeuchi2023-GJMS}.
Our first main result provides a lower bound that controls the Heisenberg Sobolev norm
on the orthogonal complement of the kernel,
up to an $L^{2}$ term.

\begin{theorem}
\label{thm:Garding-type-inequality-for-critical-CR-GJMS-operator}
	Let $(M, T^{1, 0} M, \theta)$ be a closed pseudo-Hermitian manifold of dimension $2 n + 1$.
	Assume that $(M, T^{1, 0} M)$ is embeddable.
	Then there exist constants $\varepsilon, C > 0$
	such that
	\begin{equation}
		\iproduct{\GJMS u}{u}_{0}
		\geq \varepsilon \norm{u}_{n + 1}^{2} - C \norm{u}_{0}^{2}
	\end{equation}
	holds for every $u \in \Dom \GJMS \cap (\Ker \GJMS)^{\perp}$.
	In particular,
	the spectrum of $\GJMS$ is bounded below.
\end{theorem}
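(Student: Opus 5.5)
The plan is to combine the Heisenberg principal symbol structure of $\GJMS$ with a square-root construction. On the Heisenberg group, the model operator of $\GJMS$ is a function of the sub-Laplacian $\Delta_{b}$ and the Reeb field $T$ through the joint functional calculus:
\begin{equation}
	\prod_{j=0}^{n} \bigl(\Delta_{b} + i(n - 2j) T\bigr).
\end{equation}
This operator is nonnegative, and its kernel lies in the range of the Szeg\H{o}-type projections onto the boundary values of (anti)holomorphic functions, that is, onto CR pluriharmonic functions.

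\textbf{Step 1 (symbol factorization).} Using the joint functional calculus on the Heisenberg group, I write the model symbol as $q^{\ast} q$. Here $q$ has Heisenberg order $n + 1$ and is built as the square root of the nonnegative joint spectral function above. I then check that $q$ lies in the Heisenberg symbol class $\Hsymbol^{n+1}$. Freezing coefficients via the Heisenberg calculus gives $Q \in \Hpsido^{n+1}(M)$ with
\begin{equation}
	\GJMS = Q^{\ast} Q + R, \qquad R \in \Hpsido^{2n+1}(M).
\end{equation}
The remainder $R$ has order one lower than $\GJMS$ in the Heisenberg sense, so $|\iproduct{R u}{u}_{0}| \leq C \norm{u}_{n + 1/2}^{2}$.

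\textbf{Step 2 (coercivity of $Q$ off the degenerate directions).} The symbol of $Q$ fails to be invertible only where the model kernel lives, namely the $\pm$ extremal Hermite modes corresponding to $\Pi_{\pm}$. I therefore construct projections $S_{\pm} \in \Hpsido^{0}(M)$ (Szeg\H{o} and conjugate Szeg\H{o}) such that $Q + S_{+} + S_{-}$, or rather $Q^{\ast}Q + (\Delta_{b}+1)^{(n+1)/2}(S_{+} + S_{-})(\Delta_{b}+1)^{(n+1)/2}$, has invertible symbol. Standard Heisenberg calculus then yields
\begin{equation}
	\norm{u}_{n+1}^{2} \leq C\bigl(\norm{Q u}_{0}^{2} + \norm{S u}_{n+1}^{2} + \norm{u}_{0}^{2}\bigr), \qquad S = S_{+} + S_{-}.
\end{equation}
Embeddability is used here: it guarantees that the Szeg\H{o} projection exists with closed-range $\bar\partial_{b}$ (Kohn). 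It also guarantees that $\Ker \GJMS$ contains the CR pluriharmonic functions, which differ from the ranges of $S_{\pm}$ by a finite-dimensional, smoothing error.

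\textbf{Step 3 (using $u \perp \Ker \GJMS$).} For $u \in (\Ker \GJMS)^{\perp}$, $u$ is orthogonal to all CR pluriharmonic functions, in particular to boundary values of holomorphic and antiholomorphic functions up to finite rank. Hence $S u$ equals a smoothing operator applied to $u$, so $\norm{S u}_{n+1} \leq C \norm{u}_{0}$. Combining this with Step 1 gives
\begin{equation}
	\iproduct{\GJMS u}{u}_{0} \geq \norm{Qu}_{0}^{2} - C\norm{u}_{n+1/2}^{2} \geq c\norm{u}_{n+1}^{2} - C'\norm{u}_{n+1/2}^{2} - C''\norm{u}_{0}^{2}.
\end{equation}
Interpolation, $\norm{u}_{n+1/2}^{2} \leq \eta \norm{u}_{n+1}^{2} + C_{\eta}\norm{u}_{0}^{2}$, absorbs the middle term. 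The computation is justified first for smooth $u$ and then extended to $\Dom \GJMS$ by density, using self-adjointness and closed range from \cite{Takeuchi2023-GJMS}. Since $\GJMS \geq 0$ on $\Ker \GJMS$ trivially and $\GJMS$ preserves the orthogonal decomposition, the spectrum is bounded below by $-C$.

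The main obstacle is Step 1/2: showing that the square root of the joint spectral function defines a genuine Heisenberg symbol. This requires smoothness of $q$ away from the origin, which fails near the degenerate modes where the function vanishes. I would first try regularizing, by taking the square root of $p + \chi$, where $\chi$ is the symbol of the projections onto the extremal modes, and then correcting by terms supported on those modes.
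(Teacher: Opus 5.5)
Your outline is viable, but it runs in the opposite order from the paper.

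\textbf{How the paper argues.} It modifies the operator first, setting $\widetilde{\GJMS} = \GJMS + \Pi(\Delta_{b}+1)^{n+1}\Pi$, where $\Pi$ is the orthogonal projection onto $\Ker\GJMS$. It proves that $\widetilde{\GJMS}$ has a parametrix, with approximate inverse $G + \Pi(\Delta_{b}+1)^{-n-1}\Pi$, built from the partial inverse $G \in \Hpsido^{-2n-2}(M)$ and $\Pi \sim S + \overline{S}$. It then takes the square root of the \emph{strictly positive} model multiplier of $\widetilde{\GJMS}^{0}$. A single abstract lemma then finishes: if $a = b \ast b$ with $b$ real and $a$ invertible, then $b$ is invertible, so $\norm{Bu}_{0}^{2}$ controls $\norm{u}_{m/2}^{2}$. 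Finally, $\widetilde{\GJMS}u = \GJMS u$ whenever $\Pi u = 0$.

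\textbf{How your route compares.} You factor the degenerate symbol of $\GJMS$ itself and prove coercivity of $Q$ modulo the Szeg\H{o} directions afterwards. Your augmented operator $Q^{\ast}Q + (\Delta_{b}+1)^{(n+1)/2}(S_{+}+S_{-})(\Delta_{b}+1)^{(n+1)/2}$ has exactly the principal symbol of the paper's $\widetilde{\GJMS}$. So the invertibility you simply assert in Step 2 is precisely the paper's parametrix lemma, and it needs that argument. Turning the parametrix into your $\norm{u}_{n+1}$ bound also needs one more step. For example, if $E$ is a parametrix of $B^{\ast}B$ with $B = (Q,\, S(\Delta_{b}+1)^{(n+1)/2})$, then $EB^{\ast}$ is a left parametrix of $B$. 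Thus your route uses all of the paper's ingredients plus an extra coercivity step. In exchange you get a factorization $\GJMS = Q^{\ast}Q + R$ of the unmodified operator, and an estimate on all of $\HSobolev^{n+1}(M)$ with an explicit Szeg\H{o} error term. There the hypothesis $u \perp \Ker\GJMS$ enters only through $Su = \overline{S}u = 0$.

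\textbf{The obstacle you flag is misdiagnosed.} Your proposed remedy (take the square root of $p$ plus the projection term, then correct) is exactly the paper's construction, but no remedy is needed. A joint-calculus multiplier matters only on the joint spectrum. On the unit circle, the degenerate rays $\mu = n\abs{\lambda}$ are isolated from the rest of the spectrum. That rest lies in the cone $\mu \geq (n+2)\abs{\lambda}$, where the polynomial multiplier is strictly positive. Hence $\sqrt{p}$ restricted to the spectrum extends to a smooth homogeneous function on $\bbR^{2}\setminus\{0\}$ that vanishes near the degenerate rays. Concretely, your $q$ is $q^{0} \ast (1 - \sigma_{0}^{0}(\Pi^{0}))$, with $q^{0}$ the paper's symbol.

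\textbf{What does need proof.} The real work is showing that a smooth homogeneous multiplier gives a convolution kernel that is homogeneous of degree $-3n-3$ and smooth away from the origin. The paper obtains this from the Schwartz multiplier theorem for the joint functional calculus of $\Delta_{b}$ and $-\sqrt{-1}T$. It applies the theorem to dyadic pieces $\kappa(\xi)\chi(2^{-2j}\xi)$ and sums them using homogeneity.

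\textbf{Smaller points.}
\begin{itemize}
\item Globalizing $q$ requires its $U(n)$-invariance, which holds by $U(n)$-equivariance of the calculus; ``freezing coefficients'' alone does not give a well-defined global symbol.
\item The regularity $u \in \HSobolev^{2n+2}(M)$ for $u \in \Dom\GJMS \cap (\Ker\GJMS)^{\perp}$ comes from $u = G\GJMS u$ with $G \in \Hpsido^{-2n-2}(M)$, not from self-adjointness and closed range alone.
\item Embeddability enters through these facts ($S$, $G$, $\Pi$ being Heisenberg operators). It is not needed for CR pluriharmonic functions to lie in $\Ker\GJMS$; that holds in general.
\end{itemize}
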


The estimate also implies that the negative spectral subspace of $\GJMS$ is finite-dimensional;
see \cref{cor:finite-negative-eigenvalues}.
The embeddability assumption is automatic when $n \geq 2$~\cite{Boutet_de_Monvel1975}.
Thus this theorem applies to all closed pseudo-Hermitian manifolds of dimension at least five.

In dimension three,
we can dispense with embeddability for the conclusion of boundedness from below.
The self-adjointness of $P$ without an embeddability assumption
was established in \cite{Takeuchi2024-preprint}.
Our second main result is the following,
which gives an affirmative answer to \cite{Takeuchi2025}*{Problem 4.3}.

\begin{theorem}
\label{thm:bounded-below-in-dimension-three}
	Let $(M, T^{1, 0} M, \theta)$ be a closed pseudo-Hermitian manifold of dimension $3$.
	Then the spectrum of the CR Paneitz operator $\GJMS$ is bounded below.
\end{theorem}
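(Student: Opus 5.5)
Our approach is to show that $\iproduct{\GJMS u}{u}_{0} \geq -C \norm{u}_{0}^{2}$ for every $u \in \Dom \GJMS$. Since $\GJMS$ is self-adjoint \cite{Takeuchi2024-preprint}, this lower bound on the quadratic form gives the lower bound on the spectrum. In dimension three the CR Paneitz operator has the form $\GJMS = 4 \overline{P}^{*}\overline{P}$-type expression. Modulo lower order terms, we write it as $\GJMS = \Delta_{b}^{2} + T^{2} + \text{(lower order)}$. Its Heisenberg principal symbol is, in the model Heisenberg group, $\mathcal{L}_{1}\mathcal{L}_{\bar 1}$-like, namely $(\Delta_{b} + iT)(\Delta_{b} - iT)$ up to normalisation. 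This symbol is non-negative but fails to be invertible exactly on the spectral pieces where $\Delta_{b} = \pm iT$, corresponding to the CR and anti-CR functions.

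The first step is a square-root construction on the Heisenberg group. Using the joint functional calculus of the commuting self-adjoint operators $\Delta_{b}$ and $-iT$ on $\bbH^{1}$, we define $Q_{0} \coloneqq f(\Delta_{b}, -iT)$ with $f(\lambda,\tau) = \sqrt{(\lambda^{2} - \tau^{2})_{+}}$, suitably symmetrised. Then $Q_{0}^{*}Q_{0}$ equals the model principal part, and $Q_{0}$ is homogeneous of Heisenberg order $2$. The subtle point is checking that $Q_{0}$ lies in the Heisenberg calculus, i.e.\ that its symbol is smooth away from zero despite the square root vanishing on the boundary of the joint spectrum. On the Heisenberg group, the joint spectrum sits on the discrete Landau levels $\lambda = |\tau|(2k+1)$. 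Hence $\lambda^{2} - \tau^{2}$ vanishes only on the bottom level $k=0$ and is bounded below by a multiple of $\tau^{2}$ elsewhere. So $f$ is given by a smooth function on the relevant discrete set, and one can verify that the corresponding symbol belongs to $\Hsymbol^{2}$.

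Next we transplant this to $M$. Using Heisenberg coordinates and a partition of unity, we build $Q \in \Hpsido^{2}(M)$ with principal symbol that of $Q_{0}$ at each point. Then $R \coloneqq \GJMS - Q^{*}Q$ has Heisenberg order at most $3$, and by symmetry can be arranged so that its principal part is compatible with the degeneracy. This gives
\[
\iproduct{\GJMS u}{u}_{0} = \norm{Qu}_{0}^{2} + \iproduct{Ru}{u}_{0}.
\]
The main obstacle is controlling $\iproduct{Ru}{u}_{0}$ without embeddability, since $u$ is not controlled in $\HSobolev^{2}$ in directions near the kernel of the symbol. First we would refine the construction so that $R$ itself factors. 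Using the structure $\GJMS = 4\overline{P}^{*}$-type operators and the Szegő-type projections $\Pi, \overline{\Pi}$ from the Heisenberg calculus (which exist as parametrices of $\square_{b}$ even in the non-embeddable case), we would write $R = Q^{*}A + A^{*}Q + B$. Here $A$ has order $1$ and $B$ has order $2$ with principal symbol vanishing on the degenerate levels. Completing the square then gives $\norm{(Q+A)u}_{0}^{2} - \norm{Au}_{0}^{2} + \iproduct{Bu}{u}_{0}$. The remaining terms would be handled by writing $u = \Pi u + \overline{\Pi} u + (1-\Pi-\overline{\Pi})u$. On the last piece, $\Delta_{b}$ is subelliptic relative to $Q$, so $\norm{Au}_{0}^{2}$ and $\iproduct{Bu}{u}_{0}$ are absorbed by $\varepsilon\norm{Qu}_{0}^{2} + C\norm{u}_{0}^{2}$ via interpolation. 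On the CR pieces, the relevant symbols of $A$ and $B$ vanish to leading order, and the resulting operators are of order $\le 0$, hence bounded. The key difficulty we anticipate is precisely this last verification: that the order-$3$ error restricted to the range of $\Pi$ drops to order $0$ in dimension three. This relies on $\GJMS$ annihilating (pluri)harmonic-type functions to top orders and on the specific three-dimensional structure, which is absent in higher dimensions.
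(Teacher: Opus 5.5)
\textbf{There is a genuine gap: the degenerate directions are never controlled.}

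Your model step is sound. The function $\sqrt{(\mu^{2}-\lambda^{2})_{+}}$, restricted to the joint spectrum, extends to a smooth homogeneous function, because its zero set (the bottom rays $\mu=\abs{\lambda}$) is separated from the rays with $\mu\geq 3\abs{\lambda}$. The resulting symbol is real and $U(1)$-invariant, and it is this invariance, not the partition of unity, that lets you globalize it.

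However, it is a square root of the \emph{degenerate} symbol. So $Q$ has no parametrix, and $\norm{Qu}_{0}^{2}$ gives no control of $\norm{u}_{2}$ on the range of $S+\overline{S}$. The order-three remainder therefore cannot be absorbed, and the whole proof rests on the step you yourself flag as the key difficulty. As formulated, that step fails:
\begin{itemize}
\item Solving $\sigma_{3}(R)=q\ast a+\overline{a}\ast q$ fixes, to leading order, the component of $A$ from the range of $\Pi$ into its complement in terms of $(I-\Pi)R\Pi$. That operator in general has order three: already $R\Pi=P\Pi-Q^{\ast}(Q\Pi)$, and $Q\Pi$ is generically a genuine order-one operator.
\item Hence $A\Pi$ has order one and $\norm{A\Pi u}_{0}^{2}$ is of size $\norm{\Pi u}_{1}^{2}$. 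Likewise $\Pi B\Pi$ has order two, not zero.
\item Since $P=(Q+A)^{\ast}(Q+A)-A^{\ast}A+B$ exactly, only the combination $\norm{(Q+A)\Pi u}_{0}^{2}-\norm{A\Pi u}_{0}^{2}+\iproduct{B\Pi u}{\Pi u}_{0}=\iproduct{P\Pi u}{\Pi u}_{0}$ can be expected to be controlled. Estimating its pieces separately throws that away.
\item The cross terms between $\Pi u$ and $(I-\Pi-\overline{\Pi})u$ in $-\norm{Au}_{0}^{2}+\iproduct{Bu}{u}_{0}$ are not addressed at all.
\end{itemize}
Bounding $\iproduct{P\Pi u}{\Pi u}_{0}$ from below needs an input about $P$ itself on the degenerate part, and your proposal does not supply one.

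\textbf{How the paper supplies that input.} Instead of a Szeg\H{o}-type projection, it uses the spectral projection $\pi_{\lambda}=E([-\lambda,\lambda])$ of $P$. By \cite{Takeuchi2024-preprint}, $\pi_{\lambda}\in\Hpsido^{0}(M)$, it agrees with $S+\overline{S}$ modulo $\Hpsido^{-1}(M)$, it satisfies $P\pi_{\lambda}=\pi_{\lambda}P\in\Hpsido^{-\infty}(M)$, and it has a partial inverse $N_{\lambda}\in\Hpsido^{-4}(M)$.
\begin{itemize}
\item The operator $\widetilde{P}=P+\pi_{\lambda}(\Delta_{b}+1)^{2}\pi_{\lambda}$ has a parametrix. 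Its principal symbol is the model one with the zero multiplier on the bottom rays replaced by $\mu^{2}$.
\item Your functional-calculus construction, applied to this modified multiplier, gives an \emph{invertible} real $U(1)$-invariant square root. The standard G{\aa}rding argument then works for $\widetilde{P}$, with the order-three remainder absorbed by interpolation.
\item Because $\pi_{\lambda}$ commutes with $P$, $\iproduct{Pu}{u}_{0}$ splits exactly, with no cross terms. On $(I-\pi_{\lambda})u=N_{\lambda}Pu\in\HSobolev^{4}(M)$, $P$ agrees with $\widetilde{P}$. On $\pi_{\lambda}u$, the form is at least $-\lambda\norm{\pi_{\lambda}u}_{0}^{2}$ by the spectral theorem.
\end{itemize}
So the degenerate directions are never estimated through symbols, and the regularity of a general $u\in\Dom P$ is handled automatically.
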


Together with automatic embeddability in higher dimensions,
these two theorems show that the critical CR GJMS operator is bounded below
on every closed pseudo-Hermitian manifold of dimension at least three.

As an application,
we determine the accumulation behavior of the negative eigenvalues on the Rossi sphere.
The results of \cite{Takeuchi2024-preprint} imply that
there are infinitely many distinct negative eigenvalues,
each of finite multiplicity,
and that they have no accumulation point away from zero.
\cref{thm:bounded-below-in-dimension-three} rules out their escape to $- \infty$.
It follows that zero is their only accumulation point and,
consequently,
that the CR Paneitz operator on the Rossi sphere does not have closed range
(\cref{thm:negative-eigenvalues-on-Rossi-sphere}).
Thus the Rossi sphere provides a negative answer to \cite{Takeuchi2025}*{Problem 6.5}.
The same conclusions hold for the non-embeddable three-dimensional tori
with infinitely many negative eigenvalues constructed in \cite{Ho-Takeuchi2026-preprint};
see \cref{rem:infinitely-many-negative-eigenvalues}.

The Rossi sphere also shows that the embeddability assumption
in \cref{thm:Garding-type-inequality-for-critical-CR-GJMS-operator} cannot be omitted in general.
Indeed,
if the Sobolev lower bound of \cref{thm:Garding-type-inequality-for-critical-CR-GJMS-operator}
held on the Rossi sphere,
the argument of \cref{cor:finite-negative-eigenvalues} would imply that
its negative spectral subspace is finite-dimensional,
contradicting the existence of infinitely many negative eigenvalues.

The proofs are based on the Heisenberg calculus
and a square-root construction for a modified critical CR GJMS operator on the Heisenberg group.
Let $\GJMS^{0}$ and $\Delta_{b}^{0}$ denote the critical CR GJMS operator
and the sub-Laplacian on the Heisenberg group,
and let $\Pi^{0}$ be the sum of the \Szego projection and its complex conjugate.
We consider
\begin{equation}
	\wtP^{0} \coloneqq \GJMS^{0} + \Pi^{0} (\Delta_{b}^{0})^{n + 1} \Pi^{0}.
\end{equation}
Using the joint functional calculus of $\Delta_{b}^{0}$ and $- \sqrt{- 1} T^{0}$,
where $T^{0}$ is the Reeb vector field,
we construct a $U(n)$-invariant real-valued Heisenberg symbol $q^{0}$ of order $n + 1$ such that
\begin{equation}
	\sigma_{2 n + 2}^{0}(\wtP^{0}) = q^{0} \ast^{0} q^{0},
\end{equation}
where $\ast^{0}$ is the product of model symbols
induced by composition of the corresponding convolution operators.

In the embeddable case, let $\Pi$ be the orthogonal projection onto $\Ker \GJMS$ and set
\begin{equation}
	\wtP \coloneqq \GJMS + \Pi (\Delta_{b} + 1)^{n + 1} \Pi.
\end{equation}
Using the description of the partial inverse and $\Pi$ in \cite{Takeuchi2023-GJMS},
we show that $\wtP$ has a parametrix in the Heisenberg calculus.
The $U(n)$-invariance of $q^{0}$ allows the model square root
to define a global real-valued square root of the Heisenberg principal symbol of $\wtP$
with respect to the symbol product.
Quantizing this symbol,
we write $\wtP = Q^{2} + R$,
where $Q$ is formally self-adjoint of Heisenberg order $n + 1$
and $R$ has Heisenberg order at most $2 n + 1$.
A parametrix estimate for $Q$,
together with interpolation to control $R$,
gives the desired lower bound.
Since $\wtP$ agrees with $\GJMS$ on $(\Ker \GJMS)^{\perp}$,
this proves \cref{thm:Garding-type-inequality-for-critical-CR-GJMS-operator}.

For \cref{thm:bounded-below-in-dimension-three},
we replace $\Pi$ by the spectral projection $\pi_{\lambda} = E(\clcl{- \lambda}{\lambda})$,
where $\lambda > 0$ and $E$ is the spectral resolution of $\GJMS$.
The Heisenberg pseudodifferential description of this projection
and the corresponding partial inverse,
established in \cite{Takeuchi2024-preprint},
allows us to apply the same square-root argument to
$\GJMS + \pi_{\lambda} (\Delta_{b} + 1)^{2} \pi_{\lambda}$.
This proves boundedness from below without an embeddability assumption.

This paper is organized as follows.
\cref{section:CR-manifolds} recalls the basic facts on CR geometry.
In \cref{section:model-operators-on-the-Heisenberg-group},
we review model operators on the Heisenberg group,
and in \cref{section:square-root-on-Heisenberg-group}
we construct the square root of the modified model operator.
\cref{section:Heisenberg-calculus-and-lower-bound-criterion} recalls the Heisenberg calculus
and establishes the lower-bound criterion used in the proofs.
We prove \cref{thm:Garding-type-inequality-for-critical-CR-GJMS-operator}
in \cref{section:lower-bounds-in-the-embeddable}.
\cref{section:CR-Paneitz-operator} is devoted to \cref{thm:bounded-below-in-dimension-three}
and applications to the Rossi sphere and non-embeddable three-dimensional tori.

\section{CR manifolds}
\label{section:CR-manifolds}

Let $M$ be an orientable smooth $(2 n + 1)$-dimensional manifold without boundary.
A \emph{CR structure} is a rank $n$ complex subbundle $T^{1, 0} M$
of the complexified tangent bundle $T M \otimes \bbC$ such that
\begin{equation}
	T^{1, 0} M \cap T^{0, 1} M = 0, \qquad
	\comm{\Gamma(T^{1 ,0} M)}{\Gamma(T^{1, 0} M)} \subset \Gamma(T^{1, 0} M),
\end{equation}
where $T^{0, 1} M$ is the complex conjugate of $T^{1, 0} M$ in $T M \otimes \bbC$.
Define a hyperplane bundle $H M$ of $T M$ by $H M \coloneqq \Re T^{1, 0} M$.
A typical example of a CR manifold is a real hypersurface $M$
in an $(n + 1)$-dimensional complex manifold $X$;
this $M$ has the canonical CR structure
\begin{equation}
	T^{1, 0} M
	\coloneqq T^{1, 0} X |_{M} \cap (T M \otimes \bbC).
\end{equation}

Take a nowhere-vanishing real one-form $\theta$ on $M$
such that $\theta$ annihilates $T^{1, 0} M$.
The \emph{Levi form} $\calL_{\theta}$ with respect to $\theta$ is the Hermitian form
on $T^{1,0} M$ defined by
\begin{equation}
	\calL_{\theta}(Z, W)
	\coloneqq - \sqrt{- 1} \, d \theta(Z, \ovW), \qquad Z, W \in T^{1, 0} M.
\end{equation}
A CR structure $T^{1, 0} M$ is said to be \emph{strictly pseudoconvex}
if the Levi form is positive definite for some $\theta$;
such a $\theta$ is called a \emph{contact form}.
The triple $(M, T^{1, 0} M, \theta)$ is called a \emph{pseudo-Hermitian manifold}.
Denote by $T$ the \emph{Reeb vector field} with respect to $\theta$; 
that is,
the unique vector field satisfying
\begin{equation}
	\theta(T) = 1, \qquad T \contr d\theta = 0.
\end{equation}

Define an operator $\delb_{b} \colon C^{\infty}(M) \to \Gamma((T^{0, 1} M)^{\ast})$ by
\begin{equation}
	\delb_{b} f
	\coloneqq d f|_{T^{0, 1} M}.
\end{equation}
A smooth function $f$ is called a \emph{CR holomorphic function}
if $\delb_{b} f = 0$.
A \emph{CR pluriharmonic function} is a real-valued smooth function
that is locally the real part of a CR holomorphic function.

The Levi form induces a Hermitian metric on $(T^{0, 1} M)^{\ast}$.
By using this Hermitian metric and the volume form $\theta \wedge (d \theta)^{n}$,
we obtain the formal adjoint
\begin{equation}
	\delb_{b}^{\ast} \colon \Gamma((T^{0, 1} M)^{\ast}) \to C^{\infty}(M)
\end{equation}
of $\delb_{b}$.
The \emph{Kohn Laplacian} $\Box_{b}$ and the \emph{sub-Laplacian} $\Delta_{b}$
are defined by
\begin{equation}
	\Box_{b}
	\coloneqq \delb_{b}^{\ast} \delb_{b},
	\qquad
	\Delta_{b}
	\coloneqq \Box_{b} + \overline{\Box}_{b}.
\end{equation}
Note that
\begin{equation}
	\Box_{b}
	= \frac{1}{2} \Delta_{b} + \frac{\sqrt{- 1}}{2} n T;
\end{equation}
see~\cite{Lee1986}*{Theorem  2.3} for example.
The Gaffney extension of the Kohn Laplacian,
also denoted by $\Box_{b}$,
is a self-adjoint operator on $L^{2}(M)$.
The kernel $\Ker \Box_{b}$ is the space of $L^{2}$ CR holomorphic functions.

The \emph{critical CR GJMS operator} $\GJMS$
is a real differential operator of order $2 n + 2$ acting on $C^{\infty}(M)$.
For $\mu \in \bbR$,
set
\begin{equation}
	L_{\mu}
	\coloneqq \frac{1}{2} \Delta_{b} + \frac{\sqrt{- 1}}{2} \mu T.
\end{equation}
The critical CR GJMS operator $\GJMS$ satisfies
\begin{equation}
	\GJMS - L_{- n} L_{- n + 2} \dotsm L_{n - 2} L_{n} \in \Hpsido^{2 n + 1}(M);
\end{equation}
see \cite{Ponge2008-Book}*{Proposition 3.5.7}.
It is known to be formally self-adjoint~\cite{Gover-Graham2005}*{Proposition 5.1}.
Moreover,
it annihilates CR pluriharmonic functions~\cite{Hirachi2014}*{Section 3.2}.

A CR manifold $(M, T^{1, 0} M)$ is said to be \emph{embeddable}
if there exists a smooth embedding of $M$ into some $\bbC^{N}$
such that $T^{1, 0} M = T^{1, 0} \bbC^{N}|_{M} \cap (T M \otimes \bbC)$.
It is known that a closed strictly pseudoconvex CR manifold $(M, T^{1, 0} M)$ is embeddable
if and only if $\Box_{b}$ has closed range~\cites{Boutet_de_Monvel1975,Kohn1986}.

\section{Model operators on the Heisenberg group}
\label{section:model-operators-on-the-Heisenberg-group}

The Heisenberg group $G$ is the Lie group with the underlying manifold $\bbC^{n} \times \bbR$
and the multiplication
\begin{equation}
	(z, t) \cdot (z^{\prime}, t^{\prime})
	\coloneqq (z + z^{\prime}, t + t^{\prime} + 2 \Im (z \cdot \ovz^{\prime})).
\end{equation}
The left translation by $(z, t)$ and the inversion on $G$
are denoted by $\ltrans_{(z, t)}$ and $\inversion$ respectively.

For $\alpha = 1, \dots , n$,
we introduce a left-invariant complex vector field $Z_{\alpha}^{0}$ by
\begin{equation}
	Z_{\alpha}^{0}
	\coloneqq \pdv{}{z^{\alpha}} + \sqrt{- 1} \ovz^{\alpha} \pdv{}{t}.
\end{equation}
The canonical CR structure $T^{1, 0} G$ is spanned by $Z_{1}^{0}, \dots , Z_{n}^{0}$.
Define a left-invariant one-form $\theta^{0}$ on $G$ by
\begin{equation}
	\theta^{0}
	\coloneqq d t + \sqrt{- 1} \sum_{\alpha = 1}^{n}
		(z^{\alpha} d \ovz^{\alpha} - \ovz^{\alpha} d z^{\alpha}).
\end{equation}
Then $\theta^{0}$ annihilates $T^{1, 0} G$
and the Levi form $\calL_{\theta^{0}}$ satisfies
$\calL_{\theta^{0}}(Z_{\alpha}^{0}, Z_{\beta}^{0}) = 2 \delta_{\alpha \beta}$;
in particular,
$\theta^{0}$ is a contact form on $G$.
The Reeb vector field $T^{0}$ coincides with $\pdvf{}{t}$.

In this setting,
the sub-Laplacian $\Delta_{b}^{0}$ with respect to $\theta^{0}$ is given by
\begin{equation}
	\Delta_{b}^{0}
	= - \frac{1}{2} \sum_{\alpha = 1}^{n} (Z_{\alpha}^{0} \ovZ_{\alpha}^{0} + \ovZ_{\alpha}^{0} Z_{\alpha}^{0}).
\end{equation}
For $\mu \in \bbR$,
define a left-invariant differential operator $L_{\mu}^{0}$ by
\begin{equation}
	L_{\mu}^{0}
	\coloneqq \frac{1}{2} \Delta_{b}^{0} + \frac{\sqrt{- 1}}{2} \mu T^{0}.
\end{equation}
The critical CR GJMS operator $\GJMS^{0}$ on $G$ is written as
\begin{equation}
	\GJMS^{0}
	= L_{- n}^{0} L_{- n + 2}^{0} \dotsm L_{n - 2}^{0} L_{n}^{0};
\end{equation}
see~\cite{Graham1984}*{Section 1}.

The Lie algebra $\frakg$ of $G$ is isomorphic to $\bbC^{n} \times \bbR$ as a linear space via
\begin{equation}
	\frakg \to \bbC^{n} \times \bbR;
	\qquad
	2 \Re \sum_{\alpha = 1}^{n} z^{\alpha} Z_{\alpha}^{0} + t T^{0}
	\mapsto (z, t).
\end{equation}
Under this identification,
the Lie bracket on $\frakg$ is given by
\begin{equation}
	\comm{(z, t)}{(z^{\prime}, t^{\prime})}
	= (0, 4 \Im(z \cdot \ovz^{\prime})).
\end{equation}
Moreover,
the exponential map $\frakg \to G$ coincides with the identity map on $\bbC^{n} \times \bbR$.
Furthermore,
the dual $\frakg^{\ast}$ of $\frakg$ is also canonically isomorphic to $\bbC^{n} \times \bbR$
as a linear space.
We denote these linear coordinates by $(\zeta, \tau)$.

For $r \in \bbR_{> 0}$,
the parabolic dilation $\dilation_{r}$ on $\bbC^{n} \times \bbR$ is defined by
\begin{equation}
	\dilation_{r}(z, t)
	= (r z, r^{2} t).
\end{equation}
This dilation defines automorphisms on $G$, $\frakg$, and $\frakg^{\ast}$,
for which we will use the same letter $\dilation_{r}$ by abuse of notation.
In what follows,
the term ``homogeneous'' is defined in terms of $\dilation_{r}$.
Set $\rho(z, t) \coloneqq (\abs{z}^{4} + t^{2})^{1 / 4}$,
which is homogeneous of degree $1$.
The unitary group $U(n)$ acts on $\bbC^{n} \times \bbR$ by
\begin{equation}
	\mu(U) (z, t)
	\coloneqq (U \cdot z, t),
\end{equation}
where $U \in U(n)$.
We will sometimes write $v$ for a point of $G$.
Denote by $d v$ the Lebesgue measure on $G$,
which is a Haar measure on $G$.

Let $\Schwartz(G)$ (resp.\ $\Schwartz(\frakg^{\ast})$)
be the space of rapidly decreasing functions on $G$ (resp.\ $\frakg^{\ast}$),
and $\Schwartz^{\prime}(G)$ (resp.\ $\Schwartz^{\prime}(\frakg^{\ast})$)
be that of tempered distributions on $G$ (resp.\ $\frakg^{\ast}$).
We denote the pairing between $k \in \Schwartz^{\prime}(G)$ and $f \in \Schwartz(G)$ by $\coupling{k}{f}$.
The pull-back by $\dilation_{r}$
induces endomorphisms on $\Schwartz(G)$ and $\Schwartz(\frakg^{\ast})$,
and these extend to endomorphisms on $\Schwartz^{\prime}(G)$ and $\Schwartz^{\prime}(\frakg^{\ast})$.
The Fourier transform $\Fourier$ defines isomorphisms
\begin{equation}
	\Schwartz(G) \xrightarrow{\cong} \Schwartz(\frakg^{\ast}),
	\qquad
	\Schwartz^{\prime}(G) \xrightarrow{\cong} \Schwartz^{\prime}(\frakg^{\ast});
\end{equation}
in our convention,
the Fourier transform $\Fourier (f)$ of $f \in \Schwartz(G)$ is defined by
\begin{equation}
	\Fourier (f)(\zeta, \tau)
	\coloneqq \int_{G} e^{- \sqrt{- 1} (\Re(z \cdot \ovxz) + t \tau)} f(z, t) d v.
\end{equation}

Now we consider ``model operators'' of the Heisenberg calculus.
For $m \in \bbR$,
set
\begin{equation}
	\Hsymbol^{m}
	\coloneqq \Set{a \in C^{\infty}(\frakg^{\ast} \setminus \{0\}) | \dilation_{r}^{\ast} a = r^{m} a},
\end{equation}
which is the space of \emph{Heisenberg symbols} of order $m$.
Let $\scrG^{m}$ be the space of $g \in \Schwartz^{\prime}(\frakg^{\ast})$ such that
$g$ is smooth on $\frakg^{\ast} \setminus \{0\}$ and satisfies
\begin{equation}
	\dilation_{r}^{\ast} g
	= r^{m} g + (r^{m} \log r) h,
\end{equation}
where $h \in \Schwartz^{\prime}(\frakg^{\ast})$
with $\supp h \subset \{0\}$ and $\dilation_{r}^{\ast} h = r^{m} h$.
The restriction map $\scrG^{m} \to \Hsymbol^{m}$ is known to be surjective~\cite{Beals-Greiner1988}*{Proposition 15.8}.
Moreover,
the inverse Fourier transform gives an isomorphism
\begin{equation}
	\Fourier^{- 1} \colon \scrG^{m} \xrightarrow{\cong} \scrK_{- m - 2 n - 2},
\end{equation}
where $\scrK_{l}$ is the space of $k \in \Schwartz^{\prime}(G)$ such that
$k$ is smooth on $G \setminus \{0\}$ and satisfies
\begin{equation}
	\dilation_{r}^{\ast} k
	= r^{l} k + (r^{l} \log r) \psi
\end{equation}
for a homogeneous polynomial $\psi$ of degree $l$~\cite{Beals-Greiner1988}*{Proposition 15.24}.
We also introduce a function space on which Heisenberg symbols act.
Let $\Schwartz_{0}(G)$ be the space of $f \in \Schwartz(G)$ such that
\begin{equation}
	\int_{G} \psi(v) f(v) d v = 0
\end{equation}
for any polynomial $\psi$ on $G$.
This condition is equivalent to the condition that
$\Fourier (f) \in \Schwartz(\frakg^{\ast})$ vanishes to infinite order at the origin.

Let $\Hpsido^{m}$ denote the space of continuous linear endomorphisms $A$ of $\Schwartz_{0}(G)$
that commute with left translations
and admit a formal adjoint $A^{\ast} \colon \Schwartz_{0}(G) \to \Schwartz_{0}(G)$ satisfying
\begin{equation}
	A^{\ast} \circ \dilation_{r}^{\ast}
	= r^{m} \dilation_{r}^{\ast} \circ A^{\ast}.
\end{equation}
We now recall a canonical isomorphism between $\Hsymbol^{m}$ and $\Hpsido^{m}$.

\begin{proposition}[\cite{Takeuchi2023-GJMS}*{Proposition 3.1}]
\label{prop:well-defined-of-Hpsido}
	Let $a \in \Hsymbol^{m}$ and
	take $g \in \scrG^{m}$ with $g |_{\frakg^{\ast} \setminus \{0\}} = a$.
	Then the convolution operator
	\begin{equation}
	\label{eq:definition-of-O(p)}
		f \mapsto [\Fourier^{- 1}(g) \ast f](v)
			\coloneqq \coupling{\Fourier^{- 1}(g)}{f \circ \ltrans_{v} \circ \inversion}
	\end{equation}
	defines an endomorphism on $\Schwartz_{0}(G)$
	and is independent of the choice of $g$.
	Moreover,
	this operator commutes with left translation and is homogeneous of degree $m$.
	Furthermore,
	it is equal to zero if and only if $a = 0$.
\end{proposition}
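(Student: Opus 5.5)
The plan is to verify the four claims in turn: that the operator is well defined on $\Schwartz_{0}(G)$, that it does not depend on the choice of $g$, that it commutes with left translations and is homogeneous of degree $m$, and that it vanishes exactly when $a$ does.

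\emph{Well-definedness.} Set $k \coloneqq \Fourier^{-1}(g) \in \scrK_{-m-2n-2}$; this is a tempered distribution, smooth away from the origin, with the log-homogeneity described above. For $f \in \Schwartz_{0}(G)$, the function $f \circ \ltrans_{v} \circ \inversion$ is again Schwartz, since left translation and inversion are polynomial diffeomorphisms of $\bbC^{n} \times \bbR$ with polynomial inverses. Hence the pairing is defined, and it depends smoothly on $v$. To show the result lies in $\Schwartz_{0}(G)$, I would pass to the Fourier side. The group convolution can be written through the Weyl-type symbol product rather than ordinary multiplication, and $\Fourier f$ vanishes to infinite order at $0$. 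Multiplying it by $g$, which is smooth away from the origin and has at most polynomial (log-)homogeneous growth, gives a function that is smooth and flat at the origin. The twisting from the group law preserves this property; this can be checked by expanding the convolution in homogeneous Taylor pieces, as in Beals--Greiner \cite{Beals-Greiner1988}. This step, namely that convolution with a homogeneous kernel preserves $\Schwartz_{0}(G)$ (decay together with vanishing of all moments), is the main obstacle. I would handle it by citing the Beals--Greiner results on the convolution algebra $\scrK_{*}$ acting on $\Schwartz_{0}$: moments of $k \ast f$ are expressed through moments of $f$ paired with polynomials, using polynomiality of the group law, and so they vanish.

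\emph{Independence of $g$.} If $g, g' \in \scrG^{m}$ have the same restriction, then $g - g'$ is supported at $\{0\}$. Its inverse Fourier transform is therefore a polynomial $\psi$. Convolving with $f$ gives $\int \psi(v \cdot w^{-1}) f(w)\, dw$, and for fixed $v$ the integrand is a polynomial in $w$ times $f$. By the defining property of $\Schwartz_{0}(G)$ this integral vanishes.

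\emph{Left-invariance and homogeneity.} Left-invariance follows directly from the formula $f \circ \ltrans_{v} \circ \inversion$. For homogeneity, the relation $\dilation_{r}^{*} k = r^{-m-2n-2} k + (\log r)$-polynomial terms holds. The polynomial terms again act trivially on $\Schwartz_{0}$. Changing variables, with the Haar measure scaling by $r^{2n+2}$, gives $k \ast (\dilation_{r}^{*} f) = r^{m} \dilation_{r}^{*}(k \ast f)$. The formal adjoint is convolution with $\overline{k \circ \inversion}$, which lies in the same class, so the adjoint condition holds as well.

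\emph{Vanishing exactly when $a = 0$.} If $a = 0$, then $g$ is supported at $0$, and the operator vanishes by the independence argument. Conversely, suppose the operator is zero on $\Schwartz_{0}(G)$. Then $\coupling{k}{\phi} = 0$ for every $\phi$ in $\Schwartz_{0}(G)$, since the class of functions $f \circ \inversion$ ranges over all of $\Schwartz_{0}(G)$. Fourier transforming, $g$ annihilates every Schwartz function that is flat at $0$, and these include $C_{c}^{\infty}(\frakg^{*} \setminus \{0\})$. Therefore $\supp g \subset \{0\}$, which means $a = 0$.
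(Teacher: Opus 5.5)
The paper takes this proposition from earlier work without reproducing a proof, so I am comparing your proposal with the standard argument. Your treatment of independence of $g$, left-invariance, homogeneity, and the criterion for vanishing is correct and is the expected one. There is one small slip: with the paper's convention, $(\psi \ast f)(v) = \int_{G} \psi(w^{-1} v) f(w) \, dw$, not $\psi(v w^{-1})$. Both are polynomials in $w$, so nothing changes.

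The gap is in the first step, which is the only substantive part of the proposition. Nothing you write shows that $k \ast f$ is even rapidly decreasing, let alone that it lies in $\Schwartz_{0}(G)$.

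\begin{itemize}
\item Your Fourier-side heuristic is the abelian argument. On $G$, the Euclidean Fourier transform turns convolution into a $\tau$-dependent Moyal-type product. For fixed $\tau$ this product is nonlocal in $\zeta$ and involves $g(\cdot, \tau)$ at all frequencies. It collapses to the pointwise product only as $\tau \to 0$, which is exactly where $g$ is singular. So ``the twisting preserves flatness'' is the very claim to be proved. An expansion in homogeneous Taylor pieces gives only an asymptotic series, not an identity.
\item Your moment identity $\int \psi \, (k \ast f) \, dv = \langle k, \int \psi(u \, \cdot) f(u) \, du \rangle = 0$ interchanges the pairing with $k$ and the $v$-integral. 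That is legitimate when $k$ has compact support. For a kernel of size $\rho^{-m-2n-2}$ (times $\log \rho$) at infinity, the double integral is not absolutely convergent for high-degree $\psi$. The identity therefore presupposes decay of $k \ast f$ and still needs a limiting argument.
\end{itemize}

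You could close the gap by citing a precise statement that convolution with elements of $\scrK_{l}$ preserves $\Schwartz_{0}(G)$. Alternatively, here is one standard argument.

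\emph{Splitting.} Write $k = \chi k + k_{\infty}$ with $\chi \in C^{\infty}_{c}(G)$ equal to $1$ near $0$. Convolution with $\chi k$ preserves $\Schwartz(G)$. Fubini is valid for this piece, so your moment computation applies to it.

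\emph{Decay.} For the other piece, $(k_{\infty} \ast f)(v) = \int_{G} f(u) k_{\infty}(u^{-1} v) \, du$. Since $f$ annihilates polynomials, you may subtract the homogeneous Taylor polynomial of degree $N$ of $u \mapsto k_{\infty}(u^{-1} v)$ at $u = 0$. The left-invariant $u$-derivatives of $k_{\infty}(u^{-1} v)$ are right-invariant derivatives of $k$ evaluated at $u^{-1} v$. These are again homogeneous, of degree $-m-2n-2-|I|$, up to the logarithmic term. Use the stratified Taylor inequality on $\rho(u) \leq c \rho(v)$ and the rapid decay of $f$ on the complement. This gives $O(\rho(v)^{-m-2n-2-N} \log \rho(v))$ for every $N$. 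Left-invariant derivatives in $v$ fall on $k$, so the same bound holds for them, and hence $k \ast f \in \Schwartz(G)$.

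\emph{Moments.} Replace $k_{\infty}$ by $k_{\infty} \cdot (\phi \circ \dilation_{\epsilon})$, with $\phi \in C^{\infty}_{c}(G)$ equal to $1$ near $0$. These kernels satisfy the same estimates uniformly in $\epsilon$, and Fubini applies to them. Dominated convergence then shows that the convolutions converge to $k_{\infty} \ast f$ in every polynomially weighted $L^{1}$ norm. Hence all moments of $k \ast f$ vanish.
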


\begin{definition}
\label{def:action-of-principal-symnol}
	For $a \in \Hsymbol^{m}$,
	an operator $O^{0}(a) \colon \Schwartz_{0}(G) \to \Schwartz_{0}(G)$
	is defined by \cref{eq:definition-of-O(p)}.
\end{definition}

The map $O^{0}$ is compatible with taking formal adjoints and with composition.

\begin{theorem}[\cite{Takeuchi2023-GJMS}*{Theorem 3.3}]
\label{thm:adjoint-and-composition-of-model-operators}
	(i) The formal adjoint of $O^{0}(a)$,
	$a \in \Hsymbol^{m}$,
	is given by $O^{0}(\ova)$.
	In particular,
	$O^{0}(a)$ is formally self-adjoint if and only if $a$ is real-valued.
	
	(ii) There exists a bilinear product
	\begin{equation}
		\ast^{0} \colon \Hsymbol^{m_{1}} \times \Hsymbol^{m_{2}} \to \Hsymbol^{m_{1} + m_{2}}
	\end{equation}
	such that $O^{0}(a_{1}) O^{0}(a_{2}) = O^{0}(a_{1} \ast^{0} a_{2})$
	for any $a_{1} \in \Hsymbol^{m_{1}}$ and $a_{2} \in \Hsymbol^{m_{2}}$.
\end{theorem}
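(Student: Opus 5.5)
The statement has two parts. Part (i) is a direct kernel computation. Part (ii) reduces to showing that a composition of convolution operators is again convolution with a kernel of the right homogeneity.

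For (i), fix $a \in \Hsymbol^{m}$ and an extension $g \in \scrG^{m}$, and put $k \coloneqq \Fourier^{-1}(g) \in \scrK_{-m-2n-2}$. Since the exponential map is the identity, the inversion is $\inversion(z,t) = (-z,-t)$, and Lebesgue measure is bi-invariant. Hence, for $f_{1}, f_{2} \in \Schwartz_{0}(G)$, unwinding \cref{eq:definition-of-O(p)} gives
\begin{equation}
	\iproduct{k \ast f_{1}}{f_{2}}_{0} = \iproduct{f_{1}}{k^{\ast} \ast f_{2}}_{0},
	\qquad
	k^{\ast} \coloneqq \overline{k \circ \inversion}.
\end{equation}
This identity is first checked for $k$ a Schwartz function and then extended by continuity, using that the pairings only involve $\Schwartz_{0}$. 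With our sign convention for $\Fourier$, we have $\Fourier(\overline{k \circ \inversion}) = \overline{\Fourier(k)} = \overline{g}$. Moreover $\overline{g} \in \scrG^{m}$ restricts to $\ova$, so the adjoint is $O^{0}(\ova)$. For the second claim, $O^{0}(a)$ is formally self-adjoint exactly when $O^{0}(a - \ova) = 0$. By the injectivity part of \cref{prop:well-defined-of-Hpsido}, this happens iff $a = \ova$, i.e.\ iff $a$ is real-valued.

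For (ii), consider $B \coloneqq O^{0}(a_{1}) O^{0}(a_{2})$. By \cref{prop:well-defined-of-Hpsido}, $B$ is a continuous endomorphism of $\Schwartz_{0}(G)$ that commutes with left translations and is homogeneous of degree $m_{1}+m_{2}$. By (i), its formal adjoint $O^{0}(\ova_{2}) O^{0}(\ova_{1})$ has the same properties, so $B \in \Hpsido^{m_{1}+m_{2}}$. It therefore suffices to show that every element of $\Hpsido^{m}$ is of the form $O^{0}(a)$. Then I define $a_{1} \ast^{0} a_{2}$ to be that $a$; it is unique by the injectivity in \cref{prop:well-defined-of-Hpsido}. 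Bilinearity then also follows from injectivity.

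To realize $B$ as some $O^{0}(a)$, I would work with kernels. Set $k_{j} = \Fourier^{-1}(g_{j}) \in \scrK_{-m_{j}-2n-2}$, and split each as $k_{j} = \chi k_{j} + (1-\chi)k_{j}$ with $\chi$ a cutoff near $0$. The local parts are compactly supported distributions, so they convolve freely. The far parts are smooth and bounded by $\rho^{-m_{j}-2n-2}$ (times $\log\rho$) at infinity. Their convolution against each other, tested only on $\Schwartz_{0}$, makes sense after subtracting Taylor polynomials, which the vanishing moments of $\Schwartz_{0}$ permit. This produces a distribution $k \in \Schwartz'(G)$, smooth off the origin, with $B f = k \ast f$ for $f \in \Schwartz_{0}(G)$.

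Homogeneity of $B$ then forces $\dilation_{r}^{\ast} k - r^{l} k$, with $l = -m_{1}-m_{2}-2n-2$, to annihilate $\Schwartz_{0}(G)$. Hence it is a polynomial, which is homogeneous of degree $l$ by a scaling argument. Adjusting $k$ by a polynomial, or allowing the $\log r$ term as in the definition of $\scrK_{l}$, places $k$ in $\scrK_{l}$. Then $\Fourier(k) \in \scrG^{m_{1}+m_{2}}$, and its restriction to $\frakg^{\ast} \setminus \{0\}$ is the desired symbol. This last step, making sense of the convolution of two homogeneous kernels with possible logarithmic terms and identifying the result modulo polynomials, is the main obstacle. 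I would handle it by following the Beals--Greiner analysis of convolution in $\scrK_{l}$~\cite{Beals-Greiner1988}, exploiting that ambiguities by polynomials are invisible on $\Schwartz_{0}(G)$.
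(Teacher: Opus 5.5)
The paper does not prove this statement; it is quoted from \cite{Takeuchi2023-GJMS}*{Theorem 3.3}. There is therefore no in-paper argument to compare against, and I assess your proposal on its own terms.

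Part (i) is a complete proof. The adjoint kernel $\overline{k \circ \inversion}$, the formula $\inversion(z, t) = (-z, -t)$, the identity $\Fourier(\overline{k \circ \inversion}) = \overline{\Fourier(k)}$, and the appeal to injectivity in \cref{prop:well-defined-of-Hpsido} are all correct. Uniqueness of the formal adjoint uses that $\Schwartz_{0}(G)$ is dense in $L^{2}(G)$.

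In (ii), showing $B \coloneqq O^{0}(a_{1}) O^{0}(a_{2}) \in \Hpsido^{m_{1} + m_{2}}$ via (i) is right. What you then say suffices is exactly the surjectivity of $O^{0}$ that the paper records immediately afterwards, also from \cite{Takeuchi2023-GJMS}. If that result is available independently of (ii), you are finished at that point and the kernel analysis is unnecessary.

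If you do not use it, your last two paragraphs carry everything, and two points need repair.

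First, the existence of a kernel needs no splitting. The map $f \mapsto (B f)(0)$ is continuous on $\Schwartz_{0}(G)$ and extends to $\Schwartz(G)$ by Hahn--Banach; left invariance then gives $B f = k \ast f$ with $k \in \Schwartz^{\prime}(G)$, unique modulo polynomials. The real content is smoothness of $k$ on $G \setminus \{0\}$, and there your far--far step is only heuristic. The subtracted Taylor terms are not separately integrable in the integration variable, so the regularized integral must be matched with $B$ on $\Schwartz_{0}(G)$ through a truncation and a limiting argument. Deferring this to Beals--Greiner is no worse than the paper's citation, but it is the entire analytic content of (ii).

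Second, the claim that $p_{r} \coloneqq \dilation_{r}^{\ast} k - r^{l} k$ is homogeneous of degree $l$ is false for a general representative. Adding to $k$ a homogeneous polynomial $\phi$ of degree $d \neq l$ changes $p_{r}$ by $(r^{d} - r^{l}) \phi$. The correct statement follows from the cocycle identity $p_{r s} = r^{l} p_{s} + \dilation_{s}^{\ast} p_{r}$ together with its counterpart with $r$ and $s$ exchanged:
\begin{itemize}
\item For $d \neq l$, the degree-$d$ component of $p_{r}$ equals $(r^{l} - r^{d}) \phi_{d}$ for a fixed polynomial $\phi_{d}$. These components disappear after replacing $k$ by $k + \sum_{d \neq l} \phi_{d}$.
\item The degree-$l$ component satisfies $p^{(l)}_{r s} = r^{l} p^{(l)}_{s} + s^{l} p^{(l)}_{r}$. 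By continuity in $r$ it equals $(r^{l} \log r) \psi$, which is precisely the logarithmic term permitted in $\scrK_{l}$.
\end{itemize}
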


To simplify notation,
we write
\begin{equation}
	a^{\ast^{0} k}
	\coloneqq \underbrace{a \ast^{0} \dots \ast^{0} a}_{\text{$k$ factors}}.
\end{equation}
In particular,
$O^{0}$ defines an injective map from $\Hsymbol^{m}$ to $\Hpsido^{m}$.
In fact,
this is an isomorphism.

\begin{proposition}[\cite{Takeuchi2023-GJMS}*{Proposition 3.4}]
	For any $A \in \Hpsido^{m}$,
	there exists a unique $a \in \Hsymbol^{m}$ such that $A = O^{0}(a)$.
\end{proposition}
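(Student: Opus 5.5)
We prove uniqueness and existence separately. Uniqueness is immediate from \cref{prop:well-defined-of-Hpsido}: if $O^{0}(a) = O^{0}(a^{\prime})$, then $O^{0}(a - a^{\prime}) = 0$, hence $a = a^{\prime}$. For existence, we realize $A$ as a convolution operator, identify its kernel modulo polynomials, and read off the symbol on $\frakg^{\ast} \setminus \{0\}$.

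\emph{Step 1 (the kernel).} Define a linear functional on $\Schwartz_{0}(G)$ by $\ell(f) \coloneqq (A f)(0)$. This is continuous because $A$ is continuous on $\Schwartz_{0}(G)$ and evaluation at $0$ is continuous. Since $A$ commutes with left translations, we have $(A f)(v) = \ell(f \circ \ltrans_{v})$. The space $\Schwartz_{0}(G)$ is a closed subspace of $\Schwartz(G)$, so Hahn--Banach extends $\ell$ to some $k_{0} \in \Schwartz^{\prime}(G)$. Any two extensions differ by a distribution annihilating $\Schwartz_{0}(G)$, that is, by a polynomial. Rewriting $\ell(f \circ \ltrans_{v})$ through the inversion $\inversion$ gives $A f = k \ast f$ in the sense of \cref{eq:definition-of-O(p)} for $f \in \Schwartz_{0}(G)$, where $k = k_{0} \circ \inversion$ up to the obvious reflection.

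\emph{Step 2 (homogeneity).} Taking formal adjoints in $A^{\ast} \circ \dilation_{r}^{\ast} = r^{m} \dilation_{r}^{\ast} \circ A^{\ast}$ gives $A \circ \dilation_{r}^{\ast} = r^{-m} \dilation_{r}^{\ast} \circ A$ on $\Schwartz_{0}(G)$; the normalization is whichever one \cref{prop:well-defined-of-Hpsido} uses. Hence $\dilation_{r}^{\ast} k - r^{-m-2n-2} k$ annihilates $\Schwartz_{0}(G)$ and is therefore a polynomial for each $r$. Set $g \coloneqq \Fourier(k)$. Then $\dilation_{r}^{\ast} g - r^{m} g$ is supported at the origin of $\frakg^{\ast}$. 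Consequently the restriction $a \coloneqq g|_{\frakg^{\ast} \setminus \{0\}}$ is a distribution that is homogeneous of degree $m$.

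\emph{Step 3 (smoothness; main obstacle).} The delicate point is that $a$ is $C^{\infty}$ on $\frakg^{\ast} \setminus \{0\}$. For a homogeneous distribution this is equivalent to $k$ being smooth on $G \setminus \{0\}$, so I would prove the latter.

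\begin{itemize}
\item The input is that $k \ast f \in \Schwartz_{0}(G)$ for every $f \in \Schwartz_{0}(G)$. The same holds for $\overline{k \circ \inversion}$, which is a kernel for $A^{\ast}$.
\item Fix a point $v_{0} \neq 0$. Use left invariance and homogeneity to move $v_{0}$ and rescale, so that the singularity at $v_{0}$ is tested against functions $f \in \Schwartz_{0}(G)$ concentrated near the origin.
\item Concretely, take $f = (\Delta_{b}^{0})^{N} \varphi$ with $\varphi \in C_{c}^{\infty}$ an approximate identity. Such $f$ has vanishing moments up to order $2N - 1$, and I would approximate it within $\Schwartz_{0}(G)$.
\item Since $k \ast f$ is smooth and $\Delta_{b}^{0}$ is hypoelliptic, the smoothness of $k \ast \varphi$ near $v_{0}$ then propagates to $k$ itself.
\end{itemize}

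I expect the bookkeeping needed to keep the test functions inside $\Schwartz_{0}(G)$ to be the main technical issue. Once smoothness is established, $g \in \scrG^{m}$ (up to adjusting by a distribution supported at the origin, using the surjectivity $\scrG^{m} \to \Hsymbol^{m}$) and $a \in \Hsymbol^{m}$. Then $O^{0}(a) f = \Fourier^{-1}(g) \ast f = A f$ for all $f \in \Schwartz_{0}(G)$, by the independence of the choice of $g$ in \cref{prop:well-defined-of-Hpsido}.
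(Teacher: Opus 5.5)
Uniqueness and Steps 1--2 are fine, apart from one slip in Step 2. Since $(\dilation_{r}^{\ast})^{\ast} = r^{-2n-2}\dilation_{1/r}^{\ast}$, taking adjoints gives $A \circ \dilation_{r}^{\ast} = r^{m}\dilation_{r}^{\ast} \circ A$, not $r^{-m}$. The kernel degree $-m-2n-2$ you then use is the one that matches $r^{m}$. Your reduction of Step 3 to smoothness of $k$ on $G \setminus \{0\}$ is also fine.

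\textbf{The gap} is the argument for that smoothness, and it is not a bookkeeping issue. For every $k \in \Schwartz^{\prime}(G)$ and $\varphi \in C^{\infty}_{c}(G)$, the convolution $k \ast \varphi$ is automatically smooth. So smoothness of $k \ast f$ says nothing about the singular support of $k$. You also never obtain an equation such as $(\Delta_{b}^{0})^{N} k \in C^{\infty}$ near $v_{0}$, which is what hypoellipticity would need. Moreover, $(\Delta_{b}^{0})^{N}\varphi$ has non-vanishing moments of order $2N$ (pair it with $\abs{z}^{2N}$), so it is not in $\Schwartz_{0}(G)$. Since $\Schwartz_{0}(G)$ is closed in $\Schwartz(G)$, it cannot be approximated in the topology in which $A$ is continuous. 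The hypotheses therefore give no control of $k \ast (\Delta_{b}^{0})^{N}\varphi$.

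\textbf{What is needed.} Your second bullet points the right way. The information to use is the rapid decay of $A f$ at infinity, transported to small scales by $A(f \circ \dilation_{R}) = R^{m}(A f) \circ \dilation_{R}$, together with a reproducing formula. This is the mechanism of the proof of \cref{prop:convolution-kernel-of-square-root}; one way to carry it out is as follows.
\begin{enumerate}
\item Take $\Phi \in C^{\infty}_{c}(\bbR)$ with $\Phi = 1$ near $0$, put $\phi(\mu) \coloneqq \Phi(\mu) - \Phi(4\mu)$, and let $\varphi$, $\psi$ be the convolution kernels of $\Phi(\Delta_{b}^{0})$, $\phi(\Delta_{b}^{0})$.
\item Both kernels lie in $\Schwartz(G)$, by Hulanicki's theorem or the multiplier theorem used in that proof. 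Since $\Phi(0) = 1$, we have $\int_{G}\varphi\,dv = 1$. Also $\psi \in \Schwartz_{0}(G)$: $\phi$ vanishes near $0$, so $\psi = (\Delta_{b}^{0})^{N}\psi_{N}$ with $\psi_{N} \in \Schwartz(G)$ for every $N$.
\item Set $\varphi_{j} \coloneqq 2^{(2n+2)j}\varphi \circ \dilation_{2^{j}}$ and define $\psi_{j}$ likewise. Then \cref{lem:homogeneity-of-convolution-kernel} and telescoping give $\varphi_{N} = \varphi + \sum_{j=1}^{N}\psi_{j}$.
\item Homogeneity gives $k \ast \psi_{j} = A\psi_{j} = 2^{(m+2n+2)j}(A\psi) \circ \dilation_{2^{j}}$, with $A\psi \in \Schwartz(G)$.
\item The estimates of that proof, with the decay exponent taken larger than $m+2n+2+d$, show that $\sum_{j \geq 1} k \ast \psi_{j}$ converges in $C^{\infty}_{\mathrm{loc}}(G \setminus \{0\})$.
\item Meanwhile $k \ast \varphi$ is smooth, and $k \ast \varphi_{N} \to k$ in $\Schwartz^{\prime}(G)$ because $(\varphi_{N})$ is an approximate identity. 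Hence $k$ is smooth on $G \setminus \{0\}$.
\end{enumerate}
Your concluding passage to $\scrG^{m}$ then goes through: adjust $k$ by a polynomial so that it lies in $\scrK_{-m-2n-2}$ and apply \cite{Beals-Greiner1988}*{Proposition 15.24}.
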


\begin{definition}
	The \emph{Heisenberg symbol}
	\begin{equation}
		\sigma_{m}^{0} \colon \Hpsido^{m} \to \Hsymbol^{m}
	\end{equation}
	is defined as the inverse of $O^{0}$.
\end{definition}

It follows from \cref{thm:adjoint-and-composition-of-model-operators} that
\begin{equation}
	\sigma_{m}^{0}(A^{\ast})
	= \overline{\sigma_{m}^{0}(A)},
	\qquad
	\sigma_{m_{1} + m_{2}}^{0}(A_{1} A_{2})
	= \sigma_{m_{1}}^{0}(A_{1}) \ast^{0} \sigma_{m_{2}}^{0}(A_{2})
\end{equation}
for $A \in \Hpsido^{m}$, $A_{1} \in \Hpsido^{m_{1}}$, and $A_{2} \in \Hpsido^{m_{2}}$.
In particular,
$A$ is formally self-adjoint if and only if $\sigma_{m}^{0}(A)$ is real-valued.

\section{A square-root construction on the Heisenberg group}
\label{section:square-root-on-Heisenberg-group}

In this section,
we construct the square root of a modified critical CR GJMS operator on the Heisenberg group.
This square root plays a crucial role in our proofs.

To this end,
we first consider the \Szego projection $S^{0}$
and the orthogonal projection $\Pi^{0}$ onto the $L^{2}$-closure
of the complexified space of square-integrable smooth CR pluriharmonic functions on $G$.
For $\varepsilon > 0$,
set
\begin{equation}
	s_{\varepsilon}(z, t)
	\coloneqq \frac{2^{n - 1} n!}{\pi^{n + 1}} (\abs{z}^{2} + \varepsilon - \sqrt{- 1} t)^{- n - 1}
	\in C^{\infty}(G).
\end{equation}
It is known that the \Szego projection $S^{0}$ is
the convolution operator with kernel
\begin{equation}
	s \coloneqq \lim_{\varepsilon \to 0} s_{\varepsilon} \in \scrK_{- 2 n - 2},
\end{equation}
where the limit is taken in $\Schwartz^{\prime}(G)$;
see \cite{Greiner-Kohn-Stein1975}*{Equation 10}.
Moreover,
the operator $\Pi^{0}$ coincides with $S^{0} + \ovS^{0}$~\cite{Graham1984}*{Remark 2}.

\begin{theorem}
\label{thm:square-root-of-critical-CR-GJMS-operator}
	Set
	\begin{equation}
		\wtP^{0}
		\coloneqq \GJMS^{0} + \Pi^{0} (\Delta_{b}^{0})^{n + 1} \Pi^{0}.
	\end{equation}
	There exists a $U(n)$-invariant real-valued $q^{0} \in \Hsymbol^{n + 1}$
	such that $\sigma_{2 n + 2}^{0}(\wtP^{0}) = q^{0} \ast^{0} q^{0}$.
\end{theorem}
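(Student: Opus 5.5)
The plan is to use the joint functional calculus of the commuting, formally self-adjoint operators $\Delta_{b}^{0}$ and $D \coloneqq -\sqrt{-1}\,T^{0}$ on $L^{2}(G)$. Taking the Fourier transform in $t$ (dual variable $\tau$) and decomposing each fiber by the Bargmann–Fock (Hermite) expansion, $\Delta_{b}^{0}$ acts on the $k$-th level as multiplication by $\abs{\tau}(2k+n)$, and $D$ acts as $\tau$. Here $k \geq 0$ and the normalization is fixed by the relation $\Box_{b}^{0} = \frac12\Delta_{b}^{0} + \frac{\sqrt{-1}}{2} n T^{0}$.

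\textbf{Step 1: spectral description of $\wtP^{0}$.} In this picture, $L_{\mu}^{0}$ becomes multiplication by $\frac12\abs{\tau}(2k+n) + \frac{\mu}{2}\tau$. Thus $\GJMS^{0}$ becomes multiplication by
\[
  p(k,\tau) = \prod_{j=0}^{n} \tfrac12 \abs{\tau}\bigl(2k + n + \operatorname{sgn}(\tau)(2j-n)\bigr).
\]
For $\tau > 0$, the factors are $\abs{\tau}(k+j)$. For $\tau<0$, they are $\abs{\tau}(k+n-j)$. Each factor is $\ge 0$, and the product vanishes exactly when $k=0$. The level $k=0$ is the kernel of $\Box_{b}^{0}$ (for $\tau<0$) or of $\overline{\Box}_{b}^{0}$ (for $\tau>0$), so it is exactly the range of $\Pi^{0} = S^{0} + \ovS^{0}$. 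Therefore $\Pi^{0}$ is the spectral projection onto $\{k=0\}$, and $\Pi^{0}(\Delta_{b}^{0})^{n+1}\Pi^{0}$ acts there as $(n\abs{\tau})^{n+1}$. It follows that $\wtP^{0}$ acts by a function $\tilde p(k,\tau) \geq c\,(\abs{\tau}(2k+n))^{n+1}$ with $c>0$, and $\tilde p$ is homogeneous of degree $n+1$ under $(k,\tau) \mapsto (k, r^{2}\tau)$.

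\textbf{Step 2: the square root.} Set $Q^{0} \coloneqq \sqrt{\tilde p}(k, D)$, defined by joint functional calculus. It is formally self-adjoint, commutes with left translations, commutes with the $U(n)$-action (since $\Delta_{b}^{0}$, $T^{0}$ and $\Pi^{0}$ do), satisfies $(Q^{0})^{2} = \wtP^{0}$, and is homogeneous of degree $n+1$ under $\dilation_{r}$.

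\textbf{Step 3: membership in $\Hpsido^{n+1}$.} This is the main obstacle: one must show that $Q^{0}$ preserves $\Schwartz_{0}(G)$, so that it lies in $\Hpsido^{n+1}$. Granting that, the last proposition of \cref{section:model-operators-on-the-Heisenberg-group} gives $Q^{0} = O^{0}(q^{0})$. Then $q^{0}$ is real-valued by \cref{thm:adjoint-and-composition-of-model-operators}(i), $U(n)$-invariant by uniqueness of the symbol, and satisfies $q^{0} \ast^{0} q^{0} = \sigma^{0}_{2n+2}(\wtP^{0})$ by part (ii).

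To carry out Step 3, I would write
\[
  \sqrt{\tilde p} = \bigl(\abs{\tau}(2k+n)\bigr)^{(n+1)/2}\, m(k,\operatorname{sgn}\tau),
\]
where $m$ is a bounded multiplier. For large $k$, $m$ admits an asymptotic expansion in powers of $1/k$ with separate coefficients for $\pm\tau$; this is a classical symbol in $k$ and depends only on the sign of $\tau$. I would then invoke the known results on joint functional calculus on the Heisenberg group (Müller–Ricci–Stein type, or Beals–Greiner's treatment of $\Delta_{b}^{0} + \sqrt{-1}\alpha T^{0}$, via their inverses and projections). These imply that such a multiplier of $(\Delta_{b}^{0}, D)$ is a convolution operator whose kernel is homogeneous and smooth away from $0$, i.e. lies in $\scrK_{-(n+1)-2n-2}$ (up to log terms).

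Alternatively, one can avoid the general theory. Write $Q^{0}$ as a finite sum: a main part that is an explicit function of $\Delta_{b}^{0}$ and $D$, plus $\Pi^{0}$-corrections, plus a remainder of arbitrarily negative order in $(\Delta_{b}^{0})^{-1}$. Each term is then generated by known elements of $\Hpsido$, namely $\Pi^{0}$, $S^{0}$, the inverses of $L^{0}_{\mu}$ for $\mu \notin \pm(n+2\bbZ_{\geq0})$, and their powers. These elements act on $\Schwartz_{0}$ because their Fourier-side kernels are smooth away from $0$ and the multipliers vanish to infinite order appropriately.
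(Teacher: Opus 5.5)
Your set-up is the right one. The square root $\kappa(\Delta_{b}^{0}, -\sqrt{-1}T^{0})$ from the joint functional calculus is exactly the operator the paper uses. Once it is known to lie in $\Hpsido^{n+1}$, your deduction of reality, $U(n)$-invariance and $q^{0}\ast^{0}q^{0}=\sigma^{0}_{2n+2}(\wtP^{0})$ is fine.

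\textbf{The gap is Step~3.} It is the entire analytic content of the theorem, and the criterion you propose for it is false as stated. A multiplier $(\abs{\tau}(2k+n))^{(n+1)/2}m(k,\operatorname{sgn}\tau)$ with \emph{separate} expansions for $\tau>0$ and $\tau<0$ need not give a kernel smooth away from the origin. Take $m=\mathbf{1}_{\{\tau>0\}}$, i.e.\ $(\Delta_{b}^{0})^{(n+1)/2}$ composed with the spectral projection of $-\sqrt{-1}T^{0}$ onto $(0,\infty)$. That projection has kernel $\delta_{0}(z)\otimes h(t)$, with $h$ the inverse Fourier transform of a Heaviside function in $\tau$. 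Its would-be symbol is discontinuous at $\tau=0$, and neither it nor the composite lies in the model calculus.

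The M\"uller--Ricci--Stein theory you cite does not help here. It is an $L^{p}$ theory for Marcinkiewicz-type multipliers, whose kernels are of flag type and in general singular along the whole center.

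\textbf{The missing condition.} What is needed is that the multiplier, viewed on the joint spectrum $\Sigma\subset\bbR^{2}$, extend to a smooth $d_{r}$-homogeneous function on $\bbR^{2}\setminus\{0\}$, in particular smoothly across the ray $\Sigma_{2}=\{\lambda=0\}$. Here it does, for two reasons:
\begin{itemize}
\item $\prod_{l=0}^{n}(\mu-(n-2l)\abs{\lambda})=\prod_{l=0}^{n}(\mu-(n-2l)\lambda)$ is a polynomial, positive on $\mu>n\abs{\lambda}$. In your notation, $m$ is independent of $\operatorname{sgn}\tau$ and even in $(2k+n)^{-1}$.
\item The rays $\mu=n\abs{\lambda}$, on which $\Pi^{0}$ changes the multiplier, are isolated on the unit circle.
\end{itemize}
So one can choose a smooth positive homogeneous extension $\pi$ and set $\kappa=\sqrt{\pi}$. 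Your proposal never isolates this point.

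\textbf{The missing kernel argument.} Even granting the extension, you still need to prove that a smooth homogeneous multiplier has a homogeneous kernel smooth off the origin. The paper does this with a dyadic decomposition $\kappa=\sum_{j}\kappa(\xi)\chi(2^{-2j}\xi)$:
\begin{itemize}
\item each piece is a rescaling $2^{(3n+3)j}K_{\eta}\circ\dilation_{2^{j}}$ of a single kernel $K_{\eta}$, which is Schwartz by the multiplier theorem of Astengo--Di~Blasio--Ricci;
\item the series converges in $\Schwartz^{\prime}(G)$ and in $C^{\infty}_{\mathrm{loc}}(G\setminus\{0\})$, using $K_{\eta}=\calR^{n+1}K_{\nu}$ for $j\geq 0$;
\item this gives $K_{\kappa}\in\scrK_{-3n-3}$.
\end{itemize}

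Your alternative route cannot replace this. On the model group every term is homogeneous of the same degree $n+1$. A truncated expansion in powers of $D/\Delta_{b}^{0}$ therefore leaves a remainder of the same homogeneity, whose multiplier is only finitely differentiable across $\lambda=0$. Such a remainder is neither lower order nor in $\Hpsido^{n+1}$. Since the identity $\sigma^{0}_{2n+2}(\wtP^{0})=q^{0}\ast^{0}q^{0}$ must hold exactly, there is nothing to absorb it into.
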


The remainder of this section is devoted to the proof of
\cref{thm:square-root-of-critical-CR-GJMS-operator}.
To simplify notation, we omit the superscript $0$ for objects on $G$.

The operators $\Delta_{b}$ and $- \sqrt{- 1} T$,
initially defined on $\Schwartz(G)$,
are essentially self-adjoint,
and their self-adjoint closures strongly commute.
We denote these closures by the same symbols.
Their joint spectrum is the subset $\Sigma = \Sigma_{1} \cup \Sigma_{2}$ of $\bbR^{2}$,
where
\begin{equation}
	\Sigma_{1}
	\coloneqq \Set{((2 k + n) \abs{\lambda}, \lambda) | k \in \bbZ_{\geq 0}, \lambda \in \bbR^{\times}}
\end{equation}
and
\begin{equation}
	\Sigma_{2}
	\coloneqq \Set{(\mu, 0) | \mu \in \clop{0}{\infty}}.
\end{equation}
Define the dilation $d_{r} \colon \bbR^{2} \to \bbR^{2}$ by
\begin{equation}
	d_{r}(\mu, \lambda) \coloneqq (r^{2} \mu, r^{2} \lambda)
\end{equation}
for any $r \in \bbR_{> 0}$.
Note that $\Sigma_{1}$ and $\Sigma_{2}$ are invariant under this dilation.
Denote by $\calE$ the joint resolution of the identity of $\Delta_{b}$ and $- \sqrt{- 1} T$,
whose support coincides with $\Sigma$.
For $f, g \in L^{2}(G)$,
let $\calE_{f, g}$ denote the complex Borel measure on $\bbR^{2}$
defined by $\calE_{f, g}(E) = (\calE(E) f, g)_{L^{2}}$.
If $\eta$ is a Borel function on $\Sigma$,
then we can define a possibly unbounded operator on $L^{2}(G)$ by
\begin{equation}
	\eta(\Delta_{b}, - \sqrt{- 1} T)
	\coloneqq \int_{\Sigma} \eta(\mu, \lambda) \, d \calE(\mu, \lambda),
\end{equation}
whose domain is given by
\begin{equation}
	\Set{f \in L^{2}(G) | \int_{\Sigma} \abs{\eta(\mu, \lambda)}^{2}
		\, d \calE_{f, f}(\mu, \lambda) < \infty}.
\end{equation}
Set $\calR \coloneqq \Delta_{b}^{2} - T^{2}$;
this satisfies
\begin{equation}
	\calR
	= \int_{\Sigma} (\mu^{2} + \lambda^{2}) \, d \calE(\mu, \lambda).
\end{equation}

\begin{lemma}
\label{lem:domain-of-operators}
	Assume that there exist $C > 0$ and $N \in \bbZ_{\geq 0}$ such that
	$\abs{\eta(\mu, \lambda)} \leq C (1 + \mu^{2} + \lambda^{2})^{N}$.
	Then the domain of $\eta(\Delta_{b}, - \sqrt{- 1} T)$ contains $\Schwartz(G)$.
\end{lemma}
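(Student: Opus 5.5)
We reduce the domain condition to a single integrability estimate. For $f \in \Schwartz(G)$, the domain condition reads
\begin{equation}
	\int_{\Sigma} \abs{\eta(\mu, \lambda)}^{2} \, d \calE_{f, f}(\mu, \lambda) < \infty .
\end{equation}
By the assumed polynomial bound, it suffices to show that
\begin{equation}
	\int_{\Sigma} (1 + \mu^{2} + \lambda^{2})^{2 N} \, d \calE_{f, f}(\mu, \lambda) < \infty .
\end{equation}
Since $\calE_{f, f}$ is a finite positive measure of total mass $\norm{f}_{L^{2}}^{2}$, the integrand $(1 + \mu^{2} + \lambda^{2})^{2N}$ is controlled by the functional calculus of $1 + \calR$, where $\calR = \int_{\Sigma} (\mu^{2} + \lambda^{2}) \, d \calE$.

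The key step is to identify this integral with a norm of a differential operator applied to $f$. By the spectral theorem for the strongly commuting pair, the integral equals $\norm{(1 + \calR)^{N} f}_{L^{2}}^{2}$, provided $f \in \Dom (1 + \calR)^{N}$. So we must check that $\Schwartz(G)$ lies in the domain of $(1 + \calR)^{N}$ defined through the joint functional calculus, and that on $\Schwartz(G)$ this operator agrees with the differential operator $(1 + \Delta_{b}^{2} - T^{2})^{N}$.

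We argue inductively via polynomials in $\Delta_{b}$ and $- \sqrt{- 1} T$. Since $\Delta_{b}$ and $- \sqrt{- 1} T$ are the self-adjoint closures of their restrictions to $\Schwartz(G)$, and $\Schwartz(G)$ is invariant under both differential operators (left-invariant operators with polynomial coefficients preserve $\Schwartz(G)$), the following hold for $f \in \Schwartz(G)$:
\begin{itemize}
	\item $f$ lies in the domain of each closure;
	\item the functional-calculus operator $\int \mu \, d\calE$ agrees with the differential operator $\Delta_{b}$ on $f$, and likewise for $\lambda$ and $- \sqrt{- 1} T$;
	\item the image again lies in $\Schwartz(G)$.
\end{itemize}
We then use the standard fact that for a Borel function $\phi$ and $f \in \Dom \phi(\cdot)$ with $\phi(\cdot) f \in \Dom \mu$ (say), one has $f \in \Dom (\mu \phi)$ and $(\mu\phi)(\Delta_{b}, - \sqrt{- 1} T) f = \Delta_{b} \, \phi(\Delta_{b}, - \sqrt{- 1} T) f$. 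This is the multiplicativity of the functional calculus, $\Dom(\phi\psi)\cap\Dom\psi = \Dom(\phi(\cdot)\psi(\cdot))$. Applying it repeatedly to monomials $\mu^{a} \lambda^{b}$, we obtain by induction on $a + b$ that
\begin{equation}
	\int_{\Sigma} \abs{\mu^{a} \lambda^{b}}^{2} \, d\calE_{f, f} = \norm{\Delta_{b}^{a} (- \sqrt{-1} T)^{b} f}_{L^{2}}^{2} < \infty .
\end{equation}
Expanding $(1 + \mu^{2} + \lambda^{2})^{2N}$, or simply bounding it by a constant times $\sum_{a + b \leq 2N} \mu^{2a} \lambda^{2b}$, then gives finiteness.

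The main point requiring care is the domain compatibility in this multiplicativity step. Once each single-variable step is justified by the definition of the closure and the invariance of $\Schwartz(G)$, the rest is bookkeeping.
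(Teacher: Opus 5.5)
Your proposal is correct and follows the same route as the paper: you use the polynomial bound on $\eta$ to reduce to finiteness of $\int_{\Sigma} (1 + \mu^{2} + \lambda^{2})^{2N} \, d\calE_{f,f}$, which holds because $f \in \Schwartz(G)$ lies in the domain of the corresponding polynomial in $\Delta_{b}$ and $-\sqrt{-1}T$. The paper simply asserts $(1 + \calR)^{N} f \in L^{2}(G)$; you additionally justify, via the multiplicativity of the functional calculus applied to monomials, that the functional-calculus operator agrees with the differential operator on $\Schwartz(G)$, which is a correct elaboration of that step.
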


\begin{proof}
	Take any $f \in \Schwartz(G)$.
	Since $(1 + \calR)^{N} f \in L^{2}(G)$,
	we have
	\begin{equation}
		\int_{\Sigma} (1 + \mu^{2} + \lambda^{2})^{2 N} \, d \calE_{f, f}(\mu, \lambda) < \infty.
	\end{equation}
	This implies
	\begin{equation}
		\int_{\Sigma} \abs{\eta(\mu, \lambda)}^{2} \, d \calE_{f, f}(\mu, \lambda)
		\leq C^{2} \int_{\Sigma} (1 + \mu^{2} + \lambda^{2})^{2 N} \, d \calE_{f, f}(\mu, \lambda) < \infty,
	\end{equation}
	which means that $f$ is an element of the domain of $\eta(\Delta_{b}, - \sqrt{- 1} T)$.
\end{proof}

In what follows,
we assume that $\eta$ satisfies the assumption of \cref{lem:domain-of-operators}.
The preceding estimate shows that $\eta(\Delta_{b}, - \sqrt{- 1} T)$ is a continuous map
from $\Schwartz(G)$ to $L^{2}(G)$.
Since it commutes with left translations,
it has a unique convolution kernel $K_{\eta} \in \Schwartz^{\prime}(G)$.
Moreover,
the operator $\eta(\Delta_{b}, - \sqrt{- 1} T)$ is $U(n)$-equivariant;
this follows from $\comm{\Delta_{b}}{\mu(U)^{\ast}} = \comm{T}{\mu(U)^{\ast}} = 0$ for any $U \in U(n)$.

\begin{lemma}
\label{lem:homogeneity-of-convolution-kernel}
	The convolution kernel $K_{\eta \circ d_{r}}$ of $(\eta \circ d_{r})(\Delta_{b}, - \sqrt{- 1} T)$
	coincides with $r^{- 2 n - 2} \dilation_{1 / r}^{\ast} K_{\eta}$.
\end{lemma}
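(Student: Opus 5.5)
The plan is to compare the two operators through conjugation by the dilation, using only how $\Delta_{b}$ and $- \sqrt{- 1} T$ transform under $\dilation_{r}$ and the uniqueness of functional calculus. Write $D_{r} f \coloneqq \dilation_{r}^{\ast} f = f \circ \dilation_{r}$. Since $Z_{\alpha}$ is homogeneous of degree $1$ and $T$ of degree $2$, on $\Schwartz(G)$ we have
\begin{equation}
	\Delta_{b} D_{r} = r^{2} D_{r} \Delta_{b},
	\qquad
	(- \sqrt{- 1} T) D_{r} = r^{2} D_{r} (- \sqrt{- 1} T).
\end{equation}
The operator $U_{r} \coloneqq r^{n + 1} D_{r}$ is unitary on $L^{2}(G)$, because $\dilation_{r}$ scales Lebesgue measure by $r^{2 n + 2}$. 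The identities above therefore say
\begin{equation}
	U_{r}^{- 1} \Delta_{b} U_{r} = r^{2} \Delta_{b},
	\qquad
	U_{r}^{- 1} (- \sqrt{- 1} T) U_{r} = r^{2} (- \sqrt{- 1} T),
\end{equation}
first on the core $\Schwartz(G)$, which $U_{r}$ preserves, and then for the self-adjoint closures. Consequently the joint resolution of the identity transforms as $U_{r}^{- 1} \calE(E) U_{r} = \calE(d_{r}^{- 1}(E))$, since both sides form a joint spectral measure for the pair $(r^{2} \Delta_{b}, r^{2}(- \sqrt{- 1} T))$ and such a measure is unique. Integrating $\eta$ against it gives
\begin{equation}
	U_{r}^{- 1} \eta(\Delta_{b}, - \sqrt{- 1} T) U_{r}
	= (\eta \circ d_{r})(\Delta_{b}, - \sqrt{- 1} T),
\end{equation}
with equal domains. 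The scalar factors in $U_{r}$ cancel, so this reads $(\eta \circ d_{r})(\Delta_{b}, - \sqrt{- 1} T) = D_{r}^{- 1} \eta(\Delta_{b}, - \sqrt{- 1} T) D_{r}$ on $\Schwartz(G)$.

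Next I translate this into a statement about kernels. For $f \in \Schwartz(G)$,
\begin{equation}
	(\eta \circ d_{r})(\Delta_{b}, - \sqrt{- 1} T) f = (K_{\eta} \ast (f \circ \dilation_{r})) \circ \dilation_{1 / r}.
\end{equation}
To evaluate the right-hand side, I use that $\dilation_{r}$ is a group automorphism, so it commutes with $\ltrans$ and $\inversion$ in the sense $\ltrans_{v} \circ \inversion \circ \dilation_{r} = \dilation_{r} \circ \ltrans_{\dilation_{1/r} v} \circ \inversion$. Substituting into the pairing $\coupling{K_{\eta}}{f \circ \dilation_{r} \circ \ltrans_{\dilation_{1/r} v} \circ \inversion}$ and applying the change-of-variables rule $\coupling{k}{g \circ \dilation_{r}} = r^{- 2 n - 2} \coupling{\dilation_{1/r}^{\ast} k}{g}$ yields
\begin{equation}
	(K_{\eta} \ast (f \circ \dilation_{r})) \circ \dilation_{1 / r} = r^{- 2 n - 2} (\dilation_{1/r}^{\ast} K_{\eta}) \ast f.
\end{equation}
The change-of-variables rule holds for test functions by the Jacobian $r^{2 n + 2}$ and extends to tempered distributions by definition of the pull-back.

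Finally, the convolution kernel of a continuous left-invariant operator $\Schwartz(G) \to L^{2}(G) \subset \Schwartz^{\prime}(G)$ is unique, as already noted before the lemma. Comparing the two expressions therefore gives $K_{\eta \circ d_{r}} = r^{- 2 n - 2} \dilation_{1/r}^{\ast} K_{\eta}$. I also need $\eta \circ d_{r}$ to satisfy the polynomial bound of \cref{lem:domain-of-operators} so that $K_{\eta \circ d_{r}}$ is defined; this holds with a constant depending on $r$.

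The main obstacle I expect is the rigorous passage from the commutation relations on $\Schwartz(G)$ to the transformation law of the joint spectral measure. For this I would use that each of $U_{r}^{- 1} \Delta_{b} U_{r}$ and $r^{2} \Delta_{b}$ is the closure of its restriction to $\Schwartz(G)$, by essential self-adjointness, and likewise for $- \sqrt{- 1} T$; the unitarily conjugated joint spectral measure is then the joint spectral measure of the dilated pair. The remaining work is careful bookkeeping of the powers of $r$, which must produce exactly $r^{- 2 n - 2}$.
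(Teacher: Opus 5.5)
Your proposal is correct and follows essentially the same route as the paper: conjugate by the unitary dilation $r^{n+1}\dilation_{r}^{\ast}$ to obtain $(\eta \circ d_{r})(\Delta_{b}, -\sqrt{-1} T) = D_{r}^{-1} \eta(\Delta_{b}, -\sqrt{-1} T) D_{r}$. Then rewrite the pairing using that $\dilation_{r}$ is a group automorphism together with the Jacobian $r^{2n+2}$, and conclude by uniqueness of the convolution kernel. Your justification of the conjugation identity via essential self-adjointness and uniqueness of the joint spectral measure fills in a step the paper states without comment.
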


\begin{proof}
	Let $D_{r}$ denote the unitary operator $r^{n + 1} \dilation_{r}^{\ast}$ on $L^{2}(G)$.
	Since $\Delta_{b} D_{r} = r^{2} D_{r} \Delta_{b}$ and $T D_{r} = r^{2} D_{r} T$,
	we have
	\begin{equation}
		(\eta \circ d_{r})(\Delta_{b}, - \sqrt{- 1} T)
		= D_{r}^{- 1} \eta(\Delta_{b}, - \sqrt{- 1} T) D_{r}.
	\end{equation}
	Take $f \in \Schwartz(G)$.
	Then
	\begin{align}
		(D_{r}^{- 1} \eta(\Delta_{b}, - \sqrt{- 1} T) D_{r} f)(v)
		&= r^{- n - 1} (\eta(\Delta_{b}, - \sqrt{- 1} T) D_{r} f)(\dilation_{1 / r} v) \\
		&= \coupling{K_{\eta}}{f \circ \dilation_{r} \circ l_{\dilation_{1 / r} v} \circ \inversion} \\
		&= \coupling{K_{\eta}}{f \circ l_{v} \circ \inversion \circ \dilation_{r}} \\
		&= \coupling{r^{- 2 n - 2} \dilation_{1 / r}^{\ast} K_{\eta}}{f \circ l_{v} \circ \inversion}.
	\end{align}
	The uniqueness of the convolution kernel implies
	\begin{equation}
		K_{\eta \circ d_{r}} = r^{- 2 n - 2} \dilation_{1 / r}^{\ast} K_{\eta},
	\end{equation}
	which completes the proof.
\end{proof}

Now consider the operator $\wtP$.
Note that $\wtP = \pi(\Delta_{b}, - \sqrt{- 1} T)$,
where
\begin{equation}
	\pi(\mu, \lambda)
	\coloneqq
	\begin{cases}
		2^{- n - 1} \prod_{l = 0}^{n} (\mu - (n - 2 l) \abs{\lambda}) & \mu \neq n \abs{\lambda}, \\
		\mu^{n + 1} & \mu = n \abs{\lambda}
	\end{cases}
\end{equation}
for $(\mu, \lambda) \in \Sigma$.
The projection $\Pi = S + \ovS$ selects the lowest spectral rays $\mu = n \abs{\lambda}$,
on which the multiplier of $\GJMS$ vanishes.
Thus the correction term $\Pi \Delta_{b}^{n + 1} \Pi$ replaces
the zero multiplier on these rays by $\mu^{n + 1}$.
We extend $\pi|_{\Sigma \setminus \{0\}}$ to a smooth positive function on $\bbR^{2} \setminus \{0\}$, homogeneous of degree $2 (n + 1)$ with respect to $d_{r}$,
and denote the extension by the same symbol $\pi$.
The lowest spectral rays $\mu = n \abs{\lambda}$ are separated
on the unit circle from the remaining spectral directions,
which satisfy $\mu \geq (n + 2) \abs{\lambda}$.
Since the polynomial multiplier is strictly positive near the latter set,
a smooth positive homogeneous extension can be constructed using a partition of unity on the unit circle.
Set $\kappa(\mu, \lambda) \coloneqq \sqrt{\pi(\mu, \lambda)}$,
so that $\kappa \circ d_{r} = r^{n + 1} \kappa$.
We extend $\kappa$ continuously to all of $\bbR^{2}$ by $\kappa(0) = 0$.

\begin{proposition}
\label{prop:convolution-kernel-of-square-root}
	The convolution kernel $K_{\kappa}$ of $\kappa(\Delta_{b}, - \sqrt{- 1} T)$
	is homogeneous of degree $- 3 n - 3$
	and smooth outside the origin.
\end{proposition}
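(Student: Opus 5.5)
Homogeneity follows at once from \cref{lem:homogeneity-of-convolution-kernel}: since $\kappa \circ d_{r} = r^{n + 1} \kappa$, we have $K_{\kappa \circ d_{r}} = r^{n + 1} K_{\kappa}$. The lemma gives $K_{\kappa \circ d_{r}} = r^{-2n-2} \dilation_{1/r}^{\ast} K_{\kappa}$. Hence $\dilation_{1/r}^{\ast} K_{\kappa} = r^{3n+3} K_{\kappa}$, that is, $\dilation_{r}^{\ast} K_{\kappa} = r^{-3n-3} K_{\kappa}$. The lemma applies because $\abs{\kappa(\mu,\lambda)} \leq C(1+\mu^{2}+\lambda^{2})^{N}$ with $N = n+1$, say.

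For smoothness away from the origin, I would compare $\kappa$ with a power of $\calR$ using elliptic-type regularity on $G$. The plan is to show that $\Delta_{b}^{j} T^{k} K_{\kappa}$ (more generally $X K_{\kappa}$ for any left-invariant differential operator $X$) is locally in $L^{2}$ away from $0$; then hypoellipticity of $\Delta_{b}$ (Hörmander's theorem) gives smoothness.

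Concretely, take a large integer $N$ and split $\kappa = \kappa \chi + \kappa(1-\chi)$, where $\chi$ is a cutoff on $\bbR^{2}$ equal to $1$ near $0$. The operator $(\kappa\chi)(\Delta_{b}, -\sqrt{-1}T)$ is bounded, and in fact maps $L^{2}$ into the domain of every power of $\calR$. For the high part, write $\kappa(1-\chi) = \calR^{-M}\cdot \kappa(1-\chi)\calR^{M}$.

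A cleaner route uses the homogeneity just proved. Away from the origin, smoothness is equivalent to smoothness on the unit sphere $\rho = 1$. By homogeneity, $K_{\kappa}$ restricted there equals, up to dilation, a superposition of kernels of $\kappa_{j}$, where $\kappa = \sum_{j} \kappa_{j}$ is a dyadic decomposition $\kappa_{j} = \kappa\,\phi(2^{-2j}\,\cdot)$. Each $\kappa_{j}$ is bounded and compactly supported in $\Sigma$. Its kernel is the dilate of a single kernel $K_{\kappa\phi}$ with factor $2^{j(3n+3)}$. The kernel $K_{\kappa\phi}$ is a Schwartz function, because $\Schwartz(G)$ is preserved by spectral multipliers that are smooth and compactly supported away from $0$. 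This is Hulanicki's theorem, extended to the joint calculus of $\Delta_{b}$ and $-\sqrt{-1}T$, which is a Rockland system. Summing the dilates then gives convergence in $C^{\infty}$ on compact sets away from $0$: for $j \to +\infty$ the Schwartz decay wins, and for $j \to -\infty$ the factor $2^{j(3n+3)}$ together with uniform derivative bounds gives geometric convergence.

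The main obstacle is justifying that $K_{\kappa\phi} \in \Schwartz(G)$. Here $\kappa\phi$ is smooth on $\bbR^{2}$ and compactly supported away from the origin, which is exactly what the joint-calculus Hulanicki theorem (Veneruso, Martini) requires. The smooth positive extension of $\pi$ ensures that $\kappa = \sqrt{\pi}$ is smooth on $\bbR^{2} \setminus \{0\}$.
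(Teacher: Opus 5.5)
Your homogeneity argument is the paper's, and your smoothness strategy is also the paper's: dyadic pieces $\kappa_{j} = \kappa\,\phi(2^{-2j}\,\cdot)$, a Schwartz kernel for $\kappa\phi$ from the joint multiplier theorem, the scaling $K_{\kappa_{j}} = 2^{(3n+3)j} K_{\kappa\phi}\circ\dilation_{2^{j}}$, and geometric convergence of derivatives on compact subsets of $G\setminus\{0\}$. The gap is that you never show that $\sum_{j} K_{\kappa_{j}}$ converges to $K_{\kappa}$. Your sentence ``by homogeneity, $K_{\kappa}$ \dots\ equals \dots\ a superposition of kernels of $\kappa_{j}$'' assumes it, and homogeneity plays no role in that identity.

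The $C^{\infty}_{\mathrm{loc}}(G\setminus\{0\})$ convergence of the partial sums only produces some smooth $F$ on $G\setminus\{0\}$. You still need $K_{\kappa} = F$ there, and this does not come for free. Set $\kappa^{(N)} \coloneqq \sum_{\abs{j}\leq N}\kappa_{j}$. The pointwise identity $\kappa = \sum_{j}\kappa_{j}$ gives, by dominated convergence in the spectral calculus, only $\kappa^{(N)}(\Delta_{b}, -\sqrt{-1}T) f \to \kappa(\Delta_{b}, -\sqrt{-1}T) f$ in $L^{2}(G)$ for $f \in \Schwartz(G)$. That is a statement about operators, not kernels. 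Moreover, the high-frequency kernels do not converge near $0$ in any naive sense, since $\norm{K_{\kappa_{j}}}_{L^{1}} = 2^{(n+1)j}\norm{K_{\kappa\phi}}_{L^{1}}$ grows. This is unavoidable, because $K_{\kappa}$ is homogeneous of degree $-3n-3$ and hence not locally integrable.

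The paper supplies this step by proving convergence in $\Schwartz'(G)$. For $j<0$ it uses $\abs{\coupling{K_{\kappa_{j}}}{f}} \leq 2^{(3n+3)j}\norm{K_{\kappa\phi}}_{L^{\infty}}\norm{f}_{L^{1}}$. For $j \geq 0$ it writes $K_{\kappa\phi} = \calR^{n+1} K_{\nu}$ with $\nu = \kappa\phi/(\mu^{2}+\lambda^{2})^{n+1} \in C^{\infty}_{c}(\bbR^{2}\setminus\{0\})$, and integrates by parts to get $\abs{\coupling{K_{\kappa_{j}}}{f}} \leq 2^{-(n+1)j}\norm{K_{\nu}}_{L^{\infty}}\norm{\calR^{n+1}f}_{L^{1}}$. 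The $\Schwartz'$-limit $K$ then satisfies $K \ast f = \lim K_{\kappa^{(N)}} \ast f = K_{\kappa} \ast f$, the first limit taken in $\Schwartz'(G)$ and the second in $L^{2}(G)$. Uniqueness of convolution kernels then gives $K = K_{\kappa}$.

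A local variant also works. For $f$ supported near $0$ and $v$ near $v_{0} \neq 0$, $(K \ast f)(v)$ only involves $K$ near $v_{0}$. So your uniform convergence together with the $L^{2}$ convergence gives $K_{\kappa} \ast f = F \ast f$ near $v_{0}$, and an approximate identity then yields $K_{\kappa} = F$ near $v_{0}$. One of these arguments must be added. The hypoellipticity sketch in your third paragraph can simply be dropped.
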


\begin{proof}
	Since $\kappa \circ d_{r} = r^{n + 1} \kappa$ for any $r > 0$,
	it follows from \cref{lem:homogeneity-of-convolution-kernel} that
	\begin{equation}
		\dilation_{r}^{\ast} K_{\kappa}
		= r^{- 2 n - 2} K_{\kappa \circ d_{1 / r}}
		= r^{- 3 n - 3} K_{\kappa},
	\end{equation}
	which means that $K_{\kappa}$ is homogeneous of degree $- 3 n - 3$.
	
	It remains to show that $K_{\kappa}$ is smooth outside the origin.
	To this end,
	we construct a sequence $\gamma_{N} \in \Schwartz(G)$
	such that $(\gamma_{N})_{N = 0}^{\infty}$ converges to $K_{\kappa}$ in $\Schwartz^{\prime}(G)$
	and in $C^{\infty}_{\mathrm{loc}}(G \setminus \{0\})$.
	Take a non-negative $\chi \in C^{\infty}_{c}(\bbR^{2} \setminus \{0\})$ so that
	\begin{equation}
		\sum_{j \in \bbZ} \chi(2^{- 2 j} \xi) = 1
	\end{equation}
	for any $\xi \in \bbR^{2} \setminus \{0\}$.
	Set $\eta(\xi) \coloneqq \kappa(\xi) \chi(\xi) \in C^{\infty}_{c}(\bbR^{2} \setminus \{0\})$ and
	$k \coloneqq K_{\eta}$.
	Since $\eta \in C^{\infty}_{c}(\bbR^{2})$,
	the Schwartz multiplier theorem for the joint functional calculus of $\Delta_{b}$ and $- \sqrt{- 1} T$
	implies that $k \in \Schwartz(G)$;
	see \cite{Astengo-Di_Blasio-Ricci2009}*{Corollary 5.4}.
	Consider
	\begin{equation}
		\eta_{j}(\xi)
		\coloneqq \kappa(\xi) \chi(2^{- 2 j} \xi)
		= 2^{(n + 1) j} \kappa(2^{- 2 j} \xi) \chi(2^{- 2 j} \xi)
		= 2^{(n + 1) j} \eta(2^{- 2 j} \xi).
	\end{equation}
	This formula and \cref{lem:homogeneity-of-convolution-kernel} imply that
	the convolution kernel $k_{j}$ of $\eta_{j}(\Delta_{b}, - \sqrt{- 1} T)$
	coincides with $2^{(3 n + 3) j} k \circ \dilation_{2^{j}}$.
	Define
	\begin{equation}
		\gamma_{N}
		\coloneqq \sum_{j = - N}^{N} k_{j}
		= \sum_{j = - N}^{N} 2^{(3 n + 3) j} k \circ \dilation_{2^{j}}
		\in \Schwartz(G).
	\end{equation}
	
	We first show that $(\gamma_{N})_{N = 0}^{\infty}$ converges in $\Schwartz^{\prime}(G)$.
	Take $f \in \Schwartz(G)$.
	If $j < 0$,
	then
	\begin{equation}
		\abs{\coupling{k_{j}}{f}}
		= 2^{(3 n + 3) j} \abs*{\int_{G} k \circ \dilation_{2^{j}}(v) f(v) \, d v}
		\leq 2^{(3 n + 3) j} \norm{k}_{L^{\infty}} \norm{f}_{L^{1}}.
	\end{equation}
	This implies that $\sum_{l = 1}^{\infty} k_{- l}$ converges in $\Schwartz^{\prime}(G)$.
	If $j \geq 0$,
	consider
	\begin{equation}
		\nu(\mu, \lambda)
		\coloneqq \frac{\eta(\mu, \lambda)}{(\mu^{2} + \lambda^{2})^{n + 1}}
		\in C^{\infty}_{c}(\bbR^{2} \setminus \{0\}).
	\end{equation}
	The convolution kernel $k^{\prime} \coloneqq K_{\nu} \in \Schwartz(G)$ satisfies
	$k = \calR^{n + 1} k^{\prime}$.
	Then
	\begin{align}
		k_{j}
		&= 2^{(3 n + 3) j} k \circ \dilation_{2^{j}} \\
		&= 2^{(3 n + 3) j} (\calR^{n + 1} k^{\prime}) \circ \dilation_{2^{j}} \\
		&= 2^{- (n + 1) j} \calR^{n + 1} (k^{\prime} \circ \dilation_{2^{j}}).
	\end{align}
	Integration by parts yields
	\begin{align}
		\abs{\coupling{k_{j}}{f}}
		&= 2^{- (n + 1) j} \abs*{\int_{G} k^{\prime} \circ \dilation_{2^{j}}(v) (\calR^{n + 1} f)(v) \, d v} \\
		&\leq 2^{- (n + 1) j} \norm{k^{\prime}}_{L^{\infty}} \norm{\calR^{n + 1} f}_{L^{1}},
	\end{align}
	which implies that $\sum_{j = 0}^{\infty} k_{j}$ converges in $\Schwartz^{\prime}(G)$.
	Therefore $\gamma_{N} = \sum_{j = - N}^{N} k_{j}$ also converges
	to some $K$ in $\Schwartz^{\prime}(G)$.
	
	We next show that the limit $K$ of $(\gamma_{N})_{N = 0}^{\infty}$ coincides with $K_{\kappa}$.
	To this end,
	consider
	\begin{equation}
		\kappa_{N}(\xi)
		= \sum_{j = - N}^{N} \eta_{j}(\xi)
		= \kappa(\xi) \sum_{j = - N}^{N} \chi(2^{- 2 j} \xi).
	\end{equation}
	Note that $\gamma_{N} = K_{\kappa_{N}}$.
	Since $0 \leq \kappa_{N} \leq \kappa$
	and $(\kappa_{N})_{N = 0}^{\infty}$ converges to $\kappa$ pointwise,
	the dominated convergence theorem implies
	\begin{equation}
		\norm{(\kappa - \kappa_{N})(\Delta_{b}, - \sqrt{- 1} T) f}_{L^{2}}^{2}
		= \int_{\Sigma} \abs{\kappa(\xi) - \kappa_{N}(\xi)}^{2} \, d \calE_{f, f}(\xi)
		\to 0
	\end{equation}
	as $N \to \infty$ for any $f \in \Schwartz(G)$.
	Thus we have
	\begin{equation}
		\gamma_{N} \ast f = \kappa_{N}(\Delta_{b}, - \sqrt{- 1} T) f
		\to \kappa(\Delta_{b}, - \sqrt{- 1} T) f = K_{\kappa} \ast f
		\quad \text{ in } L^{2}(G).
	\end{equation}
	On the other hand,
	$\gamma_{N} \to K$ in $\Schwartz^{\prime}(G)$ gives
	\begin{equation}
		\gamma_{N} \ast f \to K \ast f
		\quad \text{ in } \Schwartz^{\prime}(G).
	\end{equation}
	By uniqueness of the convolution kernel,
	$K = K_{\kappa}$.
	
	It remains to prove that $\gamma_{N}$ converges to $K_{\kappa}$
	in $C^{\infty}_{\mathrm{loc}}(G \setminus \{0\})$.
	Fix a left-invariant differential operator $D$ on $G$ homogeneous of degree $d$.
	Since $D k \in \Schwartz(G)$,
	there exists a positive constant $C$ such that
	\begin{equation}
		\abs{(D k)(v)} \leq C (1 + \rho(v))^{- 4 n - 4 - d}.
	\end{equation}
	It follows from the definition of $k_{j}$ that
	\begin{equation}
		D k_{j}
		= 2^{(3 n + 3 + d) j} (D k) \circ \dilation_{2^{j}},
	\end{equation}
	which implies
	\begin{equation}
		\abs{(D k_{j})(v)}
		\leq C 2^{(3 n + 3 + d) j} (1 + 2^{j} \rho(v))^{- 4 n - 4 - d}.
	\end{equation}
	Now fix a compact set $E$ in $G \setminus \{0\}$
	and put $\varepsilon \coloneqq \inf_{v \in E} \rho(v) > 0$.
	If $j < 0$,
	then
	\begin{equation}
		\abs{(D k_{j})(v)} \leq C 2^{(3 n + 3 + d) j}.
	\end{equation}
	It follows that $\sum_{l = 1}^{\infty} D k_{- l}$ converges uniformly on $E$.
	On the other hand,
	if $j \geq 0$,
	then
	\begin{equation}
		\abs{(D k_{j})(v)}
		\leq C 2^{(3 n + 3 + d) j} (2^{j} \rho(v))^{- 4 n - 4 - d}
		\leq C \varepsilon^{- 4 n - 4 - d} 2^{- (n + 1) j}.
	\end{equation}
	Hence $\sum_{j = 0}^{\infty} D k_{j}$ also converges uniformly on $E$.
	Therefore $\gamma_{N} = \sum_{j = - N}^{N} k_{j}$
	converges to $K_{\kappa}$ in $C^{\infty}_{\mathrm{loc}}(G \setminus \{0\})$.
\end{proof}

\begin{proof}[Proof of \cref{thm:square-root-of-critical-CR-GJMS-operator}]
	It follows from \cref{prop:convolution-kernel-of-square-root} that
	$K_{\kappa} \in \scrK_{- 3 n - 3}$.
	Set $q = \Fourier(K_{\kappa})|_{\frakg^{\ast} \setminus \{0\}} \in \Hsymbol^{n + 1}$.
	By \cref{prop:well-defined-of-Hpsido} and \cref{def:action-of-principal-symnol},
	$O(q)$ coincides with $\kappa(\Delta_{b}, - \sqrt{- 1} T)$ on $\Schwartz_{0}(G)$.
	Since $\kappa(\Delta_{b}, - \sqrt{- 1} T)$ is formally self-adjoint and $U(n)$-equivariant,
	$q$ is real-valued and $U(n)$-invariant.
	Moreover,
	$\wtP = \kappa(\Delta_{b}, - \sqrt{- 1} T)^{2}$ implies $\sigma_{2 n + 2}(\wtP) = q \ast q$.
\end{proof}

\section{Heisenberg calculus and a lower-bound criterion}
\label{section:Heisenberg-calculus-and-lower-bound-criterion}

In this section,
we recall basic properties of Heisenberg pseudodifferential operators;
see~\cites{Beals-Greiner1988,Ponge2008-Book}
for a comprehensive introduction to the Heisenberg calculus.

Throughout this section,
we fix a closed pseudo-Hermitian manifold $(M, T^{1, 0} M, \theta)$ of dimension $2 n + 1$.
Let
\begin{equation}
	\frakg M
	\coloneqq H M \oplus (T M / H M).
\end{equation}
The Reeb vector field $T$ defines a nowhere-vanishing section $[T]$ of $T M / H M$.
For sections $X^{\prime}$ and $Y^{\prime}$ of $H M$
and $X_{0}$ and $Y_{0}$ of $T M / H M$,
the Lie bracket $\comm{X^{\prime} + X_{0}}{Y^{\prime} + Y_{0}}$ is defined by
\begin{equation}
	\comm{X^{\prime} + X_{0}}{Y^{\prime} + Y_{0}}
	\coloneqq - d \theta (X^{\prime}, Y^{\prime}) [T].
\end{equation}
This bracket makes $\frakg M$ a bundle of two-step nilpotent Lie algebras.
The dilation $\dilation_{r}$ on $\frakg M$ is defined by
\begin{equation}
	\dilation_{r}(X^{\prime} + X_{0})
	= r X^{\prime} + r^{2} X_{0}.
\end{equation}
It follows from the definition of the Lie bracket that
$\dilation_{r}$ is a fiberwise Lie algebra isomorphism.
Set $G M \coloneqq \frakg M$ as a smooth fiber bundle
with the fiberwise group structure defined via the Baker-Campbell-Hausdorff formula.
The dilation $\dilation_{r}$ on $\frakg M$ induces that on $G M$,
which we also denote by $\dilation_{r}$.

Take a local frame $(Z_{\alpha})$ of $T^{1, 0} M$ on an open set $U \subset M$
such that
\begin{equation}
	\calL_{\theta}(Z_{\alpha}, Z_{\beta}) = 2 \tensor{\dilation}{_{\alpha}_{\beta}}.
\end{equation}
Then the map
\begin{equation}
\label{eq:identification-with-trivial-Lie-alg-bundle}
	\frakg M |_{U} \to U \times \frakg;
	\qquad \rbra*{ p, 2 \Re \sum_{\alpha = 1}^{n} z^{\alpha} Z_{\alpha} + t [T]}
		\mapsto (p, z, t)
\end{equation}
defines an isomorphism of Lie algebra bundles.
This isomorphism is compatible with the dilation.
The identification \cref{eq:identification-with-trivial-Lie-alg-bundle} induces those on $G M$
and the dual bundle $\frakg^{\ast} M \coloneqq (\frakg M)^{\ast}$ of $\frakg M$:
\begin{equation}
\label{eq:identification-with-trivial-Lie-grp-bundle}
	G M |_{U} \to U \times G,
	\qquad
	\frakg^{\ast} M |_{U} \to U \times \frakg^{\ast}.
\end{equation}
These are also compatible with the dilation.
Let $(Z_{\alpha}^{\prime})$ be another local frame of $T^{1, 0} M$ on $U$ satisfying
$\calL_{\theta}(Z_{\alpha}^{\prime}, Z_{\beta}^{\prime}) = 2 \tensor{\delta}{_{\alpha}_{\beta}}$.
This gives another identification $\frakg M |_{U} \to U \times \frakg$.
These two identifications are related by a smooth family $(U(p))_{p \in U}$ of unitary matrices;
that is,
\begin{equation}
	U \times \frakg \to U \times \frakg;
	\qquad
	(p, z, t)
	\mapsto (p, U(p) \cdot z, t).
\end{equation}
The same is true for $G M$ and $\frakg^{\ast} M$.

For $m \in \bbR$,
the space $\Hsymbol^{m}(M)$
consists of functions in $C^{\infty}(\frakg^{*} M \setminus \{0\})$
that are homogeneous of degree $m$ on each fiber.
Under the identification \cref{eq:identification-with-trivial-Lie-grp-bundle},
the fiberwise product $\ast^{0}$ induces a well-defined bilinear product
\begin{equation}
	\ast \colon \Hsymbol^{m_{1}}(M) \times \Hsymbol^{m_{2}}(M)
	\to \Hsymbol^{m_{1} + m_{2}}(M).
\end{equation}
To simplify notation,
we write
\begin{equation}
	a^{\ast k}
	\coloneqq \underbrace{a \ast \dots \ast a}_{\text{$k$ factors}}.
\end{equation}

Now we consider Heisenberg pseudodifferential operators.
For $m \in \bbR$,
denote by $\Hpsido^{m}(M)$
the space of \emph{Heisenberg pseudodifferential operators}
\begin{equation}
	A \colon C^{\infty}(M) \to C^{\infty}(M)
\end{equation}
of order $m$.
For example,
$V \in \Gamma(H M)$ is an element of $\Hpsido^{1}(M)$
and $T \in \Hpsido^{2}(M)$.
This space is closed under complex conjugation,
transposition, and taking formal adjoints~\cite{Ponge2008-Book}*{Proposition 3.1.23}.
In particular,
any $A \in \Hpsido^{m}(M)$ extends to a linear operator
\begin{equation}
	A \colon \scrD^{\prime}(M) \to \scrD^{\prime}(M),
\end{equation}
where $\scrD^{\prime}(M)$ is the space of distributions on $M$.
Note that
\begin{equation}
	\Hpsido^{- \infty}(M) \coloneqq \bigcap_{m \in \bbR} \Hpsido^{m}(M)
\end{equation}
coincides with the space of smoothing operators on $M$.
As in the usual pseudodifferential calculus,
there exists the \emph{Heisenberg principal symbol}
\begin{equation}
	\sigma_{m} \colon \Hpsido^{m}(M) \to \Hsymbol^{m}(M),
\end{equation}
which has the following properties:

\begin{proposition}[\cite{Ponge2008-Book}*{Propositions 3.2.6, 3.2.9, and 3.2.12}]
\label{prop:Heisenberg-principal-symbol}
	(i) The Heisenberg principal symbol $\sigma_{m}$ gives the following exact sequence:
	\begin{equation}
		0 \to \Hpsido^{m - 1}(M) \to \Hpsido^{m}(M) \xrightarrow{\sigma_{m}} \Hsymbol^{m}(M) \to 0.
	\end{equation}
	
	(ii) The formal adjoint $A^{\ast}$ of $A \in \Hpsido^{m}(M)$ satisfies
	\begin{equation}
		\sigma_{m}(A^{\ast}) = \overline{\sigma_{m}(A)}.
	\end{equation}
	In particular,
	if $A$ is formally self-adjoint,
	then $\sigma_{m}(A)$ is real-valued.
	
	(iii) For $A_{1} \in \Hpsido^{m_{1}}(M)$ and $A_{2} \in \Hpsido^{m_{2}}(M)$,
	the operator $A_{1} A_{2}$ is a Heisenberg pseudodifferential operator of order $m_{1} + m_{2}$,
	and
	\begin{equation}
		\sigma_{m_{1} + m_{2}}(A_{1} A_{2}) = \sigma_{m_{1}}(A_{1}) \ast \sigma_{m_{2}}(A_{2}).
	\end{equation}
\end{proposition}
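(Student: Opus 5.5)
Since this proposition is quoted from \cite{Ponge2008-Book}, the plan is to recover it from the local description of Heisenberg pseudodifferential operators. The one genuinely geometric point is that the model product $\ast^{0}$ patches to a global product $\ast$, and that is already built into the definition of $\Hsymbol^{m}(M)$ given above. Work in a chart $U$ with a frame $(Z_{\alpha})$ normalized as in \cref{eq:identification-with-trivial-Lie-alg-bundle}, and use privileged (Heisenberg) coordinates centred at each $p\in U$. In such coordinates an $A\in\Hpsido^{m}(M)$ is, modulo smoothing operators, given by a kernel with an asymptotic expansion $k_{A}(p,v)\sim\sum_{j\geq 0}k_{m-j}(p,v)$ in the osculating variable $v\in G$. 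Each term satisfies $k_{m-j}(p,\cdot)\in\scrK_{-m-2n-2+j}$. The principal symbol is defined by $\sigma_{m}(A)(p,\cdot)=\Fourier(k_{m}(p,\cdot))|_{\frakg^{\ast}\setminus\{0\}}$. Equivalently, $O^{0}(\sigma_{m}(A)(p,\cdot))$ is the model operator obtained by freezing $A$ at $p$ and taking the leading homogeneous part under $\dilation_{r}$.

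Well-definedness is the first thing to check. A change of frame acts by the unitary family $U(p)$ on $\frakg^{\ast}M$, consistent with \cref{eq:identification-with-trivial-Lie-grp-bundle}. A change of privileged coordinates alters $k_{A}$ only by terms of lower homogeneity, as in \cite{Beals-Greiner1988}. So $\sigma_{m}(A)$ is an element of $\Hsymbol^{m}(M)$.

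\emph{Proof of (i).} The map $\sigma_{m}$ kills $\Hpsido^{m-1}(M)$, since such operators have no homogeneous term of degree $-m-2n-2$. Conversely, if $\sigma_{m}(A)=0$, then $k_{m}$ vanishes as a distribution modulo the $\log$-terms allowed in $\scrK$, so the expansion starts at order $m-1$ and $A\in\Hpsido^{m-1}(M)$. This step uses the injectivity part of \cref{prop:well-defined-of-Hpsido}.

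Surjectivity is obtained by quantization. Given $a\in\Hsymbol^{m}(M)$, take a finite partition of unity $\sum\phi_{i}=1$ subordinate to charts and cutoffs $\psi_{i}$ with $\psi_{i}=1$ on $\supp\phi_{i}$. On each chart, use \cite{Beals-Greiner1988}*{Proposition 15.8} to extend $a(p,\cdot)$ to $g(p,\cdot)\in\scrG^{m}$, depending smoothly on $p$. Set $k(p,\cdot)=\Fourier^{-1}g(p,\cdot)$, which is cut off away from the diagonal. The operator $\sum_{i}\phi_{i}\,\mathrm{Op}(k)\,\psi_{i}$ then lies in $\Hpsido^{m}(M)$ and has principal symbol $a$.

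\emph{Proof of (ii).} Compute the kernel of $A^{\ast}$ in privileged coordinates: it is $\overline{k_{A}(q,p)}$. Re-expanding around the base point $p$ leaves the leading term equal to $\overline{k_{m}(p,v^{-1})}$, since the corrections involve Taylor expansion in the base variable and are of lower homogeneity. By \cref{thm:adjoint-and-composition-of-model-operators}(i), the model operator with kernel $\overline{k_{m}(p,v^{-1})}$ is $O^{0}(\overline{\sigma_{m}(A)(p,\cdot)})$. This gives $\sigma_{m}(A^{\ast})=\overline{\sigma_{m}(A)}$. Real-valuedness for formally self-adjoint $A$ then follows from the injectivity of $O^{0}$.

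\emph{Proof of (iii).} This is the main obstacle. The composition $A_{1}A_{2}$ is an integral over $M$, whereas $\ast^{0}$ is a convolution on $G$. The plan is to freeze both kernels at the base point and show that the error is of lower order.

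First localize, so that $A_{1}$ and $A_{2}$ are properly supported in one chart. Write $k_{A_{1}A_{2}}(p,\cdot)=\int k_{A_{1}}(p,\cdot)\,k_{A_{2}}(q,\cdot)\,dq$. In privileged coordinates, $q=p\cdot w+O(\text{higher order})$, and the Lebesgue measure agrees with Haar measure to leading order. Taylor-expand $k_{A_{2}}(q,\cdot)$ in $q$ around $p$, expanding both the base variable and the deviation of the coordinate group law from the group law of $G$ at $p$.

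The resulting terms have degree of homogeneity increased by at least one, except for the leading term. The leading term is the convolution $k_{m_{1}}(p,\cdot)\ast_{G}k_{m_{2}}(p,\cdot)$, computed with the osculating group law at $p$. To control the remainders one uses the estimates $\abs{D k_{m-j}(p,v)}\lesssim\rho(v)^{-m-2n-2+j-\deg D}$, together with the finite Taylor remainder in $\rho$. This is exactly the argument of \cite{Beals-Greiner1988}*{\S\S 13--14}.

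Finally, the leading term equals the kernel of $O^{0}(\sigma_{m_{1}}(A_{1})(p,\cdot))\,O^{0}(\sigma_{m_{2}}(A_{2})(p,\cdot))$. By \cref{thm:adjoint-and-composition-of-model-operators}(ii) this is the kernel of $O^{0}\bigl(\sigma_{m_{1}}(A_{1})\ast^{0}\sigma_{m_{2}}(A_{2})\bigr)(p)$, which yields (iii). The delicate points are convergence of the convolution of the two homogeneous distributions, which is handled on $\Schwartz_{0}(G)$ via \cref{prop:well-defined-of-Hpsido}, and the $\log$-terms at integer orders. For the latter, the approach is to track the $\log$-terms through the expansion and observe that they only affect the kernel by polynomial (smooth) terms, which are invisible to the symbol on $\frakg^{\ast}\setminus\{0\}$.
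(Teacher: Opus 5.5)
The paper gives no proof of this proposition; it is quoted from Ponge's memoir. There, (i)--(iii) come from the Beals--Greiner symbolic calculus in Heisenberg charts, combined with an intrinsic description of the principal symbol on $\frakg^{\ast} M$ via the tangent group $G M$. Your outline is the Fourier-dual, kernel-side version of that standard argument. You define $\sigma_{m}$ through the leading homogeneous kernel in coordinates centred at $p$ and freeze at the base point, so (ii) and (iii) reduce directly to \cref{thm:adjoint-and-composition-of-model-operators}. This fits the paper's $O^{0}$ formalism well. The symbol-side route instead gives an asymptotic composition formula in which no product of non-integrable distributions has to be estimated, and it makes the coordinate dependence explicit.

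Two steps need tightening. The first is the choice of coordinates. You claim that a change of coordinates alters $k_{A}$ only in lower homogeneity, and that the leading term in (iii) is a convolution for $\ast^{0}$. Both hold in coordinates where the nilpotent approximation at $p$ is $G$ with its standard group law, e.g.\ $(z, t) \mapsto \exp(2 \Re \sum_{\alpha} z^{\alpha} Z_{\alpha} + t T)(p)$. They fail in general for the affine Heisenberg coordinates of Beals--Greiner and Ponge. There a change of chart acts on the leading kernel through a weight-preserving polynomial map $(v^{\prime}, v_{0}) \mapsto (v^{\prime}, v_{0} + S(v^{\prime}, v^{\prime}))$, with $S$ a symmetric bilinear form. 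Correspondingly, the osculating group law in those coordinates carries an extra symmetric bilinear term. This is why one must pass from Heisenberg to privileged coordinates by a quadratic change of variables before the principal symbol becomes intrinsic. So ``privileged'' and ``Heisenberg'' cannot be used interchangeably in your argument.

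The second is the remainder control in (iii). Pointwise bounds on $k_{m - j}$ do not suffice when $m_{1}$ or $m_{2}$ is non-negative, because the kernels are then not locally integrable. Examples are the derivatives of $\delta$ for $L_{\mu}$ and a kernel of critical degree $- 2 n - 2$ for $S$. The Taylor terms have to be handled as distributions (multiplying a kernel by a homogeneous polynomial of degree $j$ lowers the order by $j$), or the expansion has to be done on the symbol side.

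There are also three minor points. In (i), $\sigma_{m}(A) = 0$ means that the Fourier transform of $k_{m}$ is supported at the origin. Hence $k_{m}$ is a homogeneous polynomial with no $\log$ term, which is smooth and can be absorbed into the lower-order part. This is elementary Fourier analysis, not the injectivity statement in \cref{prop:well-defined-of-Hpsido}. Relatedly, what is invisible to the symbol is this polynomial ambiguity, not the $\log$ terms themselves, which are not smooth. In (ii), real-valuedness follows at once from $\sigma_{m}(A) = \sigma_{m}(A^{\ast}) = \overline{\sigma_{m}(A)}$; injectivity of $O^{0}$ is not needed.
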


Next,
consider approximate inverses of Heisenberg pseudodifferential operators.
We write $A \sim B$ if $A - B$ is a smoothing operator.

\begin{definition}
	Let $A \in \Hpsido^{m}(M)$.
	An operator $B \in \Hpsido^{- m}(M)$ is called a \emph{parametrix} of $A$
	if $A B \sim I$ and $B A \sim I$.
\end{definition}

The existence of a parametrix of a Heisenberg pseudodifferential operator
is determined only by its Heisenberg principal symbol.

\begin{proposition}[\cite{Ponge2008-Book}*{Proposition 3.3.1}]
\label{prop:equivalent-conditions-for-existence-of-parametrix}
	Let $A \in \Hpsido^{m}(M)$
	with Heisenberg principal symbol $a \in \Hsymbol^{m}(M)$.
	Then the following are equivalent:
	\begin{enumerate}
		\item $A$ has a parametrix;
		\item there exists $B \in \Hpsido^{- m}(M)$ such that
			$A B - I, B A - I \in \Hpsido^{- 1}(M)$;
		\item there exists $b \in \Hsymbol^{- m}(M)$ such that
			$a \ast b = b \ast a = 1$.
	\end{enumerate}
\end{proposition}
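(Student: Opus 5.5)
We would prove the three implications (1) $\Rightarrow$ (3) $\Rightarrow$ (2) $\Rightarrow$ (1), using only the symbolic properties in \cref{prop:Heisenberg-principal-symbol}. The one input not already stated is the asymptotic completeness of the Heisenberg calculus, which we take as known.

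For (1) $\Rightarrow$ (3), let $B \in \Hpsido^{-m}(M)$ be a parametrix of $A$ and set $b \coloneqq \sigma_{-m}(B)$. Smoothing operators lie in $\Hpsido^{-1}(M)$, so the exact sequence in \cref{prop:Heisenberg-principal-symbol}(i) gives $\sigma_{0}(AB) = \sigma_{0}(I) = 1$. Part (iii) of the same proposition gives $\sigma_{0}(AB) = a \ast b$. Hence $a \ast b = 1$, and the same argument applied to $BA \sim I$ gives $b \ast a = 1$.

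For (3) $\Rightarrow$ (2), use the surjectivity of $\sigma_{-m}$ in \cref{prop:Heisenberg-principal-symbol}(i) to choose $B \in \Hpsido^{-m}(M)$ with $\sigma_{-m}(B) = b$. Then
\begin{equation}
	\sigma_{0}(AB - I) = a \ast b - 1 = 0,
	\qquad
	\sigma_{0}(BA - I) = b \ast a - 1 = 0,
\end{equation}
and exactness gives $AB - I, BA - I \in \Hpsido^{-1}(M)$.

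For (2) $\Rightarrow$ (1), write $AB = I - R$ and $BA = I - R'$ with $R, R' \in \Hpsido^{-1}(M)$. By \cref{prop:Heisenberg-principal-symbol}(iii), $R^{k}, R'^{k} \in \Hpsido^{-k}(M)$ for every $k \geq 0$. By asymptotic completeness there exist $B_{r}, B_{l} \in \Hpsido^{-m}(M)$ with
\begin{equation}
	B_{r} \sim \sum_{k \geq 0} B R^{k},
	\qquad
	B_{l} \sim \sum_{k \geq 0} R'^{k} B.
\end{equation}
Here $B_{r} \sim \sum_{k} B R^{k}$ means that $B_{r} - \sum_{k < N} B R^{k} \in \Hpsido^{-m-N}(M)$ for every $N$, and likewise for $B_{l}$. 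For each $N$,
\begin{equation}
	A \sum_{k < N} B R^{k} = (I - R) \sum_{k < N} R^{k} = I - R^{N},
\end{equation}
so $A B_{r} - I \in \Hpsido^{-N}(M)$ for every $N$. Since $\Hpsido^{-\infty}(M)$ is the space of smoothing operators, $A B_{r} \sim I$. The same computation gives $B_{l} A \sim I$. Finally, using that composition with a Heisenberg pseudodifferential operator preserves smoothing operators,
\begin{equation}
	B_{l} \sim B_{l} (A B_{r}) = (B_{l} A) B_{r} \sim B_{r}.
\end{equation}
Hence $B_{r}$ satisfies both $A B_{r} \sim I$ and $B_{r} A \sim I$, so it is a parametrix of $A$.

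The main obstacle is the asymptotic summation step, i.e.\ the Borel-type lemma for Heisenberg pseudodifferential operators. It is proved as in the classical calculus: work in local Heisenberg charts, cut off the symbols of the terms $B R^{k}$ near $\xi = 0$ at scales growing with $k$, and patch with a partition of unity. We would cite this from \cite{Beals-Greiner1988} or \cite{Ponge2008-Book} rather than reprove it.
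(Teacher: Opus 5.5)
Your proof is correct: the symbolic calculus gives (1)$\Rightarrow$(3)$\Rightarrow$(2), and (2)$\Rightarrow$(1) follows from the Neumann series, asymptotic completeness, and the comparison of left and right parametrices. The paper does not prove this statement itself but cites Ponge's Proposition 3.3.1, which is the standard argument along essentially these lines.
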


Now consider the Heisenberg differential operator $\Delta_{b} + 1$ of order $2$.
It is known that this operator has a parametrix;
see the proof of~\cite{Ponge2008-Book}*{Proposition 3.5.7} for example.
Since $\Delta_{b} + 1$ is positive and self-adjoint,
the $s$-th power $(\Delta_{b} + 1)^{s}$ of $\Delta_{b} + 1$,
$s \in \mathbb{R}$,
is a Heisenberg pseudodifferential operator of order $2 s$~\cite{Ponge2008-Book}*{Theorems 5.3.1 and 5.4.10}.
Using this operator,
we define
\begin{equation}
	\HSobolev^{s}(M)
	:= \Set{ u \in \scrD^{\prime}(M) \mid (\Delta_{b} + 1)^{s / 2} u \in L^{2}(M) }.
\end{equation}
This space is a Hilbert space with the inner product
\begin{equation}
	\iproduct{u}{v}_{s}
	= \iproduct{(\Delta_{b} + 1)^{s / 2} u}{(\Delta_{b} + 1)^{s / 2} v}_{L^{2}};
\end{equation}
write $\norm{\cdot}_{s}$ for the norm determined by $\iproduct{\cdot}{\cdot}_{s}$.
The space $C^{\infty}(M)$ is dense in $\HSobolev^{s}(M)$,
and $C^{\infty}(M) = \bigcap_{s \in \bbR} \HSobolev^{s}(M)$~\cite{Ponge2008-Book}*{Proposition 5.5.3}.
Moreover,
we obtain from the definition of $\norm{\cdot}_{s}$ that
\begin{equation}
	\abs{\iproduct{u}{v}_{0}}
	\leq \norm{u}_{s} \norm{v}_{- s}
\end{equation}
for $u, v \in C^{\infty}(M)$.
Note that,
for $k \in \bbZ_{\geq 0}$,
the Hilbert space $\HSobolev^{k}(M)$ coincides with the Folland-Stein space $S^{k, 2}(M)$
as a topological vector space~\cite{Ponge2008-Book}*{Proposition 5.5.5}.
As in the usual $L^{2}$-Sobolev space theory,
we obtain the following:

\begin{lemma}[\cite{Takeuchi2023-GJMS}*{Lemma 4.5}]
\label{lem:Rellich's-lemma}
	For $s_{1} < s_{2}$,
	the embedding $\HSobolev^{s_{2}}(M) \hookrightarrow \HSobolev^{s_{1}}(M)$ is compact.
\end{lemma}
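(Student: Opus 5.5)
The compactness of $\HSobolev^{s_{2}}(M) \hookrightarrow \HSobolev^{s_{1}}(M)$ will be reduced to the classical Rellich lemma by comparing Heisenberg Sobolev spaces with the ordinary ones. The two comparisons come from the order filtration of the Heisenberg calculus. A classical pseudodifferential operator of order $k$ lies in $\Hpsido^{2 k}(M)$. In the other direction, the subelliptic estimate for $\Delta_{b} + 1$ shows that $\HSobolev^{s}(M) \hookrightarrow H^{s / 2}(M)$ continuously for every $s$. Here $H^{\sigma}$ is the usual Sobolev space.

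\textbf{Step 1: reduction to the case $s_{1} = 0$.} The operator $(\Delta_{b} + 1)^{s_{1} / 2}$ is a unitary isomorphism $\HSobolev^{s}(M) \to \HSobolev^{s - s_{1}}(M)$ for every $s$. This uses that the complex powers form a group and commute, by the functional calculus of the self-adjoint operator $\Delta_{b} + 1$. Hence it suffices to show that $\HSobolev^{\sigma}(M) \hookrightarrow L^{2}(M)$ is compact for $\sigma = s_{2} - s_{1} > 0$.

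\textbf{Step 2: the compactness argument.} Let $(u_{j})$ be bounded in $\HSobolev^{\sigma}(M)$. I would show that $\HSobolev^{\sigma}(M) \hookrightarrow H^{\sigma / 2}(M)$ continuously. Granting this, Rellich's lemma on the closed manifold $M$ gives a subsequence converging in $L^{2}(M)$.

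\textbf{Step 3: the embedding $\HSobolev^{\sigma} \hookrightarrow H^{\sigma/2}$ (main obstacle).} This is the key analytic input. Let $\Lambda^{\sigma / 2}$ be a classical elliptic pseudodifferential operator of order $\sigma / 2$. It belongs to $\Hpsido^{\sigma}(M)$ by the inclusion of the classical calculus, as in~\cite{Ponge2008-Book}. Then $\Lambda^{\sigma / 2} (\Delta_{b} + 1)^{- \sigma / 2}$ lies in $\Hpsido^{0}(M)$, and zeroth-order Heisenberg operators are $L^{2}$-bounded~\cite{Ponge2008-Book}. This gives $\norm{\Lambda^{\sigma/2} u}_{L^{2}} \leq C \norm{u}_{\sigma}$. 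I expect the delicate point to be justifying the inclusion of the classical calculus into the Heisenberg calculus with the right order doubling. If that citation is inconvenient, I would instead argue directly for $\sigma = 2k$ with $k$ a positive integer. There one uses the identification of $\HSobolev^{2k}$ with the Folland--Stein space $S^{2k, 2}(M)$ and the fact that Folland--Stein regularity controls $k$ ordinary derivatives, since $T$ is a commutator of horizontal fields. For general $\sigma$ one then interpolates, or uses the monotonicity $\HSobolev^{\sigma} \supset \HSobolev^{\sigma'}$ for $\sigma' > \sigma$.

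A cleaner alternative avoids classical Sobolev spaces altogether. By Step 1 the embedding is the operator $(\Delta_{b} + 1)^{- \sigma / 2}$ on $L^{2}$. I would show that this operator is compact because it lies in $\Hpsido^{- \sigma}(M)$ with $\sigma > 0$. Heisenberg operators of negative order are compact on $L^{2}$. One way to see this is that a sufficiently high power $(\Delta_{b} + 1)^{- N}$ has a continuous kernel and is therefore Hilbert--Schmidt. Then $(\Delta_{b} + 1)^{- \sigma / 2}$ is a function of the compact self-adjoint operator $(\Delta_{b} + 1)^{- N}$ that vanishes at $0$, so it is compact as well. I would present this version, citing~\cite{Ponge2008-Book} for compactness of negative-order operators or for the discreteness of the spectrum of $\Delta_{b}$.
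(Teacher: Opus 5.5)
The paper gives no argument for this lemma; it is imported from \cite{Takeuchi2023-GJMS} (Lemma 4.5), so there is nothing in the paper to compare against step by step. Judged on its own, the version you say you would present is correct. It differs from the standard route, which compares $\HSobolev^{\sigma}(M)$ with the classical space $H^{\sigma/2}(M)$ and then applies the classical Rellich lemma.

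Your argument runs as follows. After Step 1, the inclusion $\HSobolev^{\sigma}(M)\hookrightarrow L^{2}(M)$ is, up to the unitary $(\Delta_{b}+1)^{\sigma/2}\colon \HSobolev^{\sigma}(M)\to L^{2}(M)$, just the operator $(\Delta_{b}+1)^{-\sigma/2}$ on $L^{2}(M)$. For $N>n+1$, the operator $(\Delta_{b}+1)^{-N}\in\Hpsido^{-2N}(M)$ has order below minus the homogeneous dimension $2n+2$. Its kernel is therefore continuous, and it is Hilbert--Schmidt. Finally, $(\Delta_{b}+1)^{-\sigma/2}=f\bigl((\Delta_{b}+1)^{-N}\bigr)$ with $f(x)=x^{\sigma/(2N)}$, which is continuous on $[0,1]$ with $f(0)=0$. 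So it is a norm limit of polynomials without constant term in a compact operator, hence compact.

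This route uses only kernel regularity for very negative orders and the spectral theorem, and it never leaves the Heisenberg calculus. The classical-comparison route needs more input, namely exotic H\"{o}rmander classes, or Folland--Stein theory plus interpolation. In return it gives the stronger continuous inclusion $\HSobolev^{\sigma}(M)\hookrightarrow H^{\sigma/2}(M)$.

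You should, however, discard the first justification you give in Step 3. A classical pseudodifferential operator of order $k$ is \emph{not} in general an element of $\Hpsido^{2k}(M)$. This inclusion holds for differential operators, not for general classical pseudodifferential operators. Take the Fourier multiplier $(1+\abs{\zeta}^{2}+\tau^{2})^{\sigma/4}$ on $\bbC^{n}\times\bbR$. Then $r^{-\sigma}(1+r^{2}\abs{\zeta}^{2}+r^{4}\tau^{2})^{\sigma/4}\to\abs{\tau}^{\sigma/2}$ as $r\to\infty$. This limit would have to be its Heisenberg principal symbol, but it is not smooth on $\{\tau=0\}$ unless $\sigma/2\in 2\bbZ$.

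The comparison that does hold runs the other way. Heisenberg operators of order $-\sigma\le 0$ lie in the class $\Psi^{-\sigma/2}_{1/2,1/2}$, and this gives $(\Delta_{b}+1)^{-\sigma/2}\colon L^{2}(M)\to H^{\sigma/2}(M)$. Your Folland--Stein fallback for $\sigma=2k$ is fine. For $0<\sigma<2$, though, you genuinely need the interpolation step. The monotonicity remark does not help there, because a bound in $\HSobolev^{\sigma}$ gives no control in $\HSobolev^{\sigma'}$ for $\sigma'>\sigma$.
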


Moreover,
the following interpolation estimate allows us
to control an intermediate-order Heisenberg Sobolev norm
by higher- and lower-order norms,
with an arbitrarily small coefficient on the higher-order term.

\begin{lemma}
\label{lem:interpolation-theorem}
	For $s_{1} < s_{2} < s_{3}$ and $\varepsilon > 0$,
	there exists a constant $C > 0$ such that
	\begin{equation}
	\label{eq:interpolation-for-Sobolev-spaces}
		\norm{u}_{s_{2}}^{2}
		\leq \varepsilon \norm{u}_{s_{3}}^{2} + C \norm{u}_{s_{1}}^{2}
	\end{equation}
	for any $u \in \HSobolev^{s_{3}}(M)$.
\end{lemma}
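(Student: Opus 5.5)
The plan is to reduce the estimate to a scalar inequality via the spectral theorem for the single positive self-adjoint operator $\Delta_{b} + 1$. Put $\Lambda \coloneqq (\Delta_{b} + 1)^{1/2}$, a positive self-adjoint operator on $L^{2}(M)$ with $\Lambda \geq 1$. By definition, $\norm{u}_{s} = \norm{\Lambda^{s} u}_{0}$ for $u \in \HSobolev^{s}(M)$. Let $E_{\Lambda}$ be the spectral resolution of $\Lambda$, supported in $\clop{1}{\infty}$, and for $u \in \HSobolev^{s_{3}}(M) \subset L^{2}(M)$ let $\nu_{u}$ denote the finite positive measure $\iproduct{E_{\Lambda}(\cdot) u}{u}_{0}$. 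Then
\begin{equation}
	\norm{u}_{s_{j}}^{2} = \int_{1}^{\infty} x^{2 s_{j}} \, d \nu_{u}(x), \qquad j = 1, 2, 3.
\end{equation}
The only point needing care is that $u \in \HSobolev^{s_{3}}(M)$ means $u \in \scrD^{\prime}(M)$ with $\Lambda^{s_{3}} u \in L^{2}(M)$. When $s_{1} < 0$, $u$ need not lie in $L^{2}$. I would handle this by first proving the inequality for $u \in C^{\infty}(M)$, where everything is in $L^{2}$ and lies in the domain of every power of $\Lambda$ (since $\Lambda^{s}$ is a Heisenberg pseudodifferential operator and preserves $C^{\infty}(M)$). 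I would then pass to general $u$ by density of $C^{\infty}(M)$ in $\HSobolev^{s_{3}}(M)$, using the continuous embeddings $\HSobolev^{s_{3}} \hookrightarrow \HSobolev^{s_{2}}, \HSobolev^{s_{1}}$. These embeddings follow from $\norm{\Lambda^{s} v}_{0} \leq \norm{\Lambda^{s'} v}_{0}$ for $s \leq s'$ and smooth $v$.

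The core step is the pointwise inequality
\begin{equation}
	x^{2 s_{2}} \leq \varepsilon x^{2 s_{3}} + C x^{2 s_{1}}, \qquad x \geq 1.
\end{equation}
Write $\theta \in (0, 1)$ with $s_{2} = \theta s_{3} + (1 - \theta) s_{1}$. Young's inequality $ab \leq \theta a^{1/\theta} + (1 - \theta) b^{1/(1-\theta)}$, applied to $a = (\delta x^{2 s_{3}})^{\theta}$ and $b = (\delta^{-\theta/(1-\theta)} x^{2 s_{1}})^{1 - \theta}$, gives
\begin{equation}
	x^{2 s_{2}} \leq \theta \delta x^{2 s_{3}} + (1 - \theta) \delta^{-\theta/(1-\theta)} x^{2 s_{1}}.
\end{equation}
Choosing $\delta = \varepsilon/\theta$ yields the claim with $C = (1 - \theta)(\varepsilon/\theta)^{-\theta/(1-\theta)}$. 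Alternatively, one can split into the cases $x \geq R$, where $x^{2 s_{2}} \leq R^{2(s_{2} - s_{3})} x^{2 s_{3}} \leq \varepsilon x^{2 s_{3}}$ for large $R$, and $x < R$, where $x^{2 s_{2}} \leq R^{2(s_{2} - s_{1})} x^{2 s_{1}}$.

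Integrating the pointwise inequality against $\nu_{u}$ gives the estimate for smooth $u$. The density argument then finishes the proof. I expect no step to be a real obstacle; the only subtlety is the domain/density bookkeeping for negative orders, which the smooth-approximation step resolves.
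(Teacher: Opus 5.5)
Your proof is correct and follows the same route as the paper: you use the spectral theorem for $\Delta_{b} + 1$ (equivalently its square root) together with the scalar inequality $x^{2 s_{2}} \leq \varepsilon x^{2 s_{3}} + C x^{2 s_{1}}$ for $x \geq 1$, integrated against the spectral measure. The only difference is bookkeeping: the paper first reduces to $s_{1} = 0$ via the isometry $(\Delta_{b} + 1)^{s'/2} \colon \HSobolev^{s + s'}(M) \to \HSobolev^{s}(M)$, so that $u \in L^{2}(M)$ directly, whereas you keep general $s_{1}$ and handle negative orders by proving the estimate on $C^{\infty}(M)$ and extending by density.
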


\begin{proof}
	The operator $(\Delta_{b} + 1)^{s^{\prime}/2}$, $s^{\prime} \in \mathbb{R}$,
	gives an isometry $\HSobolev^{s + s^{\prime}}(M) \to \HSobolev^{s}(M)$,
	and so we may assume that $s_{1} = 0$.
	Let $E$ be the resolution of the identity corresponding to
	the self-adjoint operator $\Delta_{b} + 1$.
	For $s \geq 0$ and $u \in \HSobolev^{s}(M)$,
	\begin{equation}
		\norm{u}_{s}^{2}
		= \int_{\clop{1}{\infty}} \lambda^{s} \, d E_{u, u}(\lambda),
	\end{equation}
	where $E_{u, u}$ is the Borel measure on $\bbR$ defined by $E_{u, u}(A) = (E(A) u, u)_{0}$.
	On the other hand,
	for any $\varepsilon > 0$,
	we can take a constant $C > 0$ such that
	\begin{equation}
		\lambda^{s_{2}}
		\leq \varepsilon \lambda^{s_{3}} + C
	\end{equation}
	for all $\lambda \geq 1$.
	Integrating this inequality against $E_{u, u}$ proves \eqref{eq:interpolation-for-Sobolev-spaces}.
\end{proof}

Heisenberg pseudodifferential operators act on these Hilbert spaces as follows:

\begin{proposition}[\cite{Ponge2008-Book}*{Propositions 5.5.8 and 5.5.9}]
\label{prop:mapping-properties-of-Hpsido}
	Any $A \in \Hpsido^{m}(M)$
	extends to a continuous linear operator
	\begin{equation}
		A \colon \HSobolev^{s + m}(M) \to \HSobolev^{s}(M)
	\end{equation}
	for every $s \in \bbR$.
	Moreover,
	if $m \geq 0$ and $A$ has a parametrix,
	then there exists a positive constant $C$ for each $s$ such that
	\begin{equation}
		\norm{u}_{s + m}^{2}
		\leq C (\norm{A u}_{s}^{2} + \norm{u}_{s}^{2})
	\end{equation}
	for $u \in \HSobolev^{s + m}(M)$.
\end{proposition}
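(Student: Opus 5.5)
The last displayed statement is \cref{prop:mapping-properties-of-Hpsido}, cited from Ponge's book. Its two assertions are continuity of $A \colon \HSobolev^{s+m}(M) \to \HSobolev^{s}(M)$ and, when $A$ has a parametrix, the a priori estimate. I will prove both by reducing to order zero via the powers $\Lambda^{t} \coloneqq (\Delta_{b} + 1)^{t/2} \in \Hpsido^{t}(M)$. The key input is that every operator in $\Hpsido^{0}(M)$ is bounded on $L^{2}(M)$.

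\textbf{Step 1: reduction to order zero.} For continuity, write
\begin{equation}
	\norm{A u}_{s} = \norm{\Lambda^{s} A \Lambda^{-s-m} \Lambda^{s+m} u}_{0}.
\end{equation}
By \cref{prop:Heisenberg-principal-symbol}(iii), $B \coloneqq \Lambda^{s} A \Lambda^{-s-m}$ lies in $\Hpsido^{0}(M)$. It therefore suffices to show $\norm{B v}_{0} \leq C \norm{v}_{0}$ for smooth $v$, and then extend by density of $C^{\infty}(M)$ in $\HSobolev^{s+m}(M)$.

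\textbf{Step 2: $L^{2}$-boundedness of order zero.} This is the main obstacle. I would first try a H\"ormander-type square-root trick. Set $b = \sigma_{0}(B^{\ast} B)$. Choose $c > 0$ larger than the operator norm of the model operator $O^{0}(b)$ on $L^{2}(G)$, uniformly over the fibres; this needs boundedness of homogeneous degree-zero convolution kernels on the Heisenberg group, which is Calder\'on--Zygmund theory on homogeneous groups. Next produce $E \in \Hpsido^{0}(M)$ with $c^{2} - B^{\ast} B = E^{\ast} E + R$ and $R \in \Hpsido^{-1}(M)$. The symbol of $E$ is a square root of $c^{2} - b$ in the $\ast$-algebra, obtained by holomorphic functional calculus in the $C^{\ast}$-completion of the model symbols. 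Iterating over orders gives $R$ of order $-N$ with $N > 2n+2$. Such an $R$ has a continuous kernel, hence is Hilbert--Schmidt. This yields
\begin{equation}
	\norm{B u}_{0}^{2} \leq c^{2} \norm{u}_{0}^{2} + \abs{\iproduct{R u}{u}_{0}} \leq C \norm{u}_{0}^{2}.
\end{equation}
If the symbolic square root is too delicate, I would instead fall back on a direct $T(1)$ / Cotlar--Stein argument in local Heisenberg charts, using kernel estimates of Calder\'on--Zygmund type.

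\textbf{Step 3: the estimate.} Let $P$ be a parametrix, so $P \in \Hpsido^{-m}(M)$ and $P A = I - S$ with $S$ smoothing. Then $u = P A u + S u$. By Step 1 applied to $P$,
\begin{equation}
	\norm{P A u}_{s+m} \leq C \norm{A u}_{s}.
\end{equation}
Since $S \in \Hpsido^{-\infty}(M)$, Step 1 also gives $\norm{S u}_{s+m} \leq C \norm{u}_{s}$. Squaring and adding gives $\norm{u}_{s+m}^{2} \leq C(\norm{A u}_{s}^{2} + \norm{u}_{s}^{2})$.

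Extension from smooth $u$ to all of $\HSobolev^{s+m}(M)$ follows from density, because both sides are continuous in the $\HSobolev^{s+m}$ norm. The hypothesis $m \geq 0$ is not needed for this argument; I would only keep track of it to match the cited statement.
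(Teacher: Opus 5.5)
The paper gives no proof of this statement; it quotes Ponge's book. Your Steps 1 and 3 are the standard argument: conjugating by $(\Delta_{b}+1)^{t/2}$ reduces everything to $L^{2}$-boundedness of $\Hpsido^{0}(M)$, and $u = PAu + Su$ with a parametrix $P$ gives the estimate. Both steps are correct as written, and you are right that $m \geq 0$ is never used.

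The gap is in Step 2, which is the only substantial input. Your square-root trick needs an honest symbol $e \in \Hsymbol^{0}(M)$, smooth on $\frakg^{\ast}M \setminus \{0\}$, with $\overline{e} \ast e = c^{2} - b$. Holomorphic functional calculus in the $C^{\ast}$-closure of the model operators only produces an element of that closure. To know that this element is $O^{0}(e_{x})$ for a smooth homogeneous $e_{x}$ depending smoothly on $x$, you need the resolvents of $c^{2} - O^{0}(b_{x})$ to stay in the symbol class. That is, you need the order-zero homogeneous symbols to be inverse-closed in $B(L^{2}(G))$. This is a real theorem (of Christ--Geller--G{\l}owacki--Polin type), not a formality. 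You also use it tacitly a second time, when you give $E_{0}$ a parametrix to run the iteration. The paper's Section 4 shows that such square roots are not free: even for one explicit multiplier $\kappa(\Delta_{b}, -\sqrt{-1}T)$, proving that the kernel is homogeneous and smooth off the origin takes a dyadic decomposition and the Astengo--Di Blasio--Ricci multiplier theorem. Your Cotlar--Stein/$T(1)$ fallback is only named, not carried out.

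A cheaper repair avoids the symbol algebra altogether.
\begin{itemize}
\item Take a Heisenberg chart with $H$-frame $X_{0}, \dots, X_{2n}$. Modulo smoothing operators, an element of $\Hpsido^{0}$ is an ordinary pseudodifferential operator with symbol $p(x, \sigma(x, \xi))$. Here $\sigma_{j}(x, \xi) = \langle X_{j}(x), \xi \rangle$, and $p$ satisfies Heisenberg-homogeneous estimates of order $0$ (cut off near $\xi = 0$).
\item Write $\sigma^{\prime} = (\sigma_{1}, \dots, \sigma_{2n})$. Then $\abs{\sigma_{0}}^{1/2} + \abs{\sigma^{\prime}} \geq c_{0} \abs{\xi}^{1/2}$ for $\abs{\xi} \geq 1$.
\item Hence each $\xi$-derivative gains $\abs{\xi}^{-1/2}$.
\item An $x$-derivative falling on $\sigma$ produces a factor $O(\abs{\xi})$. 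This is paired with $\partial_{\sigma^{\prime}} p = O(\abs{\xi}^{-1/2})$ or $\partial_{\sigma_{0}} p = O(\abs{\xi}^{-1})$, so it loses at most $\abs{\xi}^{1/2}$.
\item The symbol therefore lies in $S^{0}_{1/2,1/2}$, and Calder\'on--Vaillancourt gives $L^{2}$-boundedness. A partition of unity plus pseudolocality globalizes this.
\end{itemize}
Alternatively, simply quote the $L^{2}$-boundedness of $\Hpsido^{0}(M)$ from Beals--Greiner or Ponge, which is in effect what the paper's citation does.
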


We conclude this section by giving a sufficient condition
for boundedness from below of a Heisenberg pseudodifferential operator.

\begin{lemma}
\label{lem:equivalent-conditions-for-boundedness}
	Let $A$ be a formally self-adjoint Heisenberg pseudodifferential operator of order $m > 0$
	with Heisenberg principal symbol $a \in \Hsymbol^{m}(M)$.
	Then the following conditions are equivalent:
	\begin{enumerate}
		\item there exists a formally self-adjoint $B \in \Hpsido^{m / 2}(M)$ such that
			$A - B^{2} \in \Hpsido^{m - 1}(M)$;
		\item there exists a real-valued $b \in \Hsymbol^{m / 2}(M)$ such that
			$a = b \ast b$.
	\end{enumerate}
	If one of these conditions is satisfied and $A$ has a parametrix,
	then there exist $\varepsilon > 0$ and $C > 0$ such that
	\begin{equation}
	\label{eq:Garding-type-inequality}
		\iproduct{A u}{u}_{0}
		\geq \varepsilon \norm{u}_{m / 2}^{2} - C \norm{u}_{0}^{2}
	\end{equation}
	for $u \in \HSobolev^{m}(M)$.
\end{lemma}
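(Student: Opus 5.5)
The argument splits into the equivalence (1)$\Leftrightarrow$(2) and the G\aa rding-type inequality.

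\emph{From (1) to (2).} Given a formally self-adjoint $B \in \Hpsido^{m/2}(M)$ with $A - B^{2} \in \Hpsido^{m-1}(M)$, set $b \coloneqq \sigma_{m/2}(B)$. By \cref{prop:Heisenberg-principal-symbol}~(ii), $b$ is real-valued. By (iii) and the exactness in (i), $a = \sigma_{m}(B^{2}) = b \ast b$.

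\emph{From (2) to (1).} Given a real $b$ with $a = b \ast b$, use the surjectivity in \cref{prop:Heisenberg-principal-symbol}~(i) to choose $B_{0} \in \Hpsido^{m/2}(M)$ with $\sigma_{m/2}(B_{0}) = b$. Replace it by $B \coloneqq \frac{1}{2}(B_{0} + B_{0}^{\ast})$. This $B$ is formally self-adjoint, and since $b$ is real, (ii) gives $\sigma_{m/2}(B) = b$. Then $\sigma_{m}(A - B^{2}) = a - b \ast b = 0$, so $A - B^{2} \in \Hpsido^{m-1}(M)$ by (i).

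\emph{The inequality.} Let $A$ have a parametrix and write $A = B^{2} + R$ with $R \in \Hpsido^{m-1}(M)$. The first key step is to show that $B$ itself has a parametrix.
\begin{itemize}
\item Let $c \in \Hsymbol^{-m}(M)$ be a symbol inverse of $a$, which exists by \cref{prop:equivalent-conditions-for-existence-of-parametrix}.
\item Then $b \ast (b \ast c) = a \ast c = 1$, so $b$ has the right inverse $b \ast c$. Likewise $(c \ast b) \ast b = 1$, so $b$ has the left inverse $c \ast b$.
\item Associativity of $\ast$ holds because it is induced by operator composition. Hence the left and right inverses coincide, and $b$ is invertible.
\item \cref{prop:equivalent-conditions-for-existence-of-parametrix} then gives a parametrix for $B$.
\end{itemize}
Applying \cref{prop:mapping-properties-of-Hpsido} with $s = 0$ and order $m/2 \geq 0$ yields
\[
\norm{u}_{m/2}^{2} \leq C_{1}\bigl(\norm{Bu}_{0}^{2} + \norm{u}_{0}^{2}\bigr).
\]

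For $u \in \HSobolev^{m}(M)$ we then have
\[
\iproduct{Au}{u}_{0} = \norm{Bu}_{0}^{2} + \iproduct{Ru}{u}_{0}.
\]
This identity holds first for $u \in C^{\infty}(M)$. It extends to $\HSobolev^{m}(M)$ by density and by continuity of $B \colon \HSobolev^{m}(M) \to \HSobolev^{m/2}(M)$ and of the relevant pairings. To control the error term, use the duality bound with the split order $(m-1)/2$:
\[
\abs{\iproduct{Ru}{u}_{0}} \leq \norm{Ru}_{-(m-1)/2}\norm{u}_{(m-1)/2} \leq C_{2}\norm{u}_{(m-1)/2}^{2}.
\]
Since $(m-1)/2 < m/2$, \cref{lem:interpolation-theorem} gives, for any $\delta > 0$,
\[
\norm{u}_{(m-1)/2}^{2} \leq \delta \norm{u}_{m/2}^{2} + C_{\delta}\norm{u}_{0}^{2}.
\]
If $(m-1)/2 \leq 0$, the term is bounded directly by $\norm{u}_{0}^{2}$ and no interpolation is needed.

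Combining these,
\[
\iproduct{Au}{u}_{0} \geq C_{1}^{-1}\norm{u}_{m/2}^{2} - \norm{u}_{0}^{2} - C_{2}\delta\norm{u}_{m/2}^{2} - C_{2}C_{\delta}\norm{u}_{0}^{2}.
\]
Choosing $\delta$ with $C_{2}\delta < C_{1}^{-1}/2$ yields the inequality \eqref{eq:Garding-type-inequality} with $\varepsilon = C_{1}^{-1}/2$.

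The main obstacle is deducing invertibility of $b$ from that of $a = b \ast b$. This needs associativity of the symbol product on $\Hsymbol(M)$. I would justify it fiberwise through $O^{0}$, which is injective and multiplicative by \cref{thm:adjoint-and-composition-of-model-operators}, so composition of convolution operators is associative.
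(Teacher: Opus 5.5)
Your proposal is correct and follows the paper's proof essentially step by step: the same symmetrization $B = (B_{0} + B_{0}^{\ast})/2$, the same deduction that $B$ has a parametrix from the invertibility of $a = b \ast b$, and the same treatment of the remainder $R$ by duality at order $(m-1)/2$ plus interpolation (trivial when $m \leq 1$). The only differences are minor: you give an explicit left/right-inverse argument via associativity of $\ast$, which the paper simply asserts. You also extend the identity $\iproduct{Au}{u}_{0} = \norm{Bu}_{0}^{2} + \iproduct{Ru}{u}_{0}$ to $\HSobolev^{m}(M)$ by density, whereas the paper reduces the whole inequality to smooth $u$. Both variants are harmless.
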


\begin{proof}
	If (1) holds,
	then $\sigma_{m / 2}(B) \in \Hsymbol^{m / 2}(M)$ is real-valued
	and $\sigma_{m}(A) = \sigma_{m / 2}(B) \ast \sigma_{m / 2}(B)$.
	Conversely,
	assume (2).
	Then there exists $B_{0} \in \Hpsido^{m / 2}(M)$ such that
	$\sigma_{m / 2}(B_{0}) = b$.
	Consider $B = (B_{0} + B_{0}^{*}) / 2$.
	This is formally self-adjoint and $\sigma_{m / 2}(B) = b$.
	Since $\sigma_{m}(A - B^{2}) = a - b \ast b = 0$,
	we have $A - B^{2} \in \Hpsido^{m - 1}(M)$.
	This proves the equivalence between (1) and (2).
	Suppose that there exist a formally self-adjoint $B \in \Hpsido^{m / 2}(M)$
	and $R \in \Hpsido^{m - 1}(M)$
	such that $A = B^{2} + R$.
	It suffices to establish the estimate for $u \in C^{\infty}(M)$,
	since $C^{\infty}(M)$ is dense in $\HSobolev^{m}(M)$
	and both sides of the desired inequality are continuous with respect to the $\HSobolev^{m}$-topology.
	Then
	\begin{align}
		\iproduct{A u}{u}_{0}
		&= \norm{B u}_{0}^{2} + \iproduct{R u}{u}_{0} \\
		&\geq \norm{B u}_{0}^{2} - \norm{R u}_{- (m - 1) / 2} \norm{u}_{(m - 1) / 2}.
	\end{align}
	Since $A$ has a parametrix,
	its principal symbol $a = b \ast b$ is invertible with respect to the symbol product $\ast$.
	Hence $b$ is also invertible,
	and $B$ has a parametrix by \cref{prop:equivalent-conditions-for-existence-of-parametrix}.
	It follows from \cref{prop:mapping-properties-of-Hpsido} that
	there exists $\varepsilon_{1} > 0$ such that
	\begin{equation}
		\norm{B u}_{0}^{2}
		\geq \varepsilon_{1} \norm{u}_{m / 2}^{2} - \norm{u}_{0}^{2}.
	\end{equation}
	On the other hand,
	we can take $C_{1} > 0$ such that
	$\norm{R u}_{- (m - 1) / 2} \leq C_{1} \norm{u}_{(m - 1) / 2}$
	since $R \colon \HSobolev^{(m - 1) / 2}(M) \to \HSobolev^{- (m - 1) / 2}(M)$ is continuous.
	Thus we have
	\begin{equation}
		\norm{R u}_{- (m - 1) / 2} \norm{u}_{(m - 1) / 2}
		\leq C_{1} \norm{u}_{(m - 1) / 2}^{2}.
	\end{equation}
	If $0 < m \leq 1$,
	we have $\norm{u}_{(m - 1) / 2} \leq \norm{u}_{0}$.
	If $m > 1$,
	we derive from \cref{eq:interpolation-for-Sobolev-spaces} that,
	for any small $\varepsilon_{2} > 0$,
	there exists a positive constant $C_{2}$ such that
	\begin{equation}
		C_{1} \norm{u}_{(m - 1) / 2}^{2}
		\leq \varepsilon_{2} \norm{u}_{m / 2}^{2} + C_{2} \norm{u}_{0}^{2}.
	\end{equation}
	Choosing $0 < \varepsilon_{2} < \varepsilon_{1}$ completes the proof.
\end{proof}

\section{Lower bounds in the embeddable case}
\label{section:lower-bounds-in-the-embeddable}

In this section,
we prove \cref{thm:Garding-type-inequality-for-critical-CR-GJMS-operator}.
Let $(M, T^{1, 0} M, \theta)$ be a closed pseudo-Hermitian manifold of dimension $2 n + 1$.
Assume that $(M, T^{1, 0} M)$ is embeddable;
note that this assumption automatically holds if $n \geq 2$.
The author proved that
$\GJMS$ is self-adjoint and has closed range~\cite{Takeuchi2023-GJMS}*{Theorem 1.1}.
Let $G$ be the partial inverse of $\GJMS$
and let $\Pi$ be the orthogonal projection onto $\Ker \GJMS$.
The author also showed that
$G \in \Hpsido^{- 2 n - 2}(M)$ and $\Pi \in \Hpsido^{0}(M)$.
Moreover,
$\Pi$ coincides with $S + \ovS$ modulo a smoothing operator,
where $S$ is the \Szego projection~\cite{Takeuchi2023-GJMS}*{Theorem 5.6}.

In order to prove \cref{thm:Garding-type-inequality-for-critical-CR-GJMS-operator},
we introduce a formally self-adjoint Heisenberg pseudodifferential operator $\wtP$ of order $2 n + 2$ given by
\begin{equation}
	\wtP
	= \GJMS + \Pi (\Delta_{b} + 1)^{n + 1} \Pi.
\end{equation}

\begin{lemma}
\label{lem:parametrix-of-modified-GJMS-op}
	The operator $\wtP$ has a parametrix.
\end{lemma}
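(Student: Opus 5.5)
We will build the parametrix explicitly out of the partial inverse $G$ and the projection $\Pi$, both known to be Heisenberg pseudodifferential operators by \cite{Takeuchi2023-GJMS}. By self-adjointness and closed range of $\GJMS$ we have
\begin{equation}
	\GJMS G = G \GJMS = I - \Pi, \qquad \GJMS \Pi = \Pi \GJMS = 0, \qquad G \Pi = \Pi G = 0 .
\end{equation}
Let $\Lambda \in \Hpsido^{-2n-2}(M)$ be a parametrix of $(\Delta_{b} + 1)^{n+1}$; one can take $\Lambda = (\Delta_{b}+1)^{-n-1}$, which lies in the calculus. The candidate is
\begin{equation}
	B \coloneqq G + \Pi \Lambda \Pi \in \Hpsido^{-2n-2}(M).
\end{equation}

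We then compute $\wtP B$. The products $\GJMS G = I - \Pi$ and $\GJMS \Pi \Lambda \Pi = 0$ are immediate, and $\Pi (\Delta_{b}+1)^{n+1} \Pi G = 0$ since $\Pi G = 0$. The remaining term is
\begin{equation}
	\Pi (\Delta_{b}+1)^{n+1} \Pi \Lambda \Pi .
\end{equation}
It equals $\Pi$ modulo lower order only if we can commute $\Pi$ past $(\Delta_{b}+1)^{n+1}$, which is false in general. So the key step, and the main obstacle, is to control the commutator $[\Pi, (\Delta_{b}+1)^{n+1}]$.

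Rather than fight this directly, I would pass to symbols and use \cref{prop:equivalent-conditions-for-existence-of-parametrix}(3), since it suffices to produce an inverse of $\sigma_{2n+2}(\wtP)$ for $\ast$. Work locally in a frame as in \cref{eq:identification-with-trivial-Lie-alg-bundle}. At each point the principal symbols are
\begin{equation}
	\sigma(\GJMS) = \sigma^{0}(\GJMS^{0}), \qquad \sigma(\Pi) = \sigma^{0}(\Pi^{0}),
\end{equation}
the latter because $\Pi \equiv S + \ovS$ modulo smoothing and the \Szego projection has model symbol that of $S^{0}$. Likewise $\sigma((\Delta_{b}+1)^{n+1}) = \sigma^{0}((\Delta_{b}^{0})^{n+1})$. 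Hence $\sigma_{2n+2}(\wtP)$ is pointwise $\sigma^{0}_{2n+2}(\wtP^{0})$.

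On the model, $\Pi^{0}$ is a spectral projection of the joint functional calculus, so $\Pi^{0}$ commutes with $\Delta_{b}^{0}$. Consequently $\wtP^{0} = \pi(\Delta_{b}^{0}, -\sqrt{-1}T^{0})$ with $\pi$ the positive homogeneous multiplier of \cref{section:square-root-on-Heisenberg-group}. Its inverse $\pi^{-1}(\Delta_{b}^{0}, -\sqrt{-1}T^{0})$ has homogeneous kernel of degree $-2$, smooth off the origin. This follows by the Littlewood--Paley argument of \cref{prop:convolution-kernel-of-square-root} with $\kappa$ replaced by $1/\pi$; the sum now converges at both ends since the order $-2n-2$ lies strictly between $-2n-2$-type borderline issues. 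If a log term appears, it is absorbed in $\scrG^{-2n-2}$. Thus $b^{0} \coloneqq \sigma^{0}(\pi^{-1}(\dots))$ satisfies $\sigma^{0}(\wtP^{0}) \ast^{0} b^{0} = b^{0} \ast^{0} \sigma^{0}(\wtP^{0}) = 1$.

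Because $b^{0}$ is $U(n)$-invariant, it is independent of the choice of frame. It therefore defines a global $b \in \Hsymbol^{-2n-2}(M)$, depending smoothly on the point since the construction is pointwise identical in each frame. This $b$ inverts $\sigma_{2n+2}(\wtP)$, and \cref{prop:equivalent-conditions-for-existence-of-parametrix} yields the parametrix.
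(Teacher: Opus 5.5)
\textbf{There is a genuine gap, in the one step your argument rests on: the construction of the inverse model symbol $b^{0}$.}

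The multiplier $1/\pi$ has Heisenberg order $-2n-2$, which is exactly minus the homogeneous dimension. Hence its kernel lies in $\scrK_{0}$: it is homogeneous of degree $0$ up to a $\log \rho$ term, not of degree $-2$. This is precisely the borderline case, not a case safely away from it.

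Run the argument of \cref{prop:convolution-kernel-of-square-root} with $\eta = \chi/\pi$. You get $\eta_{j}(\xi) = 2^{-(2n+2)j}\eta(2^{-2j}\xi)$, hence $k_{j} = k \circ \dilation_{2^{j}}$, with no power of $2^{j}$ in front. For $j \to -\infty$ these pieces do not decay. In fact $k_{j}(v) \to k(0)$, and $k(0) \neq 0$, since it is the integral of the non-negative, not identically vanishing function $\chi/\pi$ against the Plancherel measure on $\Sigma_{1}$. So $\sum_{j<0} k_{j}$ diverges, contrary to your claim that the sum converges at both ends. One must subtract $k(0)$ from each low-frequency piece, and that renormalization is where the logarithm comes from.

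There is also a domain problem. Since $1/\pi$ is singular at the origin, \cref{lem:domain-of-operators} does not apply, and $\pi^{-1}(\Delta_{b}^{0}, -\sqrt{-1}T^{0})$ is not defined on $\Schwartz(G)$. You would have to show that $\Schwartz_{0}(G)$ lies in its domain and that the renormalized kernel represents it there; the subtracted constants are harmless on $\Schwartz_{0}(G)$. The sentence ``if a log term appears, it is absorbed in $\scrG^{-2n-2}$'' covers exactly the part that needs proof. As written, the existence of $b^{0}$ is asserted, not established.

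\textbf{The detour is also unnecessary.} Your first attempt, with $\Lambda = (\Delta_{b}+1)^{-n-1}$, is exactly the paper's candidate $\wtG_{1}$. The ``main obstacle'' is removed by your own observation that $\Pi^{0}$ commutes with $\Delta_{b}^{0}$. You do not need $\Pi$ to commute with $(\Delta_{b}+1)^{n+1}$, only modulo one order.

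Since $\sigma_{0}(\Pi)$ and $\sigma_{2}(\Delta_{b})$ are pointwise the model symbols, $\sigma_{2n+2}(\comm{\Pi}{(\Delta_{b}+1)^{n+1}})$ is pointwise $\sigma^{0}_{2n+2}(\comm{\Pi^{0}}{(\Delta_{b}^{0})^{n+1}}) = 0$. By the exact sequence in \cref{prop:Heisenberg-principal-symbol}, $\comm{\Pi}{(\Delta_{b}+1)^{n+1}} \in \Hpsido^{2n+1}(M)$. The paper obtains the same fact from $\Pi \sim S + \ovS$ and Lemma 5.1 of \cite{Takeuchi2023-GJMS}.

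Then use $G\GJMS = I - \Pi$, $G\Pi = 0$ and $\Pi\GJMS = 0$. These give $B\wtP - I = \Pi\Lambda\comm{\Pi}{(\Delta_{b}+1)^{n+1}}\Pi \in \Hpsido^{-1}(M)$. Since $B$ and $\wtP$ are formally self-adjoint, taking adjoints gives $\wtP B - I \in \Hpsido^{-1}(M)$. Condition (2) of \cref{prop:equivalent-conditions-for-existence-of-parametrix} then finishes the proof with no kernel analysis at all.
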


\begin{proof}
	From \cref{prop:equivalent-conditions-for-existence-of-parametrix},
	it is sufficient to find $\wtG_{1} \in \Hpsido^{- 2 n - 2}(M)$
	such that
	\begin{equation}
		\wtG_{1} \wtP - I, \wtP \wtG_{1} - I \in \Hpsido^{- 1}(M).
	\end{equation}
	Define $\wtG_{1}$ by
	\begin{equation}
		\wtG_{1}
		\coloneqq G + \Pi (\Delta_{b} + 1)^{- n - 1} \Pi.
	\end{equation}
	Then
	\begin{equation}
		\wtG_{1} \wtP
		= I - \Pi + \Pi (\Delta_{b} + 1)^{- n - 1} \Pi (\Delta_{b} + 1)^{n + 1} \Pi.
	\end{equation}
	We derive from $\Pi \sim S + \ovS$ and \cite{Takeuchi2023-GJMS}*{Lemma 5.1} that
	\begin{equation}
		\comm{\Pi}{(\Delta_{b} + 1)^{n + 1}}
		\in \Hpsido^{2 n + 1}(M).
	\end{equation}
	Hence
	\begin{equation}
		\wtG_{1} \wtP - I
		= \Pi (\Delta_{b} + 1)^{- n - 1} \comm{\Pi}{(\Delta_{b} + 1)^{n + 1}} \Pi
		\in \Hpsido^{- 1}(M).
	\end{equation}
	Taking the adjoint,
	we also have $\wtP \wtG_{1} - I \in \Hpsido^{- 1}(M)$.
\end{proof}

\begin{proof}[Proof of \cref{thm:Garding-type-inequality-for-critical-CR-GJMS-operator}]
	Take a local frame $(Z_{\alpha})$ of $T^{1, 0} M$ on an open set $U \subset M$
	such that $\calL_{\theta}(Z_{\alpha}, Z_{\beta}) = 2 \tensor{\delta}{_{\alpha}_{\beta}}$.
	This gives an isomorphism
	\begin{equation}
		\frakg^{\ast} M |_{U} \to U \times \frakg^{\ast}.
	\end{equation}
	Under this identification,
	it is known that
	\begin{equation}
		\sigma_{2}(L_{\mu})
		= \sigma_{2}^{0}(L_{\mu}^{0}),
		\qquad
		\sigma_{0}(S)
		= \sigma_{0}^{0}(S^{0});
	\end{equation}
	see~\cite{Ponge2008}*{Section 5}.
	Thus
	\begin{align}
		\sigma_{2 n + 2}(\wtP)
		&= \sigma_{2}(L_{- n}) \ast \dots \ast \sigma_{2}(L_{n})
			+ \sigma_{0}(\Pi) \ast \sigma_{2}(\Delta_{b})^{\ast (n + 1)} \ast \sigma_{0}(\Pi) \\
		&= \sigma_{2}^{0}(L_{- n}^{0}) \ast^{0} \dots \ast^{0} \sigma_{2}^{0}(L_{n}^{0})
			+ \sigma_{0}^{0}(\Pi^{0}) \ast^{0} \sigma_{2}^{0}(\Delta_{b}^{0})^{\ast^{0} (n + 1)}
			\ast^{0} \sigma_{0}^{0}(\Pi^{0}) \\
		&= \sigma_{2 n + 2}^{0}(\wtP^{0}).
	\end{align}
	Let $q^{0}$ be the $U(n)$-invariant real-valued symbol
	furnished by \cref{thm:square-root-of-critical-CR-GJMS-operator}.
	Under the above local identifications,
	$q^{0}$ defines local symbols that agree on overlaps,
	since the transition maps are $U(n)$-valued.
	They therefore determine a global real-valued symbol $q \in \Hsymbol^{n + 1}(M)$ satisfying
	\begin{equation}
		\sigma_{2 n + 2}(\wtP) = q * q.
	\end{equation}
	
	Let $u \in \Dom \GJMS \cap (\Ker \GJMS)^{\perp}$.
	It follows from $G \in \Hpsido^{- 2 n - 2}(M)$ that
	$u = G \GJMS u \in \HSobolev^{2 n + 2}(M)$.
	Moreover,
	$\Pi u = 0$,
	and hence $\wtP u = \GJMS u$.
	By \cref{lem:equivalent-conditions-for-boundedness,lem:parametrix-of-modified-GJMS-op},
	we have
	\begin{equation}
		(\GJMS  u, u)_{0}
		= (\wtP u, u)_{0}
		\geq \varepsilon \norm{u}_{n + 1}^{2} - C \norm{u}_{0}^{2}.
	\end{equation}
	Take a general $v \in \Dom \GJMS$.
	Then
	\begin{equation}
		(\GJMS v, v)_{0}
		= (\GJMS (I - \Pi) v, (I - \Pi) v)_{0}
		\geq - C \norm{(I - \Pi) v}_{0}^{2}
		\geq - C \norm{v}_{0}^{2},
	\end{equation}
	which means that $\GJMS$ is bounded below.
\end{proof}

\begin{corollary}
\label{cor:finite-negative-eigenvalues}
	Under the hypotheses of \cref{thm:Garding-type-inequality-for-critical-CR-GJMS-operator},
	the negative spectral subspace of $\GJMS$ is finite-dimensional.
	In particular,
	$\GJMS$ has only finitely many negative eigenvalues,
	counted with multiplicity.
\end{corollary}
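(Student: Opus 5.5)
We argue by contradiction, combining the Gårding-type estimate of \cref{thm:Garding-type-inequality-for-critical-CR-GJMS-operator} with the compactness statement of \cref{lem:Rellich's-lemma}. Let $E$ be the spectral resolution of the self-adjoint operator $\GJMS$ and let $V \coloneqq \Ran E(\opop{-\infty}{0})$ be its negative spectral subspace. Since $\Ker \GJMS = \Ran E(\{0\})$, the subspace $V$ is orthogonal to $\Ker \GJMS$. By \cref{thm:Garding-type-inequality-for-critical-CR-GJMS-operator}, the spectrum is bounded below, say by $-C$. Hence $V = \Ran E(\clop{-C}{0})$, and $\GJMS$ is bounded on $V$, so $V \subset \Dom \GJMS \cap (\Ker \GJMS)^{\perp}$.

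For every $u \in V$ we have $\iproduct{\GJMS u}{u}_{0} \leq 0$. Applying the estimate of \cref{thm:Garding-type-inequality-for-critical-CR-GJMS-operator} to such $u$ gives
\begin{equation}
	\varepsilon \norm{u}_{n + 1}^{2}
	\leq \iproduct{\GJMS u}{u}_{0} + C \norm{u}_{0}^{2}
	\leq C \norm{u}_{0}^{2}.
\end{equation}
Thus the $L^{2}$-unit ball of $V$ is bounded in $\HSobolev^{n + 1}(M)$. By \cref{lem:Rellich's-lemma}, the embedding $\HSobolev^{n + 1}(M) \hookrightarrow \HSobolev^{0}(M) = L^{2}(M)$ is compact. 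Hence the closed unit ball of the closed subspace $V \subset L^{2}(M)$ is relatively compact in $L^{2}(M)$, and Riesz's theorem forces $\dim V < \infty$.

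It then follows that $\GJMS$ restricted to $V$ is a self-adjoint operator on a finite-dimensional space. Its spectrum there is exactly the negative spectrum of $\GJMS$, which therefore consists of finitely many eigenvalues whose multiplicities sum to $\dim V$. The only point requiring care is the domain issue: membership $V \subset \Dom \GJMS$ is what lets the theorem apply. This is ensured by semiboundedness, or, if one prefers to avoid it, by working with $\Ran E(\clop{-N}{0})$ for each $N$, obtaining the same uniform dimension bound, and letting $N \to \infty$. No other obstacle is expected.
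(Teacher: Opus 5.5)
Your proof is correct and follows the paper's own argument. Boundedness from below puts the negative spectral subspace inside $\Dom \GJMS \cap (\Ker \GJMS)^{\perp}$, the Sobolev lower bound then makes its $L^{2}$-unit ball bounded in $\HSobolev^{n + 1}(M)$, and Rellich's lemma with Riesz's theorem forces finite dimension; the argument is direct, so the opening ``we argue by contradiction'' can be dropped.
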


\begin{proof}
	Let $W$ denote the negative spectral subspace of $\GJMS$,
	equipped with the norm inherited from $L^{2}(M)$.
	Since $\GJMS$ is bounded below,
	the spectral theorem implies that $W \subset \Dom \GJMS \cap (\Ker \GJMS)^{\perp}$.
	\cref{thm:Garding-type-inequality-for-critical-CR-GJMS-operator} implies that
	\begin{equation}
		0
		\geq (\GJMS u, u)_{0}
		\geq \varepsilon \norm{u}_{n + 1}^{2} - C \norm{u}_{0}^{2}
	\end{equation}
	for any $u \in W$.
	The inclusion $W \hookrightarrow L^{2}(M)$ therefore factors through $\HSobolev^{n + 1}(M)$,
	and is compact by \cref{lem:Rellich's-lemma}.
	Since $W$ is a closed subspace of $L^{2}(M)$,
	it must be finite-dimensional.
\end{proof}

\begin{remark}
\label{rem:negative-eigenvalue-in-higher-dimension}
	As we noted in the introduction,
	the critical CR GJMS operator $\GJMS$ is non-negative
	if $(M, T^{1, 0} M)$ is of dimension three and embeddable~\cite{Takeuchi2020-Paneitz}*{Theorem 1.1}.
	On the other hand,
	$\GJMS$ may have negative eigenvalues in higher dimensions;
	see \cite{Takeuchi2018}*{Theorem 1.1 and the proof of Theorem 1.6}.
\end{remark}

\section{The CR Paneitz operator in dimension three}
\label{section:CR-Paneitz-operator}

In this section,
we prove \cref{thm:bounded-below-in-dimension-three}.
Let $(M, T^{1, 0} M, \theta)$ be a closed pseudo-Hermitian manifold of dimension three.
We do not assume that $(M, T^{1, 0} M)$ is embeddable.
The author proved that $\GJMS$ is self-adjoint~\cite{Takeuchi2024-preprint}*{Theorem 1.1}.
Let $E$ be the resolution of the identity of $\GJMS$ and fix $\lambda > 0$.
Set
\begin{equation}
	\pi_{\lambda} \coloneqq E(\clcl{- \lambda}{\lambda})
	\colon L^{2}(M) \to \Dom \GJMS.
\end{equation}
This is an orthogonal projection on $L^{2}(M)$.
Moreover,
the operator
\begin{equation}
	N_{\lambda} \coloneqq \int_{\bbR} t^{- 1} \chi_{\clcl{- \lambda}{\lambda}^{c}}(t) \, d E(t)
	\colon L^{2}(M) \to L^{2}(M)
\end{equation}
is a bounded self-adjoint operator on $L^{2}(M)$,
with range contained in $\Dom \GJMS$,
and satisfies
\begin{gather}
	P N_{\lambda} + \pi_{\lambda} = I \text{ on  } L^{2}(M), \\
	N_{\lambda} P + \pi_{\lambda} = I \text{ on  } \Dom \GJMS.
\end{gather}
The author~\cite{Takeuchi2024-preprint}*{Theorem 4.9} also showed that
$N_{\lambda} \in \Hpsido^{- 4}(M)$ and $\pi_{\lambda} \in \Hpsido^{0}(M)$.
Additionally,
$\pi_{\lambda}$ is equal to $S + \ovS$
modulo $\Hpsido^{- 1}(M)$~\cite{Takeuchi2024-preprint}*{Proposition 4.6},
where $S$ is an approximate \Szego projection
constructed by Beals and Greiner~\cite{Beals-Greiner1988}*{Section 25}.

In order to prove \cref{thm:bounded-below-in-dimension-three},
we introduce a formally self-adjoint Heisenberg pseudodifferential operator $\wtP$ of order $4$ given by
\begin{equation}
	\wtP
	= \GJMS + \pi_{\lambda} (\Delta_{b} + 1)^{2} \pi_{\lambda}.
\end{equation}

\begin{lemma}
\label{lem:parametrix-of-modified-Paneitz-op}
	The operator $\wtP$ has a parametrix.
\end{lemma}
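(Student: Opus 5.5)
We follow the argument of \cref{lem:parametrix-of-modified-GJMS-op}, with the partial inverse $G$ and the projection $\Pi$ replaced by $N_{\lambda}$ and $\pi_{\lambda}$. By \cref{prop:equivalent-conditions-for-existence-of-parametrix}, it suffices to find $\wtG_{1} \in \Hpsido^{- 4}(M)$ with $\wtG_{1} \wtP - I$ and $\wtP \wtG_{1} - I$ in $\Hpsido^{- 1}(M)$. We take
\begin{equation}
	\wtG_{1}
	\coloneqq N_{\lambda} + \pi_{\lambda} (\Delta_{b} + 1)^{- 2} \pi_{\lambda},
\end{equation}
which lies in $\Hpsido^{- 4}(M)$ because $N_{\lambda} \in \Hpsido^{- 4}(M)$, $\pi_{\lambda} \in \Hpsido^{0}(M)$, and $(\Delta_{b} + 1)^{- 2} \in \Hpsido^{- 4}(M)$.

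First we compute $\wtG_{1} \wtP$ on $C^{\infty}(M)$. Since $\pi_{\lambda}$ and $N_{\lambda}$ are functions of $\GJMS$, they commute with $\GJMS$ and satisfy $N_{\lambda} \pi_{\lambda} = 0$ and $\pi_{\lambda} N_{\lambda} = 0$. The identity $N_{\lambda} \GJMS = I - \pi_{\lambda}$ on $\Dom \GJMS$ then gives
\begin{equation}
	N_{\lambda} \wtP = I - \pi_{\lambda}.
\end{equation}
Unlike in \cref{lem:parametrix-of-modified-GJMS-op}, $\pi_{\lambda} \GJMS$ need not vanish; instead $\pi_{\lambda} \GJMS = \GJMS \pi_{\lambda}$ is the bounded operator $\int_{[-\lambda, \lambda]} t \, dE(t)$. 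This operator is smoothing, since $\GJMS \pi_{\lambda} = \GJMS (I - \GJMS N_{\lambda}) = \GJMS - \GJMS^{2} N_{\lambda}$ and, as $\pi_{\lambda} \in \Hpsido^{0}(M)$ is a projection, $\pi_{\lambda} = \pi_{\lambda}^{3}$ with $\GJMS\pi_{\lambda} = \pi_\lambda (\GJMS \pi_\lambda) \pi_\lambda$ bounded on every $\HSobolev^{s}$. It is in any case a Heisenberg pseudodifferential operator in $\Hpsido^{0}(M)$. Therefore
\begin{equation}
	\pi_{\lambda} (\Delta_{b} + 1)^{- 2} \pi_{\lambda} \GJMS \in \Hpsido^{- 4}(M) \subset \Hpsido^{- 1}(M),
\end{equation}
and
\begin{equation}
	\wtG_{1} \wtP - I
	\equiv \pi_{\lambda} (\Delta_{b} + 1)^{- 2} \pi_{\lambda} (\Delta_{b} + 1)^{2} \pi_{\lambda} - \pi_{\lambda}
	= \pi_{\lambda} (\Delta_{b} + 1)^{- 2} \comm{\pi_{\lambda}}{(\Delta_{b} + 1)^{2}} \pi_{\lambda}
\end{equation}
modulo $\Hpsido^{- 1}(M)$.

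The key step, and the main obstacle, is to show that $\comm{\pi_{\lambda}}{(\Delta_{b} + 1)^{2}} \in \Hpsido^{3}(M)$. Since $\pi_{\lambda} \equiv S + \ovS$ modulo $\Hpsido^{- 1}(M)$, the commutator agrees with $\comm{S + \ovS}{(\Delta_{b}+1)^{2}}$ modulo $\Hpsido^{3}(M)$. It therefore suffices to show that the principal symbol of $\comm{S}{\Delta_{b}}$ of order $2$ vanishes. Pointwise this reduces, via the local identifications $\sigma_{0}(S) = \sigma_{0}^{0}(S^{0})$ and $\sigma_{2}(\Delta_{b}) = \sigma_{2}^{0}(\Delta_{b}^{0})$ for the Beals--Greiner approximate \Szego projection, to the model identity $\comm{S^{0}}{\Delta_{b}^{0}} = 0$. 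That identity holds because $S^{0}$ is a spectral projection of the joint functional calculus of $\Delta_{b}^{0}$ and $- \sqrt{- 1} T^{0}$ (the rays $\mu = n\lambda$ with $\lambda > 0$), as used in \cref{section:square-root-on-Heisenberg-group}. This is the analogue of \cite{Takeuchi2023-GJMS}*{Lemma 5.1}, and I would cite or reprove it in this form.

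Granting this, $\wtG_{1} \wtP - I \in \Hpsido^{- 1}(M)$. Since $\wtG_{1}$ and $\wtP$ are formally self-adjoint, taking adjoints gives $\wtP \wtG_{1} - I \in \Hpsido^{- 1}(M)$, which completes the argument.
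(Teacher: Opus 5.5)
Your construction matches the paper's: the same $\wtG_{1}=N_{\lambda}+\pi_{\lambda}(\Delta_{b}+1)^{-2}\pi_{\lambda}$, the same expansion of $\wtG_{1}\wtP$, the same commutator bound, and the same adjoint step. Your reduction of $\comm{\pi_{\lambda}}{(\Delta_{b}+1)^{2}}\in\Hpsido^{3}(M)$ to $\comm{S^{0}}{\Delta_{b}^{0}}=0$ via principal symbols is a fine substitute for the citation.

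\textbf{The gap.} The problem is the term $\pi_{\lambda}(\Delta_{b}+1)^{-2}\pi_{\lambda}\GJMS$. A priori it lies only in $\Hpsido^{-4}\cdot\Hpsido^{4}=\Hpsido^{0}(M)$, so you must gain at least one order from $\pi_{\lambda}\GJMS$, and your argument for this does not work.
\begin{itemize}
\item The identity $\GJMS\pi_{\lambda}=\GJMS-\GJMS^{2}N_{\lambda}$ only places $\GJMS\pi_{\lambda}$ in $\Hpsido^{4}(M)$.
\item Writing $\GJMS\pi_{\lambda}=\pi_{\lambda}(\GJMS\pi_{\lambda})\pi_{\lambda}$ gives nothing beyond $L^{2}$-boundedness. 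The middle factor is only known to be bounded on $L^{2}(M)$, and the outer factors $\pi_{\lambda}\in\Hpsido^{0}(M)$ preserve each $\HSobolev^{s}(M)$ without improving regularity. So boundedness on every $\HSobolev^{s}(M)$ is not established.
\item Even granting it, boundedness on each $\HSobolev^{s}(M)$ is not smoothing, which requires mapping $\HSobolev^{-s}(M)$ into $\HSobolev^{s}(M)$ for all $s$.
\item Nor does it place the operator in $\Hpsido^{0}(M)$ without a separate argument that an $L^{2}$-bounded element of $\Hpsido^{4}(M)$ actually has order $0$.
\end{itemize}
So the fallback claim that $\pi_{\lambda}\GJMS$ is ``in any case'' in $\Hpsido^{0}(M)$ is unsupported. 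Your manipulations never use anything that would make it true.

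\textbf{How to close it.} The paper cites \cite{Takeuchi2024-preprint}*{Theorem 4.8}, which gives $\pi_{\lambda}\GJMS=\GJMS\pi_{\lambda}\in\Hpsido^{-\infty}(M)$. This is an input from the analysis there, not something your functional-calculus manipulations yield.

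Alternatively, you can use the same principal-symbol reasoning you already use for the commutator. Since $\pi_{\lambda}\equiv S+\ovS$ modulo $\Hpsido^{-1}(M)$, the local identifications give $\sigma_{4}(\pi_{\lambda}\GJMS)=\sigma_{0}^{0}(\Pi^{0})\ast^{0}\sigma_{4}^{0}(\GJMS^{0})=\sigma_{4}^{0}(\Pi^{0}\GJMS^{0})$. This vanishes because the multiplier of $\GJMS^{0}$ is zero on the lowest spectral rays selected by $\Pi^{0}$. Hence $\pi_{\lambda}\GJMS\in\Hpsido^{3}(M)$, so $\pi_{\lambda}(\Delta_{b}+1)^{-2}\pi_{\lambda}\GJMS\in\Hpsido^{-1}(M)$, which is all the parametrix argument needs.
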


\begin{proof}
	From \cref{prop:equivalent-conditions-for-existence-of-parametrix},
	it is sufficient to find $\wtG_{1} \in \Hpsido^{- 4}(M)$
	such that
	\begin{equation}
		\wtG_{1} \wtP - I, \wtP \wtG_{1} - I \in \Hpsido^{- 1}(M).
	\end{equation}
	Define $\wtG_{1}$ by
	\begin{equation}
		\wtG_{1}
		\coloneqq N_{\lambda} + \pi_{\lambda} (\Delta_{b} + 1)^{- 2} \pi_{\lambda}.
	\end{equation}
	Then
	\begin{equation}
		\wtG_{1} \wtP
		= I - \pi_{\lambda}
		+ \pi_{\lambda} (\Delta_{b} + 1)^{- 2} \pi_{\lambda} (\Delta_{b} + 1)^{2} \pi_{\lambda}
		+ \pi_{\lambda} (\Delta_{b} + 1)^{- 2} \pi_{\lambda} \GJMS.
	\end{equation}
	We derive from $\pi_{\lambda} \equiv S + \ovS$ modulo $\Hpsido^{- 1}(M)$
	and \cite{Takeuchi2024-preprint}*{Lemma 4.2 and Theorem 4.8} that
	\begin{equation}
		\comm{\pi_{\lambda}}{(\Delta_{b} + 1)^{2}} \in \Hpsido^{3}(M),
		\qquad
		\pi_{\lambda} \GJMS = \GJMS \pi_{\lambda} \in \Hpsido^{- \infty}(M).
	\end{equation}
	Hence
	\begin{equation}
		\wtG_{1} \wtP - I
		= \pi_{\lambda} (\Delta_{b} + 1)^{- 2} \comm{\pi_{\lambda}}{(\Delta_{b} + 1)^{2}} \pi_{\lambda}
			+ \pi_{\lambda} (\Delta_{b} + 1)^{- 2} \pi_{\lambda} \GJMS
		\in \Hpsido^{- 1}(M).
	\end{equation}
	Taking the adjoint,
	we also have $\wtP \wtG_{1} - I \in \Hpsido^{- 1}(M)$.
\end{proof}

\begin{proof}[Proof of \cref{thm:bounded-below-in-dimension-three}]
	Take a local frame $Z_{1}$ of $T^{1, 0} M$ on an open set $U \subset M$
	such that $\calL_{\theta}(Z_{1}, Z_{1}) = 2$.
	This gives an isomorphism
	\begin{equation}
		\frakg^{\ast} M |_{U} \to U \times \frakg^{\ast}.
	\end{equation}
	Under this identification,
	it is known that
	\begin{equation}
		\sigma_{2}(L_{\mu})
		= \sigma_{2}^{0}(L_{\mu}^{0}),
		\qquad
		\sigma_{0}(S)
		= \sigma_{0}^{0}(S^{0});
	\end{equation}
	see~\cite{Ponge2008}*{Section 5} and \cite{Beals-Greiner1988}*{Section 25}.
	Thus
	\begin{align}
		\sigma_{4}(\wtP)
		&= \sigma_{2}(L_{- 1}) \ast \sigma_{2}(L_{1})
			+ \sigma_{0}(\pi_{\lambda}) \ast \sigma_{2}(\Delta_{b})^{\ast 2} \ast \sigma_{0}(\pi_{\lambda}) \\
		&= \sigma_{2}^{0}(L_{- 1}^{0}) \ast^{0} \sigma_{2}^{0}(L_{1}^{0})
			+ \sigma_{0}^{0}(\Pi^{0}) \ast^{0} \sigma_{2}^{0}(\Delta_{b}^{0})^{\ast^{0} 2} \ast^{0} \sigma_{0}^{0}(\Pi^{0}) \\
		&= \sigma_{4}^{0}(\wtP^{0}).
	\end{align}
	
	Let $q^{0}$ be the symbol furnished by \cref{thm:square-root-of-critical-CR-GJMS-operator} with $n = 1$.
	Under the above local identifications,
	$q^{0}$ defines local symbols that agree on overlaps,
	since the transition maps are $U(1)$-valued.
	They therefore determine a global real-valued symbol $q \in \Hsymbol^{2}(M)$ satisfying
	\begin{equation}
		\sigma_{4}(\wtP) = q * q.
	\end{equation}
	
	Take $u \in \Dom \GJMS$.
	It follows from $N_{\lambda} \in \Hpsido^{- 4}(M)$ that
	$(I - \pi_{\lambda}) u = N_{\lambda} \GJMS u \in \HSobolev^{4}(M)$.
	Moreover,
	we have $\wtP (I - \pi_{\lambda}) u = \GJMS (I - \pi_{\lambda}) u$.
	By \cref{lem:equivalent-conditions-for-boundedness,lem:parametrix-of-modified-Paneitz-op},
	we have
	\begin{equation}
		(\GJMS (I - \pi_{\lambda}) u, (I - \pi_{\lambda}) u)_{0}
		= (\wtP (I - \pi_{\lambda}) u, (I - \pi_{\lambda}) u)_{0}
		\geq - C \norm{(I - \pi_{\lambda}) u}_{0}^{2}.
	\end{equation}
	On the other hand,
	the definition of $\pi_{\lambda}$ implies
	\begin{equation}
		(\GJMS \pi_{\lambda} u, \pi_{\lambda} u)_{0} \geq - \lambda \norm{\pi_{\lambda} u}_{0}^{2}.
	\end{equation}
	Thus we have
	\begin{align}
		(\GJMS u, u)_{0}
		&= (\GJMS (I - \pi_{\lambda}) u, (I - \pi_{\lambda}) u)_{0}
			+ (\GJMS \pi_{\lambda} u, \pi_{\lambda} u)_{0} \\
		&\geq - C \norm{(I - \pi_{\lambda}) u}_{0}^{2} - \lambda \norm{\pi_{\lambda} u}_{0}^{2} \\
		&\geq - \max\{C, \lambda\} \norm{u}_{0}^{2},
	\end{align}
	which means that $\GJMS$ is bounded below.
\end{proof}

As an application,
we consider the CR Paneitz operator on the Rossi sphere.
The unit sphere
\begin{equation}
	S^{3}
	\coloneqq \Set{(z, w) \in \bbC^{2} | \abs{z}^{2} + \abs{w}^{2} = 1}
\end{equation}
has the canonical CR structure $T^{1, 0} S^{3}$.
This CR structure is spanned by
\begin{equation}
	Z_{1}
	\coloneqq \ovw \frac{\del}{\del z} - \ovz \frac{\del}{\del w}.
\end{equation}
A canonical contact form $\theta$ on $S^{3}$ is given by
\begin{equation}
	\theta
	\coloneqq \frac{\sqrt{- 1}}{2} (z d \ovz + w d \ovw - \ovz d z - \ovw d w)|_{S^{3}}.
\end{equation}
For a real number $t$ satisfying $0 < \abs{t} < 1$,
the \emph{Rossi sphere} $(S^{3}_{t}, T^{1, 0} S^{3}_{t})$ is defined by
\begin{equation}
	(S^{3}_{t}, T^{1, 0} S^{3}_{t})
	\coloneqq (S^{3}, \bbC (Z_{1} + t Z_{\overline{1}})).
\end{equation}
Denote by $\GJMS(t)$ the CR Paneitz operator with respect to $(S^{3}_{t}, T^{1,0}S^{3}_{t}, \theta)$.
The author~\cite{Takeuchi2024-preprint}*{Theorem 1.3} proved that
$\GJMS(t)$ has infinitely many negative eigenvalues.
As an application of \cref{thm:bounded-below-in-dimension-three},
we determine the accumulation behavior of these negative eigenvalues.

\begin{theorem}
\label{thm:negative-eigenvalues-on-Rossi-sphere}
	The negative spectrum of $\GJMS(t)$ consists of eigenvalues of finite multiplicity
	and has $0$ as its only accumulation point.
	In particular,
	$\GJMS(t)$ does not have closed range in $L^{2}(S^{3})$.
\end{theorem}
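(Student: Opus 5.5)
The plan is to combine the spectral information on $\GJMS(t)$ established in \cite{Takeuchi2024-preprint} with \cref{thm:bounded-below-in-dimension-three}. From the introduction, the results of \cite{Takeuchi2024-preprint} give the following for the Rossi sphere:
\begin{itemize}
	\item $\GJMS(t)$ is self-adjoint;
	\item $\GJMS(t)$ has infinitely many distinct negative eigenvalues;
	\item the spectrum of $\GJMS(t)$ in $\bbR \setminus \{0\}$ near any negative point is discrete, consisting of eigenvalues of finite multiplicity, with no accumulation away from zero.
\end{itemize}
I would make the last item precise through the operators $\pi_{\lambda}$ and $N_{\lambda}$ of \cref{section:CR-Paneitz-operator}. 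For each $\lambda > 0$, the spectral projection $E(\opcl{-\infty}{-\lambda})$ is the restriction of $I - \pi_{\lambda}$ to the negative part. On its range, $u = N_{\lambda} \GJMS u$ with $N_{\lambda} \in \Hpsido^{-4}(M)$.

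The key step is to show that $E(\clop{-\Lambda}{-\lambda})$ has finite rank for every $0 < \lambda < \Lambda$. On this range $\GJMS$ is bounded, with $\norm{\GJMS u}_{0} \leq \Lambda \norm{u}_{0}$. Hence $u = N_{\lambda} \GJMS u \in \HSobolev^{4}(M)$ with $\norm{u}_{4} \leq C \norm{u}_{0}$. By \cref{lem:Rellich's-lemma}, the unit ball of this closed subspace of $L^{2}$ is compact, so the subspace is finite-dimensional. This is the same argument as in \cref{cor:finite-negative-eigenvalues}.

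By \cref{thm:bounded-below-in-dimension-three}, there is $c > 0$ with $\sigma(\GJMS(t)) \subset \clop{-c}{\infty}$. Therefore $E(\opop{-\infty}{-\lambda}) = E(\clop{-c}{-\lambda})$, which is finite rank for every $\lambda > 0$. It follows that the negative spectrum is a set of eigenvalues of finite multiplicity, and that only finitely many of them lie below $-\lambda$ for each $\lambda > 0$. Since there are infinitely many negative eigenvalues, they must accumulate, and by the previous sentence the only possible accumulation point in $\clcl{-c}{0}$ is $0$. So $0$ is the unique accumulation point.

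For the failure of closed range, suppose the range of the self-adjoint operator $\GJMS(t)$ were closed. Then $0$ would be isolated in the spectrum, or not in it at all; equivalently, there would be $\delta > 0$ with $\sigma(\GJMS(t)) \cap \opop{-\delta}{\delta} \subset \{0\}$. This contradicts the accumulation of negative eigenvalues at $0$.

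The main obstacle is justifying the cited input from \cite{Takeuchi2024-preprint}, namely that there are infinitely many negative eigenvalues and that $N_{\lambda} \in \Hpsido^{-4}$, on which the finite-rank step rests. I take both as given, as the paper does. The only genuinely new ingredient is the lower bound, which rules out escape to $-\infty$.
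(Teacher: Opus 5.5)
Your proof is correct. Its structure matches the paper's: the lower bound from \cref{thm:bounded-below-in-dimension-three}, combined with the spectral input from \cite{Takeuchi2024-preprint}, followed by the observation that nonzero spectrum accumulating at $0$ is incompatible with closed range.

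The one genuine difference is where the discreteness of the negative spectrum away from $0$ comes from.
\begin{itemize}
\item The paper quotes Theorems 1.2 and 1.3 of \cite{Takeuchi2024-preprint}: infinitely many distinct negative eigenvalues, each of finite multiplicity, with no accumulation point in $(-\infty,0)$.
\item You rederive the finite-multiplicity and no-accumulation part from $N_{\lambda}\in\Hpsido^{-4}(M)$ and \cref{lem:Rellich's-lemma}, exactly as in \cref{cor:finite-negative-eigenvalues}. You also fold the lower bound in at the same step, which gives the slightly stronger statement that $E((-\infty,-\lambda))$ has finite rank for every $\lambda>0$.
\end{itemize}
Your version is more self-contained. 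It uses only the mapping property of $N_{\lambda}$, which the paper already needs in the proof of \cref{thm:bounded-below-in-dimension-three}, and it still takes the existence of infinitely many negative eigenvalues from Theorem 1.3. The paper's version is shorter because Theorem 1.2 of the preprint already packages the discreteness statement.

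Your closed-range step uses the spectral characterization: $0$ would not be an accumulation point of $\sigma(\GJMS(t))$. This is equivalent to the paper's argument, which tests the estimate $\|\GJMS(t) v\|_{0}\geq c\|v\|_{0}$ on $(\Ker\GJMS(t))^{\perp}$ against normalized eigenfunctions.

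There is one small slip. The negative part of $I-\pi_{\lambda}$ is $E((-\infty,-\lambda))$, not $E((-\infty,-\lambda])$, because $-\lambda\in[-\lambda,\lambda]$. This is harmless, since your key step uses the half-open interval $[-\Lambda,-\lambda)$, which is disjoint from $[-\lambda,\lambda]$.
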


\begin{proof}
	By \cite{Takeuchi2024-preprint}*{Theorems 1.2 and 1.3},
	$\GJMS(t)$ has infinitely many distinct negative eigenvalues,
	each of finite multiplicity,
	and these eigenvalues have no accumulation point in $\opop{- \infty}{0}$.
	Since the spectrum of $\GJMS(t)$ is bounded below by \cref{thm:bounded-below-in-dimension-three},
	its negative spectrum must have $0$ as its only accumulation point.
	Choose negative eigenvalues $\lambda_{j} \to 0$
	and corresponding eigenfunctions $u_{j}$ with $\norm{u_{j}}_{0} = 1$.
	Since $\GJMS(t)$ is self-adjoint and $\lambda_{j} \neq 0$,
	we have $u_{j} \perp \Ker \GJMS(t)$.
	If $\GJMS(t)$ had closed range,
	there would exist a constant $c > 0$ such that
	\begin{equation}
		\norm{\GJMS(t) v}_{0} \geq c \norm{v}_{0}
	\end{equation}
	for every $v \in \Dom \GJMS(t) \cap (\Ker \GJMS(t))^{\perp}$.
	This contradicts
	\begin{equation}
		\norm{\GJMS(t) u_{j}}_{0} = \abs{\lambda_{j}} \longrightarrow 0.
	\end{equation}
	Thus $\GJMS(t)$ does not have closed range.
\end{proof}

\begin{remark}
\label{rem:infinitely-many-negative-eigenvalues}
	More generally,
	let $\GJMS$ be the CR Paneitz operator on a closed three-dimensional pseudo-Hermitian manifold.
	If $\GJMS$ has infinitely many negative eigenvalues,
	then \cref{thm:bounded-below-in-dimension-three}
	and \cite{Takeuchi2024-preprint}*{Theorem 1.2} imply that
	zero is their only accumulation point.
	The same argument shows that $\GJMS$ does not have closed range.
	In particular,
	this conclusion also applies to the non-embeddable three-dimensional tori
	with infinitely many negative eigenvalues constructed in \cite{Ho-Takeuchi2026-preprint}.
\end{remark}

\section*{Declaration of generative AI use}

The author used ChatGPT (OpenAI) to assist with English-language editing,
drafting and revising the abstract and introduction,
identifying relevant references,
and reviewing and refining mathematical arguments.
The author critically evaluated and revised all AI-assisted material incorporated into the manuscript,
independently verified the mathematical statements, proofs, and cited sources,
and takes full responsibility for the accuracy and integrity of the final manuscript.

\bibliography{my-reference,my-reference-preprint}

\end{document}